\newcommand\B{\mathbb B}
\newcommand\N{\mathbb N}
\newcommand\R{\mathbb R}
\newcommand\PP{\mathbb P}
\newcommand\E{\mathbb E}
\newcommand\cA{\mathcal A}
\newcommand\cN{\mathcal N}
\newcommand\cP{\mathcal P}
\newcommand\cQ{\mathcal Q}
\let\oldnl\nl
\newcommand{\nonl}{\renewcommand{\nl}{\let\nl\oldnl}}
\newcommand\Ind{\boldsymbol 1}
\newcommand\Lip{\mathrm{Lip}}
\DeclarePairedDelimiter{\floor}{\lfloor}{\rfloor}
\newcommand{\PR}[1]{\PP\left(#1\right)}
\newcommand{\EX}[1]{\E\left[#1\right]}
\newcommand{\RomanNumeralCaps}[1]
    {\MakeUppercase{\romannumeral #1}}
\newcommand\Sym{\mathtt{Sym}}
\theoremstyle{plain}
\newtheorem{theorem}{Theorem}[section]
\newtheorem*{theorem*}{Theorem}
\newtheorem{lemma}[theorem]{Lemma}
\newtheorem*{lemma*}{Lemma}
\newtheorem{corollary}[theorem]{Corollary}
\newtheorem*{corollary*}{Corollary}
\newtheorem{proposition}[theorem]{Proposition}
\newtheorem*{proposition*}{Proposition}
\newtheorem*{fact*}{Fact}
\theoremstyle{definition}
\newtheorem*{definition*}{Definition}
\newtheorem*{problem*}{Problem}
\newtheorem*{example*}{Example}
\theoremstyle{remark}
\newtheorem{remark}[theorem]{Remark}
\newtheorem*{remark*}{Remark}
\title{Hessian descent for spherical spin glasses with uniform log-Sobolev disorder}
\author{
    Fu-Hsuan Ho\thanks{Department of Mathematics, Weizmann Institute of Science, 76100 Rehovot, Israel}
    }
\date{\today}
\begin{document}

\maketitle

\begin{abstract}
    The present work concerns spherical spin glass models with disorder satisfying a uniform logarithmic Sobolev inequality. We show that the Hessian descent algorithm introduced in~\cite{following21} can be extended to this setting, thanks to the abundance of small eigenvalues near the edge of the Hessian spectrum. Combined with the ground state universality recently proven in~\cite{sawhney_free_2024}, this implies that when the model is in the full-RSB phase, the Hessian descent algorithm can find a near-minimum with high probability.
    
    Our proof consists of two main ingredients. First, we show that the empirical spectral distribution of the Hessian converges to a semicircular law via the moment method. Second, we use the logarithmic Sobolev inequality to establish concentration and obtain uniform control of the spectral edge.
\end{abstract}

\section{Introduction} \label{sec:intro}

Spherical spin glasses are random polynomial functions---called Hamiltonians---defined on high-dimensional spheres, where the coefficients are typically taken to be i.i.d. standard Gaussian variables. These models have served as a natural testing ground for studying optimization problems involving high-dimensional, non-convex objective functions. In a seminal work \cite{following21}, Subag introduced the Hessian descent algorithm, which proceeds as follows: starting from the origin of the Euclidean ball, the algorithm follows, at each step, the direction corresponding to the most negative eigenvalue of the Hamiltonian's Hessian matrix. Once the path reaches the boundary sphere, the algorithm outputs the final point. When the model satisfies the so-called full-RSB condition, Subag showed that, with high probability, this procedure finds a point on the sphere whose Hamiltonian value is near the global minimum.

The goal of the present work is to generalize Subag's result to spherical spin glasses with non-Gaussian disorder, specifically those whose coefficients satisfy a uniform logarithmic Sobolev inequality. Since the algorithm descends along directions associated with the left edge of the spherical Hessian spectrum, it is crucial to ensure that the spectrum indeed has many small (negative) eigenvalues throughout the descent. Our main result, Theorem~\ref{thm:edge}, shows that this spectral edge behavior persists with high probability under the log-Sobolev condition. On the other hand, a recent result of Sawhney and Sellke \cite{sawhney_free_2024} establishes the universality of the ground state energy, demonstrating that, under mild moment assumptions, the minimum value of the Hamiltonian is asymptotically the same as in the Gaussian case. Putting these pieces together, we conclude that for full-RSB models with log-Sobolev disorder, the Hessian descent algorithm still finds near-optimal configurations with high probability---just as in the Gaussian case.

\subsection{Definitions and result} \label{sec:def and result}

Let $\N$ be the set of integers, and $\llbracket a,b\rrbracket$ be the set of integers between $a$ and $b$. For $N\in\N$, let the collection $J=(J^{(p)}_{i_1,\ldots,i_p})_{p\geq 2,i_1,\ldots,i_p\in\llbracket 1,N\rrbracket}$ consist of independent random variables with mean $0$ and variance $1$. 
Moreover, we assume the following uniform logarithmic Sobolev condition throughout this paper:
\begin{enumerate}
    \item[\hypertarget{ass:LS}{(LS)}] \emph{Uniform logarithmic Sobolev bound}.
    There exists a constant $K>0$ such that for all $p\geq 2$, $i_1,\ldots,i_p\in\llbracket 1, N\rrbracket$ and differentiable function $f:\R\rightarrow\R$ with $\int_\R f(x)^2\dd{x}=1$,
    \begin{align}
	\EX{f(J^{(p)}_{i_1,\ldots,i_p})^2\log(f(J^{(p)}_{i_1,\ldots,i_p})^2)} \leq 2K \EX{f'(J^{(p)}_{i_1,\ldots,i_p})^2}.
        \label{eq:LSI d=1}
    \end{align}
\end{enumerate}
Define the $(N-1)$-sphere and the $N$-ball by
\begin{align*}
{\bm{S}}^{N-1} = \{\bm{x}\in\R^N:\norm{\bm{x}}=\sqrt{N}\},
\quad\quad
\bm{B}^N = \{\bm{x}\in\R^N:\norm{\bm{x}}\leq \sqrt{N}\},
\end{align*}
where $\norm{\cdot}$ is the Euclidean norm on $\R^N$.
For a sequence of real numbers $\gamma = (\gamma_p)_{p=2}^\infty$, the spherical spin glass Hamiltonian
on $\bm{B}^{N}$ is the random function\footnote{
We extend the usual domain $\bm{S}^{N-1}$ to the $\bm{B}^N$ because our goal is to generalize the gradient descent algorithm in \cite{following21}. This algorithm constructs a path from the origin of $\bm{B}^N$ to some point $\bm{x}\in \bm{S}^{N-1}$, which requires information in the whole ball $\bm{B}^N$.
}
$H_N:\bm{B}^{N}\rightarrow\R$ 
\begin{align}
    H_N(\bm{x})
    =
    \sum_{p=2}^\infty 
	\frac{\gamma_p}{N^{(p-1)/2}}\sum_{i_1,\ldots,i_p=1}^N J_{i_1,\ldots,i_p}^{(p)} x_{i_1}\cdots x_{i_p}
	=
	\sum_{p=2}^\infty 
    \gamma_p
    H_{N}^{(p)}(\bm{x}).
    \nonumber
\end{align}
When required, we write $H_N^{(p)}(\bm{x};J^{(p)})$ to show dependence of $H_N^{(p)}$ on $J^{(p)}$. Note that $H_N(\bm{x})$ has  covariance
\begin{align}
\EX{H_N(\bm{x})H_N(\bm{x'})} = N\xi\left(\frac{\langle\bm{x},\bm{x'}\rangle}{N}\right),  
\quad 
\xi(z) = \sum_{p=2}^\infty \gamma_p^2 z^p,
\label{eq:covariance}
\end{align}
where $\langle\cdot,\cdot\rangle$ is the inner product on $\R^N$. 
We assume that the power series $\xi$ has the radius of convergence greater than $1$.

Given $\bm{x}\in\bm{B}^{N}$, the Euclidean Hessian matrix of $H_N(\bm{x})$ is defined as 
\begin{align}
    \nabla_E^2 H_{N}(\bm{x})
    \coloneqq
    \sum_{p=2}^\infty 
    \gamma_p
    \nabla_E^2 H_{N}^{(p)}(\bm{x}), \nonumber
\end{align}
where $\nabla_E^2 H_{N}^{(p)}(\bm{x})$ is the Euclidean Hessian matrix of $H_{N}^{(p)}(\bm{x})$ defined entrywise as 
\begin{align}
    \nabla_E^2 H_{N}^{(p)}(\bm{x})_{jk} 
    = 
    \frac{1}{N^{(p-1)/2}}
    \sum_{i_1,\ldots,i_p=1}^N J_{i_1,\ldots,i_p} 
    \partial_{j}\partial_{k} 
    (x_{i_1}\cdots x_{i_p}), 
    \quad 
    j,k\in\llbracket 1,N\rrbracket.
    \nonumber
\end{align}
The Hessian matrix of the Hamiltonian $H_N(\bm{x})$ on the ball $\bm{B}^{N}$ at $\bm{x}$ is defined as
\begin{align}
    \nabla^2 H_{N}(\bm{x}) 
    =
    P_{\bm{x}}^\intercal\nabla_E^2 H_{N}(\bm{x}) P_{\bm{x}}
    =
    \sum_{p=2}^\infty \gamma_p P_{\bm{x}}^\intercal\nabla_E^2 H_{N}^{(p)}(\bm{x})P_{\bm{x}}
    =
    \sum_{p=2}^\infty \gamma_p \nabla^2 H_{N}^{(p)}(\bm{x}), \label{eq:projected Hessian}
\end{align}
where $P_{\bm{x}} = P_{\bm{x}}^\intercal = I_N - \frac{\bm{x} \bm{x}^\intercal}{\norm{\bm{x}}^2}$
is the orthogonal projection matrix onto the orthogonal space of $\bm{x}$.
We denote by $\lambda_{N,\bm{x}}^{(1)},\ldots,\lambda_{N,\bm{x}}^{(N)}$ the eigenvalues of $\nabla^2 H_N(\bm{x})$, and we denote by $\mu_{N,\bm{x}} = \frac{1}{N}\sum_{i=1}^N \delta_{\lambda_{N,\bm{x}}^{(i)}}$ the empirical spectral distribution of $\nabla^2 H_N(\bm{x})$, and by $\bar{\mu}_{N,\bm{x}}$ the mean empirical spectral distribution of $\nabla^2 H_N(\bm{x})$.

The main result of this paper is the following uniform control on the edge.
\begin{theorem}[Uniform control on the edge]
    \label{thm:edge}
    Suppose Condition~\hyperlink{ass:LS}{(LS)} holds.
    Then, for all $\varepsilon\in (0,1)$, there exist $\delta>0$ and $c>0$ such that
    \begin{align*}
        \PP
        \Big(
        \forall \bm{x}\in\bm{B}^N : 
        \mu_{N,\bm{x}}((-\infty,-2\xi''(\rho_{\bm{x}})^{1/2}+\varepsilon])\geq \delta 
        \Big) \geq 1-e^{-cN},
    \end{align*}
    where $\xi''(z)=\sum_{p=2}^\infty \gamma_p^2p(p-1)z^{p-2}$ and $\rho_{\bm{x}}=\norm{\bm{x}}^2/N$.
\end{theorem}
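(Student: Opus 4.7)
The plan is to combine a moment-method identification of the limiting Hessian spectrum, log-Sobolev concentration, and a net argument over $\bm{B}^N$. Since $\xi$ has radius of convergence greater than $1$ and $\rho_{\bm{x}}\leq 1$, I would begin by truncating the sum over $p$ at some large $P$, showing that the tail $\sum_{p>P}\gamma_p \nabla^2 H_N^{(p)}(\bm{x})$ contributes a uniformly negligible operator-norm error; it then suffices to work with the finite mixture.

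For a fixed $\bm{x}\in\bm{B}^N$, I would first show via the moment method that $\bar{\mu}_{N,\bm{x}}$ converges weakly to the semicircle law of radius $2\xi''(\rho_{\bm{x}})^{1/2}$. Computing $\frac{1}{N}\E[\mathrm{Tr}((\nabla^2 H_N(\bm{x}))^k)]$ reduces in the large-$N$ limit to a sum over non-crossing pair-partitions, which depends only on the first two moments of the $J$'s (this is where universality enters: higher cumulants of the disorder do not contribute at leading order). Cross-$p$ pairings vanish because the $J^{(p)}$ are independent across $p$, and the per-$p$ variance contribution $\gamma_p^2 p(p-1)\rho_{\bm{x}}^{p-2}$ sums to $\xi''(\rho_{\bm{x}})$. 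The rank-one correction coming from the projection $P_{\bm{x}}$ does not affect the limit.

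Next, the uniform log-Sobolev hypothesis yields concentration for any $1$-Lipschitz linear spectral statistic. Explicitly, the map $J\mapsto\frac{1}{N}\sum_i \phi(\lambda_{N,\bm{x}}^{(i)})$ is $O(N^{-1/2})$-Lipschitz in the Euclidean metric on the relevant coordinates of $J$, since $\nabla^2 H_N(\bm{x})$ is linear in $J$ and its Hilbert--Schmidt norm is controlled via the tensor structure and $\rho_{\bm{x}}\leq 1$. Herbst's argument then yields a sub-Gaussian deviation bound of the form $\exp(-cNt^2)$ around the mean. Taking $\phi$ in a smooth family $(\phi_\rho)_{\rho\in[0,1]}$ of nonnegative $1$-Lipschitz functions supported in $(-\infty, -2\xi''(\rho)^{1/2}+\varepsilon/2]$ with $\int\phi_\rho\,\dd\sigma_\rho\geq 2\delta$, where $\sigma_\rho$ is the semicircle of radius $2\xi''(\rho)^{1/2}$, the two preceding steps combine to give $\int\phi_{\rho_{\bm{x}}}\,\dd\mu_{N,\bm{x}}\geq \delta$ with probability $1-e^{-cN}$ pointwise in $\bm{x}$.

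Finally, to uniformize over $\bm{B}^N$, I would union-bound the preceding estimate over an $\eta$-net of cardinality $e^{CN|\log\eta|}$; for a fixed small $\eta$, the concentration rate $e^{-cN}$ beats the net cardinality. Interpolation between net points relies on a probabilistic operator-norm regularity of the form $\|\nabla^2 H_N(\bm{x})-\nabla^2 H_N(\bm{y})\|_\Op\leq L\|\bm{x}-\bm{y}\|/\sqrt{N}$, again obtained from log-Sobolev concentration of tensor operator norms after truncation. I expect the main obstacle to be ensuring that this $L$ is stochastically $O(1)$ at all orders $p\leq P$ simultaneously, while also arranging the target radius $2\xi''(\rho)^{1/2}$ to vary Lipschitz-continuously in $\rho$, so that a single $\eta$ independent of $N$ works. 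Once these quantitative inputs are in place, a shift by $\eta$ moves the spectrum by $O(L\eta)$ in Hausdorff distance, which can be absorbed into the $\varepsilon$-slack, yielding Theorem~\ref{thm:edge}.
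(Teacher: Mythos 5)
Your proposal matches the paper's proof in every essential step: truncate the mixture so the tail has uniformly small operator norm, show via the moment method that $\bar{\mu}_{N,p,\bm{x}}$ converges to the semicircle law of radius $2\xi_p''(\rho_{\bm{x}})^{1/2}$ (where only the first two moments of $J$ enter), use the log-Sobolev hypothesis to get $e^{-cN}$ concentration of $\int \phi\,\dd\mu_{N,\bm{x}}$ around its mean via an $O(N^{-1/2})$ Lipschitz constant in $J$, sandwich the edge indicator by a Lipschitz test function, and union-bound over a net whose mesh is fixed using a probabilistic Lipschitz bound $\|\nabla^2 H_N(\bm{x})-\nabla^2 H_N(\bm{y})\|_{\Op}\leq R\|\bm{x}-\bm{y}\|/\sqrt{N}$. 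The only (immaterial) deviations are that you invoke the Hilbert--Schmidt norm and Hoffman--Wielandt where the paper uses operator norm and Weyl's inequality, and you propose to establish the regularity-in-$\bm{x}$ only after truncation while the paper proves it directly for the full mixture via third directional derivatives; both give the needed quantitative inputs.
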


\subsection{Implications for the Hessian descent algorithm}

As mentioned in the beginning of the introduction, for Gaussian Hamiltonians, i.e., the $J$'s are i.i.d. standard Gaussian random variables, Subag introduced (See Theorem 1.4 in \cite{following21}) a Hessian descent algorithm that outputs with high probability a point $\bm{x}$ such that $\frac{1}{N}H_N(\bm{x})\approx E_{\nabla^2}(\xi)$, where
\begin{align*}
    E_{\nabla^2}(\xi) \coloneqq \int_0^1 \xi''(t)^{1/2} \dd{t}.
\end{align*}
For models satisfying the condition\footnote{This condition is equivalent to saying that the Hamiltonian is of full-RSB by the work of Chen and Panchenko \cite{chen_temperature_2017}, Jagannath and Tobasco \cite{jagannath_bounds_2018} and Talagrand \cite{talagrand_free_2006}.} 
that $\xi''(q)^{-1/2}$ is concave on $(0,1]$, Chen and Sen showed in \cite{chen_parisi_2017} that $E_{\nabla^2}$ coincides with the ground state energy $E_\star$. That is,
\begin{align*}
    E_{\nabla^2}(\xi) 
    = \lim_{N\rightarrow\infty} \frac{1}{N}\mathbb{E}
    \min_{\bm{x}\in\bm{S}^{N-1}}
    H_N(\bm{x})
    \stackrel{\mathrm{a.s.}}{=} \lim_{N\rightarrow\infty} \frac{1}{N}
    \min_{\bm{x}\in\bm{S}^{N-1}}
    H_N(\bm{x}) \eqqcolon E_\star(\xi).
\end{align*}
Therefore, the Hessian descent can find a point $\bm{x}$ such that $\frac{1}{N}H_N(\bm{x})\approx E_\star(\xi)$.

Back to our setting.  For the Hessian descent to output with high probability a point $\bm{x}$ such that $\frac{1}{N}H_N(\bm{x})\approx E_{\nabla^2}$, 
one only needs to replace (3.12) in the proof of Theorem~1.4 in \cite{following21} with Theorem 1.1.
Moreover, in a recent work of Sawhney and Sellke \cite{sawhney_free_2024}, they showed that (Theorem 1.5 in~\cite{sawhney_free_2024}) under finite $2p$-moment conditions on the $p$-th disorders (which is much weaker than Condition~\hyperlink{cond:LS}{(LS)}), the ground state converges to the one of Gaussian Hamiltonians. In particular, for models having $\xi''(q)^{-1/2}$ concave on $q\in (0,1]$, the gradient descent can find a point $\bm{x}$ with $\frac{1}{N}H_N(\bm{x})\approx E_{\nabla^2}(\xi) = E_\star(\xi)$, as in the Gaussian setting. 

\subsection{Proof strategy} \label{sec:outline}

We follow the same strategy as in \cite{following21} to prove Theorem~\ref{thm:edge}. First, we discretize $\bm{B}^N$ into an exponential number of points. Then, for each $\bm{x}$ belonging to the discretization, we prove that the edge of the empirical spectral distribution of $\nabla H_N(\bm{x})$ satisfies the desired property with high probability, and then estimate the probability in Theorem~\ref{thm:edge} via the union bound. 

Before describing how we adapt the strategy above to our setting, we first note that Condition~\hyperlink{ass:LS}{(LS)} has the following standard implications, with the same constant $K$. 
\begin{enumerate}
    \item[\hypertarget{ass:SG}{(SG)}] \emph{Sub-Gaussianity}. 
    For all $s\in\R$, $p\geq 2$ and $N\in\N$,
    \begin{align}
        \EX{\exp(s J_{i_1,\ldots,i_p}^{(p)})} \leq 
        \exp(\frac{K s^2}{2}). \label{eq:MGF bnd}
    \end{align}
    \item[\hypertarget{ass:M}{(M)}] \emph{Existence of moments}.
    For all $p\geq 2$ and $k\in\N$, there exists a constant $C(p,k,K)>0$ such that 
    \begin{align*}
     \sup_{r\in\llbracket 2,p\rrbracket}\sup_{i_1,\ldots,i_r\in\llbracket 1, N\rrbracket} \EX{\abs*{J_{i_1,\ldots,i_r}^{(r)}}^k} \leq C(p,k,K) <\infty.
    \end{align*}
\end{enumerate}
Condition~\hyperlink{ass:M}{(M)} follows directly from Condition~\hyperlink{ass:SG}{(SG)}. 
Condition~\hyperlink{ass:SG}{(SG)} can be derived as follows. Condition~\hyperlink{ass:LS}{(LS)} and the Herbst argument (see the last paragraph in Page 94 of \cite{ledoux_concentration_2005}) yield for any Lipschitz function $f:\R\rightarrow\R$ with Lipschitz constant $\abs*{f}_\Lip$, 
     \begin{align*}
         \log \EX{\exp(sf(J^{(p)}_{i_1,\ldots,i_p}))}
         \leq 
         \frac{Ks^2\abs*{f}_\Lip^2}{2} 
         + s \EX{ f(J^{(p)}_{i_1,\ldots,i_p})}.
     \end{align*}
     The bound \eqref{eq:MGF bnd} then follows from taking $f(x)=x$ in the previous inequality.

We explain how Condition~\hyperlink{ass:LS}{(LS)}, Condition~\hyperlink{ass:SG}{(SG)} and Condition~\hyperlink{ass:M}{(M)} come into play to adapt the precedent work \cite{following21} to the present setting.
\begin{enumerate}
    \item In the Gaussian setting as \cite{following21}, one can show that $\mu_{N,\bm{x}}$ is a semicircular law plus one extra atom at $0$. This is because the law of the Hamiltonian is isotropic, and the proof goes by computing the Hessian for the north pole. This argument does not work in our setting. However, since we only need to control the edge of the spectrum, it suffices to establish: 1) the mean empirical spectral distribution $\bar{\mu}_{N,\bm{x}}$ of $\nabla H_N(\bm{x})$ converges weakly to the semicircular law and 2) for a given test function $f$, $\int f\dd{\mu_{N,\bm{x}}}$ concentrates to $\int f\dd{\bar{\mu}_{N,\bm{x}}}$. The first point is proven using the moment method, which relies only on Condition~\hyperlink{ass
}{(M)}. In contrast, the second point requires the full strength of Condition~\hyperlink{ass
}{(LS)} to achieve exponential concentration.
    \item The proof of Lemma~3.1 of \cite{following21} (where the proof is referred to as the one of Lemma C.1 of \cite{arousGeometryTemperatureChaos2020}) requires Dudley's entropy bound (see Corollary 5.25~in \cite{van_handel_probability_2014}) and the Borell--TIS inequality to control the supremum of $H^{(p)}_N(\bm{x})$. Dudley's entropy bound remains valid under Condition~\hyperlink{ass:SG}{(SG)}, while Condition~\hyperlink{ass:LS}{(LS)} is required to replace the Borell--TIS inequality by the concentration inequality for measure satisfying the logarithmic Sobolev inequality (see Proposition~2.3 of \cite{ledoux_concentration_1999}).
\end{enumerate}

\paragraph{Outline.} In Section~\ref{sec:Lip continuity}, we provide the regularity estimates for the Hamiltonian $H_N(\bm{x})$. In Section~\ref{sec:truncated model} and Section~\ref{sec:proof of T upp bnd}, we prove that the mean empirical spectral distribution of a truncated Hamiltonian (i.e., we take the partial sum in \eqref{eq:projected Hessian} up to some $p_1$) converges to a semicircular law. In Section~\ref{sec:concentration}, we provide a uniform control of the edge of the Hessian spectrum. Finally, we combine everything from before in Section~\ref{sec:main proof} to prove Theorem~\ref{thm:edge}.

\subsection{Notation} \label{sec:notation}
For a real number $r\geq 1$, the $r$-norm of $\bm{x}\in\R^N$ is defined by $\norm{\bm{x}}_r=(\sum_{i=1}^N\abs{x_i}^r)^{1/r}$, and we adopt the abbreviation $\norm{\bm{x}}=\norm{\bm{x}}_2$ for the Euclidean norm. 
We will make use of the fact that $\norm{\bm{x}}_r\leq \norm{\bm{x}}$ for all $r\geq 2$.

For a Lipschitz function $f:\R\rightarrow\R$, we denote by $\abs*{f}_\Lip$ the Lipschitz constant of $f$, and we define $\Lip_1(\R)$ to be the set of bounded Lipschitz functions $f:\R\rightarrow\R$ with $\abs*{f}_\Lip=1$. For probability measures $\mu_1,\mu_2$ on $\R$, we  
\begin{align*}
    \norm*{\mu_1-\mu_2}_{LB} 
    = \sup_{f\in\Lip_1(\R)}
    \abs{
    \int_{\R} f(y)\dd{\mu_1(y)}
    -
    \int_{\R} f(y)\dd{\mu_2(y)}
    }.
\end{align*}
Finally, for $\rho>0$, we denote by $\mu_{sc,\rho}$ the semicircular law with the radius parameter $\rho$, which is defined by the density
\begin{align*}
    \dd\mu_{sc,\rho}(y) = \frac{2}{\pi \rho^2}\sqrt{\rho^2 - y}\Ind_{\abs{y}\leq \rho}\dd{y}.
\end{align*}

\section{Regularity of the Hamiltonian} \label{sec:Lip continuity}


The goal of this section is to provide regularity estimates of the Hamiltonian $H_N(\bm x)$. We start with estimates for the pure $p$-spin models.
\begin{proposition}
\label{prop:Lipschitz} 
    Let $K>0$ be as in Condition~\hyperlink{ass:LS}{(LS)} and $C>0$ be an absolute. 
    Then, the followings hold. 
    \begin{itemize}
    \item For all $p\geq 2$ and $N\in\N$, 
    \begin{align}
        \PR{\sup_{\bm{x}\in\bm{B}^{N}}
    \norm*{\nabla^2_E H_{N}^{(p)}(\bm{x})}_{op}
    \geq C\sqrt{K}p^{3/2}(p-1)
    }
    \leq \exp(-\frac{C^2p^3(p-1)^2}{2}N).
        \label{eq:p Hessian Op}
    \end{align}
    \item For all $i=1,2,3$, $p\geq \max\{2,i\}$ and $N\in\N$,
    \begin{align}
        &
        \PR{
        \exists \bm{x}\in\bm{B}^N,
        \exists \bm{v}\in\R^N
        \ \text{with} \ \norm{\bm{v}}=1:
        \abs*{\partial_{\bm{v}}^i H_{N}^{(p)}(\bm{x})} 
        \geq \frac{2C\sqrt{K p}}{N^{(i-2)/2}}\frac{p!}{(p-i)!}
        } \nonumber \\
        &
        \leq \exp(-\frac{C^2p}{2}N),
        \label{eq:p directional}
    \end{align}
     where $\partial^i_{\bm{v}}$ is the $i$-th order directional derivative in direction $\bm{v}\in\R^N$ defined by
    \begin{align}
        \partial_{\bm{v}}^i H_{N}^{(p)}(\bm{x})
        =
        \frac{1}{N^{(p-1)/2}}
        \sum_{j_1,\ldots,j_i=1}^N 
        v_{j_1}\cdots v_{j_i}
        \sum_{i_1,\ldots,i_p=1}^N 
        J^{(p)}_{i_1,\ldots,i_p}
        \partial_{j_1}\cdots\partial_{j_i}(x_{i_1}\cdots x_{i_p}).
        \label{eq:directional der}
    \end{align}
    \item For all $p\geq 3$ and $N\in\N$,
    \begin{align}
        &
        \PR{
        \exists \bm{x},\bm{y}\in\bm{B}^N :
        \norm{\nabla^2_E H_{N}(\bm{x})-\nabla^2_E H_{N}(\bm{y})}_{op} 
        \geq \frac{2C\sqrt{K p}}{\sqrt{N}}\frac{p!}{(p-3)!}\norm{\bm{x}-\bm{y}}
        } \nonumber \\
        &
        \leq \exp(-\frac{C^2p}{2}N). \label{eq:p Hessian Lip}
    \end{align}
    \end{itemize}
\end{proposition}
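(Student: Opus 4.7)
The plan is to derive all three bounds from a single tail estimate for linear functionals of the disorder $J^{(p)}$, using Condition~\hyperlink{ass:LS}{(LS)} together with a net argument on the index set. The key observation is that each random variable appearing in the proposition is \emph{linear} in $J^{(p)}$, so its $\ell^2$-Lipschitz constant with respect to $J^{(p)}$ coincides with the square root of its variance, and Condition~\hyperlink{ass:LS}{(LS)} upgrades this via the Herbst argument into a sub-Gaussian tail. The entire proposition thus reduces to combinatorial variance estimates plus a chaining/union-bound argument.

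First I would establish the directional-derivative bound~\eqref{eq:p directional}, since the other two bounds are corollaries. Fix $\bm x\in\bm B^N$ and a unit vector $\bm v\in\R^N$. A direct expansion of $\partial_{j_1}\cdots\partial_{j_i}(x_{i_1}\cdots x_{i_p})$ produces $p!/(p-i)!$ terms; combined with $\norm{\bm v}=1$ and $\norm{\bm x}\leq\sqrt N$, this bounds the variance of the linear functional $J^{(p)}\mapsto\partial^i_{\bm v}H_N^{(p)}(\bm x)$ by a constant multiple of $(p!/(p-i)!)^2 N^{1-i}$. Herbst then yields
\begin{equation*}
    \PR{\bigl|\partial^i_{\bm v}H_N^{(p)}(\bm x)\bigr|\geq t}\leq 2\exp\left(-\frac{t^2}{2KC(p!/(p-i)!)^2 N^{1-i}}\right).
\end{equation*}
To pass to the supremum over $\bm x$ and $\bm v$, I would take a $\delta$-net of $\bm B^N\times\{\bm v\in\R^N:\norm{\bm v}=1\}$ of cardinality at most $(C'/\delta)^{2N}$, apply a union bound, and close the gap between the net and the full set either by the deterministic Lipschitz continuity of $(\bm x,\bm v)\mapsto\partial^i_{\bm v}H_N^{(p)}(\bm x)$ (controlled by next-order derivatives) or directly by Dudley chaining as in Corollary~5.25 of~\cite{van_handel_probability_2014}. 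The threshold in~\eqref{eq:p directional} is chosen precisely so that the $\log$-cardinality $2N\log(C'/\delta)$ of the net is absorbed into the exponent, leaving the stated decay $\exp(-C^2pN/2)$.

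Once~\eqref{eq:p directional} is in hand, the operator-norm bound~\eqref{eq:p Hessian Op} is immediate from the $i=2$ case, since $\norm{\nabla^2_E H_N^{(p)}(\bm x)}_{op}=\sup_{\norm{\bm v}=1}\bigl|\partial^2_{\bm v}H_N^{(p)}(\bm x)\bigr|$ for the symmetric matrix $\nabla^2_E H_N^{(p)}(\bm x)$. For the Lipschitz estimate~\eqref{eq:p Hessian Lip} I would apply the fundamental theorem of calculus,
\begin{equation*}
    \nabla^2_E H_N^{(p)}(\bm x)-\nabla^2_E H_N^{(p)}(\bm y)=\int_0^1 \partial_{\bm x-\bm y}\nabla^2_E H_N^{(p)}(\bm y+t(\bm x-\bm y))\,\dd t,
\end{equation*}
so that the operator norm on the left is bounded by $\norm{\bm x-\bm y}\cdot\sup_{\bm z\in\bm B^N,\,\norm{\bm w}=1}\norm{\partial_{\bm w}\nabla^2_E H_N^{(p)}(\bm z)}_{op}$. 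By polarization of the symmetric trilinear form $D^3 H_N^{(p)}$, the inner supremum is comparable to $\sup_{\bm z\in\bm B^N,\,\norm{\bm v}=1}\bigl|\partial^3_{\bm v}H_N^{(p)}(\bm z)\bigr|$, which is exactly the $i=3$ case of~\eqref{eq:p directional}.

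The main technical obstacle is the combinatorial bookkeeping in the variance estimates: one must track how the symmetrization of $\partial_{j_1}\cdots\partial_{j_i}(x_{i_1}\cdots x_{i_p})$ produces the precise factor $(p!/(p-i)!)^2$, and verify that the $\delta$-net argument closes uniformly in $\bm x\in\bm B^N$ without degrading the $\exp(-cpN)$ decay. Conceptually, the only new ingredient relative to the Gaussian setting of~\cite{following21} is the replacement of Borell--TIS by the LSI-based concentration inequality afforded by Condition~\hyperlink{ass:LS}{(LS)}; the remaining manipulations are purely deterministic and follow the same lines.
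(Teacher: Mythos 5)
Your proposal is correct in outline and takes a genuinely different route from the paper. The paper's proof first bounds $\mathbb{E}\sup_{\bm{x}\in\bm{B}^N}|H_N^{(p)}(\bm{x})|$ once via Dudley chaining and then LSI concentration (Lemma~\ref{lem:sup estimate}), and then reduces all three estimates to this single quantity through the injective-norm identity $\norm*{J^{(p),\Sym}}_\infty = N^{-1/2}\sup_{\bm{x}\in\bm{B}^N}|H_N^{(p)}(\bm{x})|$ coming from Waterhouse's theorem \eqref{eq:tensor norm}--\eqref{eq:sym norm}; the three claimed bounds are then purely deterministic Cauchy--Schwarz consequences of this identity. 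You instead attack each $\sup_{\bm{x},\bm{v}}|\partial_{\bm{v}}^i H_N^{(p)}(\bm{x})|$ directly via pointwise Herbst concentration plus a net. Your variance computation giving $(p!/(p-i)!)^2 N^{1-i}$ is right, the derivation of \eqref{eq:p Hessian Op} from the $i=2$ case is right (noting symmetry of the Hessian), and the FTC-plus-polarization step for \eqref{eq:p Hessian Lip} matches the mechanism the paper uses at the level of Corollary~\ref{cor:Lipschitz}, although in Proposition~\ref{prop:Lipschitz} itself the paper prefers a telescoping identity.

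One point deserves care: of the two options you offer for closing the gap between the net and the full set, option (a), controlling the Lipschitz constant of $(\bm{x},\bm{v})\mapsto\partial_{\bm{v}}^i H_N^{(p)}(\bm{x})$ by the next-order derivative, is circular at $i=3$, since the fourth directional derivative is not part of the statement and its uniform bound would require yet another net argument. The correct deterministic gap-closing uses multilinearity of $\partial_{\bm{v}}^i H_N^{(p)}(\bm{x})$ in $(\bm{x},\ldots,\bm{x},\bm{v},\ldots,\bm{v})$: for a symmetric $p$-linear form the supremum over a $\delta$-net of the sphere dominates the full supremum up to a factor $(1-p\delta)^{-1}$ when $\delta<1/p$, which is exactly the content of Waterhouse's theorem that the paper invokes. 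Your option (b), direct Dudley chaining, is also viable but then one still needs the $\ell^2$-Lipschitz dependence of $\sup_{\bm{x},\bm{v}}|\partial_{\bm{v}}^i H_N^{(p)}(\bm{x})|$ on $J^{(p)}$ (analogous to \eqref{eq:sup |H|}--\eqref{eq:H CS bound}) to feed into the LSI concentration inequality. Once either gap-closer is carried out, your route arrives at the same place with more repeated chaining work; the paper's single Dudley application plus the injective-norm bookkeeping is the more economical version of the same idea.
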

The proof of the proposition will be given in Section \ref{sec:proof of Lipschitz}, after we prove the auxiliary Lemma \ref{lem:sup estimate}. Before tending to it,   we provide the following immediate corollary on the regularity of $H_N(\bm{x})$.
\begin{corollary}
    \label{cor:Lipschitz}
    There exist constants $R>0$ and $c>0$ depending only on $\xi$ and $K$ such that for all $i=1,\ldots,3$ and $N\in\N$,
    \begin{align}
        &
        \PP
        \Big(
        \forall \bm{x}\in \bm{B}^N,\,\forall \bm{v}\in\R^N \  \text{with} \ \norm{\bm{v}}=1 : \abs{\partial_{\bm{v}}^i H_N(\bm{x})} < R N^{1-\frac{i}{2}}
        \Big)
        \geq 1-e^{-cN},
        \label{eq:directional}
        \\
        &
        \PP
        \Big(
        \forall \bm{x},\mathbf{y}\in \bm{B}^N : \norm*{\nabla_E^2 H_N(\bm{x}) - \nabla_E^2 H_N(\mathbf{y})}_{op} < \frac{R}{\sqrt{N}} \norm{\bm{x}-\mathbf{y}}
        \Big)
        \geq 1-e^{-cN}. \label{eq:Hessian Lip}
    \end{align}
\end{corollary}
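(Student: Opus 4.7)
The plan is to deduce both bounds from Proposition~\ref{prop:Lipschitz} applied separately to each pure $p$-spin component of $H_N=\sum_{p\geq 2}\gamma_p H_N^{(p)}$, followed by a union bound in $p$ and summation using the triangle inequality. The key analytic input is the decay of $|\gamma_p|$: since $\xi(z)=\sum_{p\geq 2}\gamma_p^2 z^p$ has radius of convergence strictly greater than $1$, there exist $r\in(0,1)$ and a constant $M>0$ (depending on $\xi$) such that $|\gamma_p|\leq M r^{p/2}$ for every $p\geq 2$. This decay beats any polynomial factor arising from the pure $p$-spin bounds.

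For \eqref{eq:directional}, I would fix $i\in\{1,2,3\}$ and, for each $p\geq\max\{2,i\}$, let $\mathcal{A}_p^{(i)}$ be the event in \eqref{eq:p directional}. The union $\bigcup_p \mathcal{A}_p^{(i)}$ has probability at most $\sum_{p}\exp(-C^2 p N/2)\leq 2\exp(-C^2 N)$ for $N$ large enough, so it fits in an $e^{-cN}$ envelope for a suitable $c>0$. On the complement, triangle inequality on the linear map $\bm{v}\mapsto \partial_{\bm{v}}^i H_N(\bm{x})$ gives
\begin{align*}
|\partial_{\bm{v}}^i H_N(\bm{x})|
\leq \sum_{p\geq \max\{2,i\}} |\gamma_p|\cdot\frac{2C\sqrt{Kp}}{N^{(i-2)/2}}\cdot \frac{p!}{(p-i)!}
\leq N^{1-i/2}\cdot 2C\sqrt{K}\sum_{p\geq 2} |\gamma_p|\, p^{i+1/2},
\end{align*}
and the last series is finite by the $|\gamma_p|\leq M r^{p/2}$ decay. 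Taking $R$ to be the maximum of the resulting constants over $i\in\{1,2,3\}$, and intersecting the three good events, gives \eqref{eq:directional}.

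For \eqref{eq:Hessian Lip} the reasoning is identical, now starting from \eqref{eq:p Hessian Lip}. The $p=2$ term contributes nothing since $\nabla^2_E H_N^{(2)}$ is a constant matrix in $\bm{x}$, so summation starts from $p=3$. On the good event one obtains
\begin{align*}
\norm*{\nabla^2_E H_N(\bm{x})-\nabla^2_E H_N(\bm{y})}_{op}
\leq \frac{\norm{\bm{x}-\bm{y}}}{\sqrt{N}}\cdot 2C\sqrt{K}\sum_{p\geq 3}|\gamma_p|\, p^{7/2},
\end{align*}
and again the series converges. Replacing $R$ by a larger constant if necessary absorbs this bound too.

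There is no real obstacle here — the proof is essentially bookkeeping. The only points that require attention are verifying (i) that the union bound over $p$ preserves the exponential rate $e^{-cN}$, which works because the exponents $C^2 p N/2$ grow linearly in $p$, and (ii) that the polynomial weights $p^{i+1/2}$, $p^{7/2}$ appearing after bounding $p!/(p-i)!\leq p^i$ are swamped by the geometric decay of $|\gamma_p|$ guaranteed by the radius-of-convergence hypothesis on $\xi$.
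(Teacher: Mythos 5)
Your proof is correct. For \eqref{eq:directional} your argument is essentially the paper's: fix $i$, apply the union bound to the events from \eqref{eq:p directional} over $p$ (the geometric decay of $\exp(-C^2pN/2)$ in $p$ keeps the total probability at the rate $e^{-cN}$), and on the good event sum the pure-spin bounds with the triangle inequality, which converges because $\abs{\gamma_p}$ decays geometrically by the radius-of-convergence assumption on $\xi$. For \eqref{eq:Hessian Lip} you take a genuinely different route from the paper, though both are valid. You sum \eqref{eq:p Hessian Lip} directly over $p\geq 3$, which is the straightforward thing to do. The paper instead bypasses \eqref{eq:p Hessian Lip} entirely and deduces \eqref{eq:Hessian Lip} from the already-established $i=3$ case of \eqref{eq:directional}: it writes $\partial_{\bm v}^2 H_N(\bm x)-\partial_{\bm v}^2 H_N(\bm y)$ as an integral of the mixed third derivative $\partial_{\bm u}\partial_{\bm v}^2 H_N$ along the segment from $\bm y$ to $\bm x$ and bounds this mixed derivative by $\sup_{\norm{\bm w}=1}\sup_{\bm z\in\bm B^N}\abs{\partial_{\bm w}^3 H_N(\bm z)}$, a step which implicitly relies on the symmetric multilinear maximum theorem of Waterhouse that the paper cites for \eqref{eq:tensor norm}. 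Your approach is in fact more direct and avoids that extra reduction; interestingly, the paper proves \eqref{eq:p Hessian Lip} but never invokes it in the corollary. One cosmetic remark: $\bm v\mapsto\partial_{\bm v}^i H_N(\bm x)$ is not linear for $i\geq 2$ (it is degree-$i$ homogeneous), but the triangle inequality you actually apply is the one over the sum in $p$, which is fine.
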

\begin{proof}
    Let  $i\in\{1,2,3\}$ and note that
 \begin{align*}
     \abs*{\partial_{\bm{v}}^i H_{N}(\bm{x})}
     \leq \sum_{p=\max\{2,i\}}^\infty \gamma_p\abs*{\partial_{\bm{v}}^i H_{N}^{(p)}(\bm{x})}.
 \end{align*}
 Let $R_i=2C\sqrt{K}\sum_{p\geq \max\{2,i\}}\gamma_p\sqrt{p}\frac{p!}{(p-i)!}$ where $C$ is the same absolute constant as in Lemma~\ref{lem:sup estimate}. By the union bound and \eqref{eq:p directional}~in~Proposition~\ref{prop:Lipschitz},
 \begin{align}
     &\PR{
     \exists \bm{x}\in\bm{B}^N,
     \exists \bm{v}
     \ \text{with} \ \norm{\bm{v}}=1:
     \abs{\partial_{\bm{v}}^i H_{N}(\bm{x})} 
     \geq \frac{R_i}{N^{(i-1)/2}}
     } \nonumber \\
     &\leq 
     \sum_{p=\max\{2,i\}}^\infty \exp(-\frac{C^2p}{2}N)
     \leq \exp(-cN), \nonumber
 \end{align}
 for sufficiently large $N$. By adjusting $c$ if needed,  the same bound on the probability holds for general $N$.

It remains to prove \eqref{eq:Hessian Lip}. For any $\bm{x},\bm{y}\in\bm{B}^N$ with $\bm{x}\neq \bm{y}$, setting $\bm{u}:=(\bm{x}-\bm{y})/\|\bm{x}-\bm{y}\|$, we have 
\begin{align}
    \norm*{\nabla_E^2 H_N(\bm{x}) - \nabla_E^2 H_N(\bm{y})}_{op}&=\max_{\|\bm{v}\|=1} | \partial_{\bm{v}}^2 
     H_N(\bm{x}) - \partial_{\bm{v}}^2 H_N(\bm{y})| \nonumber \\
  &   = 
     \max_{\|\bm{v}\|=1} \Big|\int_{0}^{\|\bm{x}-\bm{y}\|} \partial_{\bm{u}}\partial_{\bm{v}}^2 
     H_N(\bm{x}+t\bm{u}) dt\Big| \nonumber \\
     &\leq 
     \left(
     \max_{\|\bm{v}\|=1}
     \max_{\bm{z}\in\bm{B}^N}|\partial_{\bm{v}}^3 
     H_N(\bm{z})|
     \right) \norm{\bm{x}-\bm{y}}
     . \label{eq:third order derivative}
\end{align}
Then, applying \eqref{eq:directional} with $i=3$, \eqref{eq:third order derivative} yields 
\begin{align*}
     &\PR{
     \exists \bm{x},\bm{y}\in\bm{B}^N:
     \norm{\nabla^2_E H_{N}(\bm{x})-\nabla^2_E H_{N}(\bm{y})}_{op} 
     \geq \frac{R_3}{\sqrt{N}}\norm{\bm{x}-\bm{y}}
     } \nonumber \\
     &\leq \PR{
     \exists \bm{x},\bm{y}\in\bm{B}^N:
     \left(
     \max_{\|\bm{v}\|=1}
     \max_{\bm{z}\in\bm{B}^N}|\partial_{\bm{v}}^3 
     H_N(\bm{z})|
     \right) \norm{\bm{x}-\bm{y}} 
     \geq \frac{R_3}{\sqrt{N}}\norm{\bm{x}-\bm{y}}
     } \nonumber \\
     &=
     \PR{
     \max_{\|\bm{v}\|=1}
     \max_{\bm{z}\in\bm{B}^N}|\partial_{\bm{v}}^3 
     H_N(\bm{z})|
     \geq 
     \frac{R_3}{\sqrt{N}}
     }
     \leq e^{-cN},
 \end{align*}
 where $c$ is the same constant as in \eqref{eq:directional}. Therefore, the proof is completed by taking $R = \max\{R_1,\ldots,R_3\}$.
\end{proof}

We shall need the following in the proof of Proposition \ref{prop:Lipschitz}. 
\begin{lemma}
    \label{lem:sup estimate}
    Recall that $K$ is the  constant from Condition~\hyperlink{ass:LS}{(LS)}.
    There exists an absolute constant $C>0$ such that 
    for all $N\in\N$,   
    \begin{align}
    \frac{1}{N}\EX{\sup_{\bm{x}\in\bm{B}^{N}}\abs*{H_{N}(\bm{x})}}
    \leq C\sqrt{K\xi(1)\log_+\frac{\xi'(1)}{\xi(1)}}, \label{eq:sup estimate.1} 
\end{align}
    where $\log_+ x=\max\{1,\log x\}$.
    Moreover, for all integer $p\geq 2$, integer $N\in\N$ and $t>0$, we have 
    \begin{align}
        \PR{
        \frac{1}{N}\sup_{\bm{x}\in\bm{B}^{N}}\abs*{H_{N}^{(p)}(\bm{x})}\geq 
        2C\sqrt{Kp}
        }\leq \exp(-\frac{C^2p}{2}N).
        \label{eq:sup estimate.2}
    \end{align}
\end{lemma}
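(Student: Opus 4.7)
\emph{Proof plan.} The plan is to prove~\eqref{eq:sup estimate.1} by applying Dudley's entropy integral to the sub-Gaussian process $(H_N(\bm x))_{\bm x\in\bm B^N}$, and then to upgrade the pure $p$-spin version to the deviation bound~\eqref{eq:sup estimate.2} via Herbst's concentration for log-Sobolev measures.

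For~\eqref{eq:sup estimate.1}, the first step is to control the sub-Gaussian pseudo-metric of the process. The increment
\begin{align*}
H_N(\bm x)-H_N(\bm y)=\sum_{p\geq 2}\frac{\gamma_p}{N^{(p-1)/2}}\sum_{i_1,\ldots,i_p=1}^N J^{(p)}_{i_1,\ldots,i_p}\bigl(x_{i_1}\cdots x_{i_p}-y_{i_1}\cdots y_{i_p}\bigr)
\end{align*}
is a linear combination of independent $K$-sub-Gaussian variables (by Condition~\hyperlink{ass:SG}{(SG)}), so it is itself sub-Gaussian with variance proxy at most $K$ times the sum of squared coefficients, namely
\begin{align*}
\sigma(\bm x,\bm y)^2 := KN\Bigl[\xi\bigl(\tfrac{\norm{\bm x}^2}{N}\bigr)+\xi\bigl(\tfrac{\norm{\bm y}^2}{N}\bigr)-2\xi\bigl(\tfrac{\langle\bm x,\bm y\rangle}{N}\bigr)\Bigr].
\end{align*}
A telescoping expansion of $x_{i_1}\cdots x_{i_p}-y_{i_1}\cdots y_{i_p}$ combined with $\norm{\bm x},\norm{\bm y}\leq\sqrt N$ gives the global Euclidean Lipschitz bound $\sigma(\bm x,\bm y)\leq L\norm{\bm x-\bm y}$ with $L^2=O(K\xi''(1))$, while $\sigma(\bm 0,\bm x)^2\leq KN\xi(1)$ at every single point. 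Since $H_N(\bm 0)=0$, Dudley's bound for sub-Gaussian processes together with the covering number $\cN(\eps,\bm B^N,\sigma)\leq (3L\sqrt N/\eps)^N$ yields
\begin{align*}
\EX{\sup_{\bm x\in\bm B^N}\abs*{H_N(\bm x)}}\leq C\int_0^{\sqrt{KN\xi(1)}}\sqrt{N\log(3L\sqrt N/\eps)}\,\mathrm d\eps.
\end{align*}
A change of variable $\eps=L\sqrt N u$ together with the elementary estimate $\int_0^a\sqrt{\log(1/u)}\,\mathrm du\lesssim a\sqrt{\log_+(1/a)}$, applied at $a\asymp\sqrt{\xi(1)/\xi''(1)}$, reduces the right-hand side to a constant multiple of $N\sqrt{K\xi(1)\log_+(\xi'(1)/\xi(1))}$, which is~\eqref{eq:sup estimate.1}.

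For~\eqref{eq:sup estimate.2}, specialising~\eqref{eq:sup estimate.1} to the pure $p$-spin kernel $\xi(z)=z^p$ gives $N^{-1}\EX{\sup_{\bm B^N}\abs*{H_N^{(p)}}}\leq C\sqrt{K\log_+ p}\leq C\sqrt{Kp}$. To pass from expectation to a high-probability bound I would use the tensorisation of~\hyperlink{ass:LS}{(LS)}: the joint law of $J^{(p)}=(J^{(p)}_{i_1,\ldots,i_p})$ on $\R^{N^p}$ still satisfies a logarithmic Sobolev inequality with constant $K$, so Herbst's argument yields $\PR{\abs{F-\EX F}>t}\leq 2\exp(-t^2/(2K\abs*{F}_\Lip^2))$ for every Lipschitz $F:\R^{N^p}\to\R$. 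I would apply this with $F(J^{(p)})=\sup_{\bm x\in\bm B^N}\abs*{H_N^{(p)}(\bm x;J^{(p)})}$; its Lipschitz constant in $J^{(p)}$ is controlled by $\sup_{\bm x\in\bm B^N}\norm{\nabla_{J^{(p)}}H_N^{(p)}(\bm x)}=\sup_{\bm x\in\bm B^N}\norm{\bm x}^p/N^{(p-1)/2}\leq\sqrt N$, so the deviation bound reads $\PR{\abs{F-\EX F}>t}\leq 2\exp(-t^2/(2KN))$. Choosing $t=CN\sqrt{Kp}$ and absorbing the bound $\EX F\leq CN\sqrt{Kp}$ from the previous paragraph delivers the target $\exp(-C^2pN/2)$.

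The routine parts are the telescoping increment estimate and the Herbst concentration step. The main obstacle is the Dudley computation: choosing the correct cut-off in the entropy integral so that the emerging logarithm has the precise form $\log_+(\xi'(1)/\xi(1))$ stated in~\eqref{eq:sup estimate.1}. A related subtlety is that on the whole ball $\bm B^N$, the Euclidean Lipschitz constant of $\sigma$ naturally picks up a factor of $\sqrt{\xi''(1)}$ rather than the cleaner $\sqrt{\xi'(1)}$ visible on the sphere; the discrepancy is benign because it only changes the logarithm by an absolute multiplicative constant.
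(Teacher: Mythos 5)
Your proposal follows essentially the same route as the paper: \eqref{eq:sup estimate.1} via Dudley's entropy integral for the sub-Gaussian process, and \eqref{eq:sup estimate.2} via the pure-$p$-spin specialisation of \eqref{eq:sup estimate.1} combined with the Lipschitz dependence of $\sup_{\bm x}|H_N^{(p)}(\bm x;J^{(p)})|$ on $J^{(p)}\in\R^{N^p}$ (Lipschitz constant $\sqrt N$) and Herbst-type concentration under the log-Sobolev hypothesis. This is exactly the structure of the paper's argument; the paper invokes Proposition~2.3 of \cite{ledoux_concentration_1999} for the concentration step, which is the statement you call Herbst.

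The one point of substance where you diverge is the Dudley computation. The paper does not redo the entropy estimate: it normalises to the unit ball, records the canonical pseudo-metric $d(\bm x,\bm y)=\sqrt{2(\xi(1)-\xi(\langle\bm x,\bm y\rangle))}$, and cites the metric-entropy bound from the proof of Proposition~C.1 in \cite{montanari_solving_2024}, which delivers the precise form $\log_+(\xi'(1)/\xi(1))$. Your direct route, bounding $\sigma$ by a \emph{global} Euclidean Lipschitz constant $L$ via telescoping, inevitably gives $L^2\asymp K(\xi'(1)+\xi''(1))$ and hence $\log_+(\xi''(1)/\xi(1))$ in the end. You flag this but dismiss it as "only changes the logarithm by an absolute multiplicative constant." That claim is false in general: Cauchy--Schwarz gives $\xi'(1)^2\le 2\xi(1)\xi''(1)$ but there is no reverse inequality, and one can make $\xi''(1)/\xi(1)$ arbitrarily large while $\xi'(1)/\xi(1)$ stays bounded (e.g.\ $\gamma_p^2\propto p^{-3}$ up to degree $P$, then let $P\to\infty$). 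So your argument proves a genuinely weaker version of \eqref{eq:sup estimate.1}. That said, the only place \eqref{eq:sup estimate.1} is actually invoked in the paper is with $\xi(z)=z^r$, for which $\xi'(1)=r$ and $\xi''(1)=r(r-1)\le r^2$, so the two logarithms \emph{do} agree up to a factor of two; your weaker bound therefore suffices to derive \eqref{eq:sup estimate.2} and everything downstream. If you want to recover \eqref{eq:sup estimate.1} as stated, you would need the refined entropy estimate (effectively: on the sphere of radius $r$ the local Lipschitz constant of the canonical metric is $\sqrt{\xi'(r^2)}$ rather than $\sqrt{\xi''(1)}$, and the sphere contribution dominates), which is what the cited \cite{montanari_solving_2024} lemma provides.

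A very minor remark: the paper's claimed identity for the canonical metric, $d(\bm x,\bm y)=\sqrt{2(\xi(1)-\xi(\langle\bm x,\bm y\rangle))}$, is an equality only on the sphere; on the ball it is an inequality (and in general one must take a bit of care with negative overlaps). Your expression $\sigma(\bm x,\bm y)^2=KN[\xi(\rho_x)+\xi(\rho_y)-2\xi(\langle\bm x,\bm y\rangle/N)]$ is the correct one. This does not affect the validity of Dudley's bound, which only needs an upper bound on the metric.
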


\begin{remark}
The work of Sawhney and Sellke showed (Theorem 1.5 in~\cite{sawhney_free_2024}) a finer universality result than \eqref{lem:sup estimate} for spin glass Hamiltonians with finite mixture. More precisely, under finite $2p$-moment conditions on the $p$-th disorders (which is weaker than Condition~\hyperlink{cond:LS}{(LS)}), the ground state converges to that of Gaussian Hamiltonians. However, here we want a \emph{non-asymptotic} estimate of the ground state.
\end{remark}

\begin{proof}[Proof of Lemma~\ref{lem:sup estimate}]
    We start with the proof of \eqref{eq:sup estimate.1} and we follow the proof of the upper bound of Proposition C.1 in \cite{montanari_solving_2024}.  
    Denoting by $\B^N=\{\bm{x}\in\R^N:\norm*{\bm{x}}\leq 1\}$ the $N$-dimensional unit ball, 
    we introduce the normalized Hamiltonian 
    \begin{align}
        \hat{H}_N(\bm{x}) = \frac{1}{\sqrt{NK}} H_N(\sqrt{N}\bm{x}), 
        \qquad \bm{x}\in \B^N.
    \end{align}
    The canonical (pseudo) metric of $\hat{H}_N(\cdot)$ is given by
    \begin{align*}
    d(\bm{x},\bm{y})
    &:= \EX{(\hat{H}_N(\bm{x})-\hat{H}_N(\bm{y}))^2}^{1/2}
    = \sqrt{2(\xi(1)-\xi(\langle \bm{x},\bm{y}\rangle))},
    \qquad \bm{x},\bm{y}\in\B^N.
    \end{align*}  
    Using the independence of the coefficients $(J^{(p)}_{i_1,\ldots,i_p})_{p\geq 2,i_1,\ldots,i_p\in \llbracket 1,N\rrbracket}$ and Condition \hyperlink{ass:SG}{(SG)}, one can check the sub-Gaussian estimate
      \begin{align*}
      \EX{\exp(s(\hat{H}_N(\bm{x})-\hat{H}_N(\bm{y})))}
    	\leq 
    	\exp(\frac{s^2}{2}d(\bm{x},\bm{y})^2),
        \qquad \bm{x},\bm{y}\in \B^N.
    \end{align*}
    Of course, the process $\hat{H}_N(\cdot)$ on $\B^N$ is  centered and separable (see Definition~5.22~in~\cite{van_handel_probability_2014}).  Hence, by 
    Dudley's entropy bound (cf. Corollary~5.25~in~\cite{van_handel_probability_2014}), there exists an absolute constant $C>0$ such that
    \begin{align}
        \EX{\sup_{\bm{x}\in\B^N}\hat{H}_{N}(\bm{x})}
        \leq 
        12\int_0^{2\xi(1)} \sqrt{\log \cN(\B^N,d,\varepsilon)} \dd{\varepsilon}
        \leq 
        C\sqrt{N\xi(1)\log_+\frac{\xi'(1)}{\xi(1)}},
        \label{eq:dudley bound}
    \end{align}
    where $\cN(\B^N,d,\varepsilon)$ is the covering number of $\B^N$ with respect to the canonical metric of $\hat{H}_N(\cdot)$, and the second inequality in \eqref{eq:dudley bound} above follows from the estimate of the metric entropy \[\log\cN(\B^N,d,\varepsilon)\]
    in the paragraph after (39) in the proof of Proposition C.1 in \cite{montanari_solving_2024} which does not require $\hat{H}(\cdot)$ to be a Gaussian process. Therefore, with the same absolute constant $C>0$ as in \eqref{eq:dudley bound},
    \begin{align}
        \frac{1}{N}\EX{\sup_{\bm{x}\in\bm{B}^N} H_N(\bm{x})}
        =
        \frac{\sqrt{K}}{\sqrt{N}}\EX{\sup_{\bm{x}\in\B^N}\hat{H}_{N}(\bm{x})}
        \leq 
        C\sqrt{K\xi(1)\log_+\frac{\xi'(1)}{\xi(1)}}.
        \label{eq:sup estimate done.1}
    \end{align}
    By replacing $H_N(\cdot)$ with $-H_N(\cdot)$ in the argument above to derive \eqref{eq:sup estimate done.1}, we obtain \eqref{eq:sup estimate.1}.
    
    It remains to prove \eqref{eq:sup estimate.2}.
    Note that for any real-valued functions $g_1$ and $g_2$ defined on a set $S$, we have
\begin{align*}
    \sup_S g_1 - \sup_S g_2 \leq \sup_S(g_1-g_2).
\end{align*}
Moreover, we have $H^{(p)}_{N}(\bm{y};J^{(p)})-H^{(p)}_{N}(\bm{y};{J'}^{(p)})=H^{(p)}_{N}(\bm{y};J^{(p)}-{J'}^{(p)})$, where we use the notation $H^{(p)}_{N}(\bm{y};J^{(p)})$ to make the dependence in the disorder explicit.  
Thus, we have
\begin{align}
    \bigg|
    \sup_{\bm{y}\in\bm{B}^N} 
    \abs*{H^{(p)}_{N}(\bm{y};J^{(p)})} - 
    \sup_{\bm{y}\in\bm{B}^N} 
    \abs*{H^{(p)}_{N}(\bm{y};{J'}^{(p)})}
    \bigg|
    &\leq 
    \sup_{\bm{y}\in\bm{B}^N} \abs*{H^{(p)}_{N}(\bm{y};J^{(p)}-{J'}^{(p)})}, \label{eq:sup |H|}
\end{align}
where the inequality above follows from the element inequality $\abs{\abs{x}-\abs{y}}\leq \abs{x-y}$ for all $x,y\in\R$. 

On the other hand, for all $\bm{y}\in\bm{B}^N$, the Cauchy--Schwarz inequality yields
\begin{align}
    \abs*{H^{(p)}_{N}(\bm{y};J^{(p)}-{J'}^{(p)})}
    &\leq 
    \frac{1}{N^{(p-1)/2}} 
    \left(
    \sum_{i_1,\ldots,i_p=1}^N (J_{i_1,\ldots,i_p}^{(p)}-{J'}_{i_1,\ldots,i_p}^{(p)})^2
    \right)^{1/2}
    \left(
    \sum_{i_1,\ldots,i_p=1}^N y_{i_1}^2\cdots y_{i_p}^2
    \right)^{1/2} \nonumber \\
    &\leq
    \sqrt{N} 
    \left(
    \sum_{i_1,\ldots,i_p=1}^N (J_{i_1,\ldots,i_p}^{(p)}-{J'}_{i_1,\ldots,i_p}^{(p)})^2
    \right)^{1/2}
	\nonumber \\    
    &=
    \sqrt{N} 
    \norm*{J^{(p)}-{J'}^{(p)}}_{\R^{N^p}},
    \label{eq:H CS bound}
\end{align}
where $\norm{\cdot}_{\R^{N^p}}$ is the Euclidean norm in $\R^{N^p}$. 
Combining \eqref{eq:sup |H|} and \eqref{eq:H CS bound}, we conclude that
\begin{align*}
    \bigg|
    \sup_{\bm{y}\in\bm{B}^N} 
    \abs*{
    H^{(p)}_{N}(\bm{y};J^{(p)})
    }- 
    \sup_{\bm{y}\in\bm{B}^N} 
    \abs*{
    H^{(p)}_{N}(\bm{y};{J'}^{(p)})
    }
    \bigg|
    \leq 
    \sqrt{N}
    \norm*{J^{(p)}-{J'}^{(p)}}_{\R^{N^p}}.
\end{align*}
Applying \eqref{eq:sup estimate.1} with $\xi(x)=x^p$ yields
\begin{align*}
    \EX{\sup_{\bm{x}\in\bm{B}^{N}}\abs*{H_{N}^{(p)}(\bm{x})}}
    \leq C\sqrt{K\log_+(p)}N\leq C\sqrt{Kp}N,
\end{align*}
and then by Condition~\hyperlink{ass:LS}{(LS)} and  and Proposition 2.3 in \cite{ledoux_concentration_1999}, we conclude that 
\begin{align*}
    &\PR{
    \frac{1}{N}\sup_{\bm{y}\in \bm{B}^N}
    \abs*{H^{(p)}_{N}(\bm{y})}
    \geq C\sqrt{K p} + \sqrt{K} t
    }
    \nonumber \\ 
    &\leq 
    \PR{
    \sup_{\bm{y}\in \bm{B}^N} \abs*{H^{(p)}_{N}(\bm{y})}
    -
    \EX{\sup_{\bm{y}\in \bm{B}^N} \abs*{H^{(p)}_{N}(\bm{y})}}
    \geq \sqrt{K} tN
    } \\
    &\leq \exp(-\frac{t^2}{2}N),
\end{align*}
where $C$ is the same constant as in \eqref{eq:sup estimate.1}.
\end{proof}

\subsection{Proof of Proposition~\ref{prop:Lipschitz}} \label{sec:proof of Lipschitz}

Denote by $\mathfrak{S}_p$ the permutation group of degree $p$. In the proof, we define 
\begin{align}
    J^{(p),\Sym}_{i_1,\ldots,i_p} 
    = \frac{1}{p!}
    \sum_{\tau\in\mathfrak{S}_p} J_{i_{\tau(1)},\ldots,i_{\tau(p)}}^{(p)}
\end{align}
the symmetrized version of $J^{(p)}$, and  define the injective norm 
\begin{align}
    \norm*{J^{(p),\Sym}}_\infty
    =
        \sup_{
        i\in\llbracket 1,p\rrbracket:
        \norm*{\bm{u^i}}=1
        }
        \sum_{i_1,\ldots,i_p=1}^N
        J^{(p),\Sym}_{i_1,\ldots,i_p}
        u^1_{i_1}\cdots u^p_{i_p}
        = 
        \sup_{
        \norm*{\bm{u}}=1
        }
        \sum_{i_1,\ldots,i_p=1}^N
        J^{(p),\Sym}_{i_1,\ldots,i_p}
        u_{i_1}\cdots u_{i_p}
        , \label{eq:tensor norm}
\end{align}
where the second equality follows by Theorem 1~in~\cite{waterhouse_absolute-value_1990}.\footnote{This fact has been rediscovered many times. See Section 2 of \cite{waterhouse_absolute-value_1990} for a brief history.} 
     Since we can write
\begin{align*}
    H_{N}^{(p)}(\bm{x})
    =
    \frac{1}{N^{(p-1)/2}}
    \sum_{i_1,\ldots,i_p=1}^N
    J^{(p),\Sym}_{i_1,\ldots,i_p}
    x_{i_1}\cdots x_{i_p},
\end{align*}
we have
    \begin{align}
        \norm*{J^{(p),\Sym}}_\infty
        =
        \frac{1}{\sqrt{N}} \sup_{\bm{x}\in\bm{B}^N}\abs*{H_{N}^{(p)}(\bm{x})}.
        \label{eq:sym norm}
    \end{align}

\paragraph{Proof of \eqref{eq:p directional}.} 
Fix $i\in\{1,2,3\}$, $p\geq \max\{2,i\}$ and $N\in\N$. For all $\bm{x}\in\bm{B}^N$ and $\bm{v}\in\R^N$ with $\norm{\bm{v}}=1$, we have
\begin{align}
    \partial_{\bm{v}}^i H_{N}^{(p)}(\bm{x})
    =
    \frac{1}{N^{(p-1)/2}} 
    \frac{p!}{(p-i)!}
    \sum_{i_1,\ldots,i_p=1}^N
    J^{(p),\Sym}_{i_1,\ldots,i_p}
    x_{i_1}\cdots x_{i_{p-i}}v_{i_{p-i+1}}\cdots v_{i_p}. \label{eq:tensor expression}
\end{align}
Then, \eqref{eq:tensor norm}, \eqref{eq:tensor expression} and the Cauchy--Schwarz inequality yield
\begin{align}
    \abs*{\partial_{\bm{v}}^i H_{N}^{(p)}(\bm{x})}
    \leq
    \frac{1}{N^{(p-1)/2}}
    \frac{p!}{(p-i)!}
    \norm*{J^{(p),\Sym}}_\infty
    \norm*{\bm{x}}^{p-i}
    \norm*{\bm{v}}^i
    \leq 
    \frac{1}{N^{(i-1)/2}}
    \frac{p!}{(p-i)!}
    \norm*{J^{(p),\Sym}}_\infty. \label{eq:d_v upp bound}
\end{align}
 Combining \eqref{eq:sym norm} and \eqref{eq:d_v upp bound},
 \eqref{eq:sup estimate.2}~of~Lemma~\ref{lem:sup estimate} 
 yields \eqref{eq:p directional}
with the same absolute constant 
$C>0$ as in Lemma~\ref{lem:sup estimate}. 

 \paragraph{Proof of \eqref{eq:p Hessian Op}.} 
 Fix $p\geq 2$ and $N\in\N$. For any $\bm{x}\in\bm{B}^N$ and for any $\bm{u}\in\R^N$ with $\norm{\bm{u}}=1$, we have
\begin{align}
    &\langle
    \nabla^2_E H_{N}^{(p)}(\bm{x})\bm{u},\bm{u}
    \rangle
  =
    \sum_{i,j=1}^N
    \partial_i\partial_jH_{N}^{(p)}(\bm{x})u_iu_j 
    \nonumber \\
    &=
    \frac{1}{N^{(p-1)/2}}
    \sum_{i,j=1}^N
    \left(p(p-1)(1-\delta_{i,j})+\frac{p(p-1)}{2}\delta_{i,j})\right)
    \sum_{i_1,\ldots,i_{p-2}=1}^N
    J^{(p),\Sym}_{i_1,\ldots,i_{p-2},i,j}
    x_{i_1}\cdots x_{i_{p-2}}u_iu_j .\nonumber 
\end{align}
Hence,
\begin{align}
    |\langle
    \nabla^2_E H_{N}^{(p)}(\bm{x})\bm{u},\bm{u}
    \rangle|
  &\leq 
    \frac{p(p-1)}{N^{(p-1)/2}}
    \Big|\sum_{i,j=1}^N
    \sum_{i_1,\ldots,i_{p-2}=1}^N
    J^{(p),\Sym}_{i_1,\ldots,i_{p-2},i,j}
    x_{i_1}\cdots x_{i_{p-2}}u_iu_j\Big| \nonumber \\
    &
    \leq
    \frac{p(p-1)}{N^{(p-1)/2}}
    \norm*{J^{(p),\Sym}}_\infty\norm*{\bm{x}}^{p-2}\norm*{\bm{u}}^2 
    \leq
    \frac{p(p-1)}{\sqrt{N}}
    \sup_{\bm{y}\in\bm{B}^N}\abs*{H_{N}^{(p)}(\bm{y})}, \label{eq:Hess op upp bnd}
\end{align}
where \eqref{eq:Hess op upp bnd}  follows from \eqref{eq:sym norm}. 
Then, \eqref{eq:p Hessian Op} follow from \eqref{eq:Hess op upp bnd} and  Lemma~\ref{lem:sup estimate}, with the same absolute constant $C>0$ as in Lemma~\ref{lem:sup estimate}.


\paragraph{Proof of \eqref{eq:p Hessian Lip}.} 
Fix $p\geq 3$ and $N\in\N$. For any $\bm{x},\bm{y}\in\bm{B}^N$ and for any $\bm{u}\in\R^N$ with $\norm{\bm{u}}=1$, we have
\begin{align}
    &\langle
    (\nabla^2_E H_{N}^{(p)}(\bm{x})-\nabla^2_E H_{N}^{(p)}(\bm{y}))\bm{u},\bm{u}
    \rangle \nonumber \\
    &=
    \sum_{i,j=1}^N
    (\partial_{i}\partial_{j} H_{N}^{(p)}(\bm{x})-\partial_{i}\partial_{j} H_{N}^{(p)}(\bm{y}))u_iu_j \nonumber \\
    &
    =
    \frac{p(p-1)}{N^{(p-1)/2}}
    \sum_{i,j=1}^N
    \sum_{i_1,\ldots,i_{p-2}=1}^N
    J^{(p),\Sym}_{i_1,\ldots,i_{p-2},i,j} 
    (x_{i_1}\cdots x_{i_{p-2}} - y_{i_1}\cdots y_{i_{p-2}})
    u_iu_j. \label{eq:Hessian diff}
\end{align}
By the telescoping sum 
\begin{align*}
    x_{i_1}\cdots x_{i_{p-2}} - y_{i_1}\cdots y_{i_{p-2}}
    =\sum_{\ell=1}^{p-2} x_{i_1}\cdots x_{i_{\ell-1}}(x_{i_{\ell}}-y_{i_{\ell}}) y_{i_{\ell+1}}\cdots y_{i_{p-2}},
\end{align*}
we obtain
\begin{align}
    \eqref{eq:Hessian diff}
    &=
    \frac{p(p-1)}{N^{(p-1)/2}}
    \sum_{\ell=1}^{p-2}
    \sum_{i,j=1}^N
    \sum_{i_1,\ldots,i_{p-2}=1}^N
    J^{(p),\Sym}_{i_1,\ldots,i_{p-2},i,j} 
     x_{i_1}\cdots x_{i_{\ell-1}}(x_{i_{\ell}}-y_{i_{\ell}}) y_{i_{\ell+1}}\cdots y_{i_{p-2}}
    u_iu_j
    \nonumber \\
    &=
    \frac{p(p-1)}{N^{(p-1)/2}}
    \sum_{\ell=1}^{p-2}
    \sum_{i_1,\ldots,i_{p}=1}^N
    J^{(p),\Sym}_{i_1,\ldots,i_p} 
     x_{i_1}\cdots x_{i_{\ell-1}}(x_{i_{\ell}}-y_{i_{\ell}}) y_{i_{\ell+1}}\cdots y_{i_{p-2}}
    u_{p-1}u_p 
    \nonumber
\end{align}
and so
\begin{align}
    &\norm{\nabla^2_E H_{N}^{(p)}(\bm{x})-\nabla^2_E H_{N}^{(p)}(\bm{y})}_{op} \nonumber \\
    &\leq 
    \frac{p(p-1)}{N^{(p-1)/2}}
    \sum_{\ell=1}^{p-2}
    \sum_{i_1,\ldots,i_{p}=1}^N
    \norm*{J^{(p),\Sym}}_{\infty}
    \norm{\bm{x}}^{\ell-1}
    \norm{\bm{x}-\bm{y}}
    \norm{\bm{y}}^{p-\ell-2}
    \norm{\bm{u}}^2
    \\
    &\leq \frac{1}{N}p(p-1)(p-2)
    \norm*{J^{(p),\Sym}}_{\infty}
    \norm{\bm{x}-\bm{y}}. \label{eq:Hess Lip upp bnd}
\end{align}
Combining \eqref{eq:sym norm} and \eqref{eq:Hess Lip upp bnd},
 \eqref{eq:sup estimate.2}~of~Lemma~\ref{lem:sup estimate} 
 yields \eqref{eq:p Hessian Lip}
 with the same absolute constant $C>0$ as in Lemma~\ref{lem:sup estimate}. 

\section{Spectral universality for the truncated Hessian}
\label{sec:truncated model}

Fix $p\geq 2$. We define 
\begin{align*}
    H^{\leq p}_{N}(\bm{x}) 
    = \sum_{r=2}^p \gamma_r H^{(r)}_{N}(\bm{x}), \quad \bm{x}\in\bm{B}^N
\end{align*}
to be the Hamiltonian of the mixed spherical glass model with mixture $\xi_p(z) = \sum_{r=2}^p \gamma_r^2 z^r$. For all $\bm{x}\in\bm{B}^N$, we define the Euclidean Hessian and the spherical Hessian of $H^{\leq p}_{N}$ by
\begin{align*}
\nabla_E^2 H^{\leq p}_{N}(\bm{x}) 
=
\sum_{r=2}^p
\gamma_r
\nabla_E^2 H^{(r)}_{N}(\bm{x}),
\qquad
\nabla^2 H^{\leq p}_{N}(\bm{x}) 
=
P^\intercal_{\bm{x}} \nabla_E^2 H^{\leq p}_{N}(\bm{x}) P_{\bm{x}},
\end{align*}
respectively. 
Denoting by $\lambda_{N,p,\bm{x}}^{(1)},\ldots, \lambda_{N,p,\bm{x}}^{(N)}$ the eigenvalues of $\nabla^2 H^{\leq p}_{N}(\bm{x})$,
we define $\mu_{N,p,\bm{x}} = \frac{1}{N}\sum_{i=1}^N\delta_{\lambda_{N,p,\bm{x}}^{(i)}}$ to be the empirical spectral distribution of $\nabla^2 H^{\leq p}_{N}(\bm{x})$ and $\bar{\mu}_{N,p,\bm{x}}$ to be the mean empirical distribution of $\nabla^2 H^{\leq p}_{N}(\bm{x})$. The main result we prove in this section is the following.

\begin{proposition}
\label{prop:asymptotic freeness}
Assume that \hyperlink{ass:M}{(M)} holds. 
Then, for all $p\geq 2$ and $k\in\N$, we have
\begin{align}
    \lim_{N\rightarrow\infty}
    \sup_{\bm{x}\in\bm{B}^N}
    \bigg|
    \frac{1}{\xi_p''(\rho_{\bm{x}})^{k/2}N} 
    \E
    \Big[
    \Tr (\nabla_E^2 H_N^{\leq p}(\bm{x}))^{k}
    \Big]
    -C_k
    \bigg|
    =
    0,
    \label{eq:truncated moments}
\end{align}
where $\rho_{\bm{x}} = \norm{\bm{x}}^2/N$ and 
\begin{align*}
    C_{k}=
    \begin{cases}
        \frac{1}{k/2+1}{k \choose k/2}, & k \  \text{even}, \\
        0, & k \ \text{odd}.
    \end{cases}
\end{align*}
For $k$ even, $C_k$ is the $k/2$-th Catalan number. The convergence \eqref{eq:truncated moments} yields that for all $p\geq 2$ and $f\in\Lip_1(\mathbb{R})$, we have
\begin{align}
    \lim_{N\rightarrow\infty}
    \sup_{\bm{x}\in\bm{B}^N}
    \abs{
    \int_{y\in\mathbb{R}}f(y)\dd{\bar{\mu}_{N,p,\bm{x}}}(y)
    -
    \int_{y\in\mathbb{R}}f(y)\dd\mu_{sc,2\xi_p''(\rho_{\bm{x}})^{1/2}}(y)
    }
    =0,
    \label{eq:uniform mu bar}
\end{align}
where
\begin{align*}
   \dd\mu_{sc,2\xi_p''(\rho_{\bm{x}})^{1/2}}(y) 
   = \frac{1}{2\pi\xi''(\rho_{\bm{x}})}\sqrt{4\xi_p''(\rho_{\bm{x}}) - y}\Ind_{\abs{y}\leq 2\xi_p''(\rho_{\bm{x}})^{1/2}}\dd{y}. 
\end{align*}
\end{proposition}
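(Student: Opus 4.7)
The strategy is the Wigner moment method: first establish the trace convergence \eqref{eq:truncated moments}, then deduce the bounded Lipschitz convergence \eqref{eq:uniform mu bar}. The key observation is that $\nabla_E^2 H_N^{\leq p}(\bm{x})$ behaves to leading order as a Wigner-type matrix with effective entry variance $\xi_p''(\rho_{\bm{x}})/N$. Indeed, using independence and unit variance of the $J^{(r)}_{\mathbf i}$, one computes
\[
\E\bigl[(\nabla_E^2 H_N^{(r)}(\bm{x}))_{jk}^2\bigr] = \frac{r(r-1)\,\rho_{\bm{x}}^{r-2}}{N} \qquad (j\neq k),
\]
and summing over $r\leq p$ with weights $\gamma_r^2$ yields $\xi_p''(\rho_{\bm{x}})/N$. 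Expanding
\[
\E\bigl[\Tr (\nabla_E^2 H_N^{\leq p}(\bm{x}))^k\bigr] = \sum_{j_1,\ldots,j_k=1}^N \E\Bigl[\prod_{\ell=1}^k (\nabla_E^2 H_N^{\leq p}(\bm{x}))_{j_\ell, j_{\ell+1}}\Bigr]
\]
(with $j_{k+1}=j_1$) as a sum over decorated closed walks indexed by spin levels $r_\ell\in\llbracket 2,p\rrbracket$, position pairs $(a_\ell,b_\ell)$ with $a_\ell\neq b_\ell$ and dust indices, the independence and mean-zero properties of the $J^{(r)}$ force only walks in which every $J$-entry appears an even number of times to contribute.

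The leading contribution comes from pair matchings: each $J^{(r)}_{\mathbf i}$ appears exactly twice. For a matched pair $(\ell,\ell')$, equality of the ordered tensor indices forces $r_\ell=r_{\ell'}=:r$ and $\{j_\ell,j_{\ell+1}\}=\{j_{\ell'},j_{\ell'+1}\}$ (as unordered pairs), with dust indices matched one-to-one. A standard Wigner-type argument shows the number of admissible $(j_1,\ldots,j_k)$ is at most $N^{k/2+1}$, with equality exactly for non-crossing pair matchings, counted by $C_{k/2}$. Each pair at spin $r$ yields a factor $\gamma_r^2\, r(r-1)\,\rho_{\bm{x}}^{r-2}/N$ after summing dust indices using $\sum_i x_i^2 = N\rho_{\bm{x}}$; summing over $r\leq p$ and over the $k/2$ pairs gives $\xi_p''(\rho_{\bm{x}})^{k/2}/N^{k/2}$, and combining with $N^{k/2+1}$ vertex choices produces the claimed $C_{k/2}\,\xi_p''(\rho_{\bm{x}})^{k/2}\,N$. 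Subleading terms come from (i) non-pair matchings, bounded using condition~\hyperlink{ass:M}{(M)}; (ii) crossing pair matchings, losing at least one vertex degree of freedom; and (iii) dust-pivot index coincidences, each losing a factor $\rho_{\bm{x}}\leq 1$. Since $\rho_{\bm{x}}\in[0,1]$ on $\bm{B}^N$ and the combinatorial constants depend only on $p$, $k$ and the moments of $J^{(r)}$ (uniformly bounded via \hyperlink{ass:M}{(M)}), all error estimates are uniform in $\bm{x}$.

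For the bounded Lipschitz convergence, note that $\nabla^2 H_N^{\leq p}(\bm{x})=P_{\bm{x}}^\intercal \nabla_E^2 H_N^{\leq p}(\bm{x})P_{\bm{x}}$ differs from $\nabla_E^2 H_N^{\leq p}(\bm{x})$ by a perturbation of rank at most $2$, so their mean spectral distributions agree up to $O(1/N)$ in bounded Lipschitz distance. Corollary~\ref{cor:Lipschitz} supplies a deterministic operator-norm bound on $\nabla_E^2 H_N^{\leq p}(\bm{x})$ outside an event of exponentially small probability, uniformly in $\bm{x}$; together with \hyperlink{ass:M}{(M)} this yields uniform tail control on $\bar{\mu}_{N,p,\bm{x}}$. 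Since the target semicircle is compactly supported and determined by its moments, standard polynomial approximation on a bounded interval, combined with the uniform moment convergence just established, converts moment convergence into uniform bounded Lipschitz convergence. The main obstacle is the bookkeeping in the moment expansion itself: the entries of $\nabla_E^2 H_N^{\leq p}(\bm{x})$ are not i.i.d.\ Wigner entries but are built from a symmetric tensor and correlated through $\bm{x}$, so verifying that dust-pivot collisions and non-pair matchings are subleading \emph{uniformly in} $\bm{x}\in\bm{B}^N$ requires careful tracking, though it ultimately reduces to classical Wigner-type moment estimates using only the bounded moments provided by \hyperlink{ass:M}{(M)}.
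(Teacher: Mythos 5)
The proposal takes a genuinely different route from the paper's. The paper proves \eqref{eq:truncated moments} by a Lindeberg-type comparison: using rotation invariance it reduces the Gaussian-disorder case to the north pole, where the projected Hessian (with the trivial zero eigenvalue removed) is in distribution a scaled GOE matrix, so its normalized trace moments are Catalan numbers by a classical result; it then shows that the \emph{difference} between general-disorder and Gaussian moments is $O(1/N)$ uniformly in $\bm{x}$. The key structural point is that second-moment (pure pair-matching) contributions cancel exactly between the two disorders, so the difference lives only on index configurations where some $J$-coefficient appears at least three times (Lemma~\ref{lem:first reduction}), and these are controlled by the elaborate combinatorics of Section~\ref{sec:proof of T upp bnd}. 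Your proposal instead attempts to extract the Catalan count directly from the moment expansion at a general point $\bm{x}$, treating $\nabla_E^2 H_N^{\leq p}(\bm{x})$ as a Wigner-like matrix. The bounded-Lipschitz step (rank-$2$ reduction, polynomial approximation and moment tails) matches the paper's and is fine.

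The direct route has a genuine gap precisely where $\nabla_E^2 H_N^{\leq p}(\bm{x})$ fails to be a Wigner matrix, namely at what you call \emph{dust-pivot coincidences}. Two things go wrong. First, the claim that for a matched pair $(\ell,\ell')$ the equality of the ordered tensor indices forces $\{j_\ell,j_{\ell+1}\}=\{j_{\ell'},j_{\ell'+1}\}$ is false: the pivot positions inside the shared $r$-tuple can differ, in which case $j_\ell,j_{\ell+1},j_{\ell'},j_{\ell'+1}$ are four unrelated components of the same tensor, with the $\ell$-pivots sitting in $\ell'$'s dust slots and vice versa. These are exactly the configurations responsible for nontrivial correlations between distinct Hessian entries. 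Second, the reason you give for dismissing them — ``each losing a factor $\rho_{\bm{x}}\leq 1$'' — is not a loss at all, since $\rho_{\bm{x}}$ can equal $1$ (for example on the sphere). The actual reason such terms are subleading is a counting argument showing they cost a power of $N$, and for concentrated $\bm{x}$ one picks up factors like $x_j^2$ of order $N\rho_{\bm{x}}$ which naively compensate that loss; controlling this requires precisely the uniform exponent bound $\alpha(\cP,I)\leq k/2$ proved by the inductive case analysis of Lemmas~\ref{lem:T upp bnd}--\ref{lem:retraction}. The paper sidesteps this entirely for pair matchings via the Gaussian comparison; your direct approach would have to carry out an analogous estimate for all pair configurations, not merely the ones with a $J$ appearing three or more times, and the sketch does not supply it.
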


\subsection{Proof of \eqref{eq:uniform mu bar} in Proposition \ref{prop:asymptotic freeness}} \label{sec:proof of uniform mu bar}

Denoting by $\lambda_{E,N,p,\bm{x}}^{(1)}\leq \cdots\leq \lambda_{E,N,p,\bm{x}}^{(N)}$ the eigenvalues of $\nabla^2_E H^{\leq p}_{N}(\bm{x})$, we first show that it suffices to show \eqref{eq:uniform mu bar} for 
\[\bar{\mu}_{E,N,p,\bm{x}} = \frac{1}{N}\sum_{i=1}^N \delta_{\lambda_{E,N,p,\bm{x}}^{(i)}},\] 
which is the mean empirical spectral distribution of $\nabla^2_E H^{\leq p}_{N}(\bm{x})$.
Let $\lambda_{N,p,\bm{x}}^{(N)} = 0$ be the eigenvalue of $\nabla^2 H^{\leq p}_{N}(\bm{x})$
corresponding to the eigenvector $\bm{x}$ and $\lambda_{N,p,\bm{x}}^{(1)}\leq \cdots\leq \lambda_{N,p,\bm{x}}^{(N-1)}$ be all other eigenvalues of  $\nabla^2 H^{\leq p}_{N}(\bm{x})$.
Then, the Cauchy interlacing theorem implies that 
\begin{align}
    \lambda_{E,N,p,\bm{x}}^{(i)}
    \leq 
    \lambda_{N,p,\bm{x}}^{(i)}
    \leq 
    \lambda_{E,N,p,\bm{x}}^{(i+1)}, 
    \quad i=1,\ldots,N-1. \label{eq:interlacing}
\end{align}
Thus, for any $f\in\Lip_1(\R)$, we have
\begin{align}
    &\abs{
    \int_\R f(y)
    \dd{\bar{\mu}_{E,N,p,\bm{x}}(y)}
    -
    \int_\R f(y)
    \dd{\bar{\mu}_{N,p,\bm{x}}(y)}
    } \nonumber \\
    &\leq
    \frac{1}{N}\sum_{i=1}^N\EX{
    \abs*{
    f(\lambda_{E,N,p,\bm{x}}^{(i)})-f(\lambda_{N,p,\bm{x}}^{(i)})}
    }
    \nonumber \\
    &\leq
    \frac{1}{N}\sum_{i=1}^N\EX{
    \abs*{\lambda_{E,N,p,\bm{x}}^{(i)}-\lambda_{N,p,\bm{x}}^{(i)}}}
    \nonumber \\
    &\leq
    \frac{1}{N}\sum_{i=1}^{N-1}\EX{
    \lambda_{E,N,p,\bm{x}}^{(i+1)}-\lambda_{E,N,p,\bm{x}}^{(i)}} + \frac{1}{N}\EX{\abs*{\lambda_{E,N,p,\bm{x}}^{(N)}}}
    \nonumber \\
    &=
    \frac{1}{N}(\EX{ \lambda_{E,N,p,\bm{x}}^{(N)}}-\EX{ \lambda_{E,N,p,\bm{x}}^{(1)}} + \EX{\abs*{ \lambda_{E,N,p,\bm{x}}^{(N)}}})
    \nonumber \\
    &\leq 
    \frac{3}{N}\EX{\norm*{\nabla_E^2 H^{\leq p}_{N}(\bm{x})}_{op}}.
    \label{eq:Lip f upp}
\end{align}
By Lemma~\ref{lem:sup estimate} and \eqref{eq:Hess op upp bnd}, we have 
\begin{align*}
\sup_{\bm{x}\in\bm{B}^N}\EX{\norm*{\nabla_E^2 H^{\leq p}_{N}(\bm{x})}_{op}}
\leq
\EX{\sup_{\bm{x}\in\bm{B}^N}\norm*{\nabla_E^2 H^{\leq p}_{N}(\bm{x})}_{op}}
\leq p(p-1)\sqrt{Kp}\sqrt{N},
\end{align*}
so combining this with \eqref{eq:Lip f upp} yields that there exists a constant $C(p,K,\xi)>0$ such that
\begin{align*}
	\sup_{\bm{x}\in\bm{B}^N}
\abs{
    \int_\R f(y)
    \dd{\bar{\mu}_{E,N,p,\bm{x}}(y)}
    -
    \int_\R f(y)
    \dd{\bar{\mu}_{N,p,\bm{x}}(y)}
    }
    &\leq 
    \frac{C(p,K,\xi)}{\sqrt{N}}
    \rightarrow 0,
\end{align*}
as $N\rightarrow\infty$. 
It remains to show that \eqref{eq:truncated moments} implies \eqref{eq:uniform mu bar} where $\bar{\mu}_{N,p,\bm{x}}$ is substituted by $\bar{\mu}_{E,N,p,\bm{x}}$. Fix $f\in\Lip_1(\mathbb{R})$. Denote by $\bar{\nu}_{E,N,p,\bm{x}}$ the mean empirical distribution of 
\[\frac{1}{\xi''_p(\rho_{\bm{x}})^{1/2}}\nabla_E^2 H_N^{\leq p}(\bm{x})\]
and by $\mu_{sc} = \mu_{sc,2}$ the standard semicircular law. Then,
\begin{align}
    &
    \sup_{\bm{x}\in\bm{B}^N}
    \abs{
    \int_{y\in\mathbb{R}}f(y)\dd{\bar{\mu}_{E,N,p,\bm{x}}}(y)
    -
    \int_{y\in\mathbb{R}}f(y)\dd\mu_{sc,2\xi_p''(\rho_{\bm{x}})^{1/2}}(y)
    } 
    \nonumber \\
    &=
    \sup_{\bm{x}\in\bm{B}^N}
    \xi_p''(\rho_{\bm{x}})^{1/2}
    \abs{
    \int_{u\in\mathbb{R}}\tilde{f}(u)\dd{\bar{\nu}_{E,N,p,\bm{x}}}(u)
    -
    \int_{u\in\mathbb{R}}\tilde{f}(u)\dd\mu_{sc}(u)
    }, \label{eq:gDiff}
\end{align}
where $\tilde{f}(u) = \frac{1}{\xi''_p(\rho_{\bm{x}})^{1/2}}f(\xi''_p(\rho_{\bm{x}})^{1/2} u)$. Note that $\tilde{f}\in\Lip_1(\mathbb{R})$ and $\xi_p''(q)^{1/2}\leq \xi_p''(1)^{1/2}$ for any $q\in (0,1]$. Thus,
\begin{align}
    \eqref{eq:gDiff}
    \leq 
    \xi_p''(1)^{1/2}
    \sup_{\bm{x}\in\bm{B}^N}
    \abs{
    \int_{u\in\mathbb{R}}\tilde{f}(u)\dd{\bar{\nu}_{E,N,p,\bm{x}}}(u)
    -
    \int_{u\in\mathbb{R}}\tilde{f}(u)\dd\mu_{sc}(u)
    }.
    \label{eq:gDiff.1}
\end{align}
We claim that for all $\varepsilon>0$ and $g\in\Lip_1(\mathbb{R})$,
\begin{align}
    \sup_{\bm{x}\in\bm{B}^N}
    \abs{
    \int_{u\in\mathbb{R}}g(u)\dd{\bar{\nu}_{E,N,p,\bm{x}}}(u)
    -
    \int_{u\in\mathbb{R}}g(u)\dd\mu_{sc}(u)
    }
    <
    \varepsilon.
    \label{eq:gDiff.2}
\end{align}
Note that combining \eqref{eq:gDiff.2} with \eqref{eq:gDiff.1} yields \eqref{eq:uniform mu bar}. 

We now prove \eqref{eq:gDiff.2}. Fix $\varepsilon>0$. Take $M=\max\{1/\varepsilon,10\}$. By the Stone--Weierstrass theorem, there exists a polynomial $\mathsf{p}$ of degree $L$ such that 
\begin{align}
    \sup_{y\in [-M,M]} \abs{g(y) - \mathsf{p}(y)} < \varepsilon.
    \label{eq:Stone-Weierstrass}
\end{align}
The triangle inequality yields
\begin{align}
    &\sup_{\bm{x}\in\bm{B}^N}
    \abs{
    \int_{u\in\mathbb{R}}g(u)\dd{\bar{\nu}_{E,N,p,\bm{x}}}(u)
    -
    \int_{u\in\mathbb{R}}g(u)\dd\mu_{sc}(u)
    }
    \nonumber \\
    &\leq
    \sup_{\bm{x}\in\bm{B}^N}
    \int_{u\in\mathbb{R}}\abs{g(u)-\mathsf{p}(u)}\dd{\mu_{sc}}(u)
    +
    \sup_{\bm{x}\in\bm{B}^N}
    \int_{u\in\mathbb{R}}\abs{g(u)-\mathsf{p}(u)}\dd{\bar{\nu}_{E,N,p,\bm{x}}}(u)
    \nonumber \\
    &+
    \sup_{\bm{x}\in\bm{B}^N}
    \abs{
    \int_{u\in\mathbb{R}}g(u)\dd{\bar{\nu}_{E,N,p,\bm{x}}}(u)
    -
    \int_{u\in\mathbb{R}}g(u)\dd\mu_{sc}(u)
    }
    \nonumber \\
    &=
    \text{\RomanNumeralCaps 1}
    +
    \text{\RomanNumeralCaps 2}
    +
    \text{\RomanNumeralCaps 3}.
    \label{eq:Lipschitz 3}
\end{align}
By \eqref{eq:Stone-Weierstrass}, \RomanNumeralCaps 1 in \eqref{eq:Lipschitz 3} is bounded by $\varepsilon$. By \eqref{eq:truncated moments}, \RomanNumeralCaps 3 in \eqref{eq:Lipschitz 3} tends to $0$ as $N\rightarrow\infty$. By \eqref{eq:Stone-Weierstrass} and \eqref{eq:truncated moments}, 
\begin{align}
    \limsup_{N\rightarrow\infty}
    \ 
    (\text{\text{\RomanNumeralCaps 2} in \eqref{eq:Lipschitz 3}})
    &\leq 
    \varepsilon
    +
    \limsup_{N\rightarrow\infty}
    \sup_{\bm{x}\in\bm{B}^N}
    \int_{u\in [M,M]^c}\abs{g(u)-\mathsf{p}(u)}\dd{\bar{\nu}_{E,N,p,\bm{x}}}(u). \label{eq:III}
\end{align}
It remains to show that the second term in \eqref{eq:III} is small. Since $g$ is bounded, there exists a constant $C=C_{\mathsf{p},g}$ depending only on $g$ and $\mathsf{p}$ such that for all positive integer $L'$,
\begin{align*}
    &\sup_{\bm{x}\in\bm{B}^N}
    \int_{u\in [M,M]^c}\abs{g(u)-\mathsf{p}(u)}\dd{\bar{\nu}_{E,N,p,\bm{x}}}(u)
    \nonumber \\
    &\leq 
    C_{\mathsf{p},g}
    \sup_{\bm{x}\in\bm{B}^N}
    \int_{u\in [M,M]^c}
    \abs{u}^L 
    \dd{\bar{\nu}_{E,N,p,\bm{x}}}(u)
    \nonumber \\
    &\leq
    C_{\mathsf{p},g}
    M^{-L-2L'}
    \sup_{\bm{x}\in\bm{B}^N}
    \int_{u\in \R}
    u^{2L+2L'} 
    \dd{\bar{\nu}_{E,N,p,\bm{x}}}(u)
\end{align*}
Then by taking the limit supremum, \eqref{eq:truncated moments} yields
\begin{align*}
    \text{second term in \eqref{eq:III}}
    \leq 
    C_{\mathsf{p},g}
    M^{-L-2L'} C_{2L+2L'}
    \leq 
    C_{\mathsf{p},g}
    10^{-L-2L'} 2^{2L+2L'},
\end{align*}
where we apply the facts that $M\geq 10$ and $C_{2\ell}\leq 2^{2\ell}$ to derive the second inequality. Therefore, by taking $L'$ sufficiently large, the second term in \eqref{eq:III} is also bounded from above by $\varepsilon$.
This proves \eqref{eq:gDiff.2}.

\subsection{Proof of \eqref{eq:truncated moments} in  Proposition~\ref{prop:asymptotic freeness}} \label{sec:proof of truncated moments}
Fix $p\geq 2$, $k\in\N$ and $q\in (0,1]$. Without loss of generality, we assume that $k\geq 2$. We use $\tilde J^{\leq p} = \left (\tilde J^{(r)}: r\in\llbracket 2,p\rrbracket\right)$ to denote the disorder in the Gaussian case. Namely, we assume that 
$\tilde J^{(r)}=(\tilde J^{(r)}_{i_1,\ldots,i_r})$ are i.i.d. standard Gaussian random variables. Define the corresponding Gaussian Hamiltonian  
\begin{align*}
    \tilde{H}^{(r)}_{N}(\bm{x})
    =
    H^{(r)}_{N}(\bm{x};\tilde{J}^{(r)}), \quad \bm{x}\in \bm{B}^N.
\end{align*} 
For the Gaussian Hamiltonian, as explained in the fourth paragraph in Section of \cite{following21}, with the zero eigenvalue removed, the eigenvalues of $\nabla^2 \tilde{H}_N^{\leq p}(\bm{x})$ have the same joint distribution as those of
\begin{align}\label{eq:GaussianHessian}
   \sqrt{\frac{N-1}{N}}\xi_p''(\rho_{\bm{x}})^{1/2}\bm{G},
\end{align}
where $\bm{G}$ is a $(N-1)$-dimensional GOE matrix. Hence, Lemma~2.1.6~in~\cite{AGZ_RMT} implies that \eqref{eq:truncated moments} holds for $\nabla^2 \tilde{H}_N^{\leq p}(\bm{x})$.
To prove \eqref{eq:truncated moments}  with general $J^{\leq p}$, we will show that
\begin{align*}
	\lim_{N\rightarrow\infty}
	\sup_{\bm{x}\in\bm{B}^N}
     \frac{1}{\xi''(\rho_{\bm{x}})^{k/2}N} 
     \sup_{\bm{x}\in\bm{B}^N}    
     \abs{
    \EX{\Tr (\nabla^2 H_N^{\leq p}(\bm{x}))^k}
    -
    \EX{\Tr (\nabla^2 \tilde{H}_N^{\leq p}(\bm{x}))^k}}
	=  
    0.
\end{align*}
By the triangle inequality, the left-hand side above is bounded by 
\begin{align}
    \text{\RomanNumeralCaps 1} + \text{\RomanNumeralCaps 2}
    &\coloneqq 
    \sup_{\bm{x}\in\bm{B}^N}
    \frac{1}{\xi''(\rho_{\bm{x}}^{k/2})N}
    \abs{
    \EX{\Tr (\nabla_E^2 \tilde{H}_N^{\leq p}(\bm{x}))^k}
    -
    \EX{\Tr (\nabla^2 \tilde{H}_N^{\leq p}(\bm{x}))^k}
    }
    \nonumber \\   
    &+
	\sup_{\bm{x}\in\bm{B}^N}
     \frac{1}{\xi''(\rho_{\bm{x}})^{k/2}N} 
    \abs{
    \EX{\Tr (\nabla_E^2 H_N^{\leq p}(\bm{x}))^k}
    -
    \EX{\Tr (\nabla_E^2 \tilde{H}_N^{\leq p}(\bm{x}))^k}
    }.
    \label{eq:moment convergence}
\end{align} 

We first argue that Term~\RomanNumeralCaps{1} in \eqref{eq:moment convergence} converges uniformly to $0$. 
Now, since for all $r\in\llbracket 1,p\rrbracket$, the law of $(\tilde H_{r}(\bm{x}))$ is invariant under isometry of $\R^N$ acting on $\bm{x}$, we can replace $\bm{x}$ in Term~\RomanNumeralCaps{1} by the north pole $\bm{n}_{\bm{x}} = (0,\ldots,0,\norm{\bm{x}})$. This yields
\begin{align}
\text{
Term~\RomanNumeralCaps{1} in \eqref{eq:moment convergence}
}
\leq
\frac{1}{N}
\sum_{\bm{j}\in\llbracket 1,N\rrbracket^k}
\abs{T_{\bm{j}}},
\label{eq:trace}
\end{align}
where
\begin{align*}
    T_{\bm{j}} 
    = \EX{(\nabla_E^2 \tilde{H}_{N}^{(p_1)}(\bm{n}_{\bm{x}}))_{j_1,j_2} \cdots (\nabla_E^2 \tilde{H}_{N}^{(p_{k-1})}(\bm{n}_{\bm{x}}))_{j_{k-1},j_k}(\nabla_E^2 \tilde{H}_{N}^{(p_k)}(\bm{n}_{\bm{x}}))_{j_k,j_{k+1}}}
\end{align*}
with the convention that $j_{k+1}=j_1$. 
Define an graph $G_{\bm{j}} = (V_{\bm{j}},E_{\bm{j}})$ where the vertex set $V_{\bm{j}}$ consists of the distinct tuples of $\bm{j}$ and the edge set (permitting loops) is defined as
\begin{align*}
    E_{\bm{j}} = \{\{j_1,j_2\},\ldots,\{j_{k-1},j_k\},\{j_k,j_{k+1}\}\}.
\end{align*}
Define $c:E_{\bm{j}} \rightarrow \llbracket 1,p\rrbracket$ by $c(\{j_\ell,j_{\ell+1}\}) = p_\ell$ for all $\ell\in\llbracket 1,k\rrbracket$.
We also define $\tilde{G}_{\bm{j}}=(V_{\bm{j}},\tilde{E}_{\bm{j}})$ to be the skeleton of $\tilde{G}_{\bm{j}}$ where $\tilde{E}_{\bm{j}}$ is the set of edges in $E_{\bm{j}}$ without multiplicities. 
By the independence and centering of the entries, we see that $T_{\bm{j}}=0$ unless each edge $e\in E_{\bm{j}}$ is of multiplicity at least $2$ and for each $e,e'\in E_{\bm{j}}$ with $e=e'$, $c(e)=c(e')$.
In particular, this implies that $\tilde{E_{\bm{j}}}\leq \floor{k/2}$. Also, note that $\tilde{G}_{\bm{j}}$ is connected, so we have $\abs{V_{\bm{j}}}\leq \floor{k/2}+1$. 
Moreover, for non-vanishing $T_{\bm{j}}$, each edge in $E_{\bm{j}}$ is of multiplicity at least $2$. For each $r\in\llbracket 1,k\rrbracket$, we have 
\begin{align}
    \EX{
    (
    \partial_{j_1}\partial_{j_2}\tilde{H}_{N}^{(p_r)}(\bm{n}_{\bm{x}})
	)^2    
    } 
    \leq 10 p_r^4
    \label{eq:H(n) covariance}
\end{align}
By the H\"{o}lder inequality, \eqref{eq:H(n) covariance} and the fact that $\norm{\cdot}_{s} \leq \norm{\cdot}$ for all $s\geq 2$, \eqref{eq:H(n) covariance} yields that there exists 
a constant $C(p,k)>0$ depending only on $p$ and $k$ such that
\begin{align*}
    \abs{T_{\bm{j}}} \leq \frac{C(p,k)}{N^{k/2}}.
\end{align*}
Therefore,
\begin{align}
	\eqref{eq:trace}
	\leq
    &\frac{C(p,k)}{N^{1+k/2}}\sum_{t=1}^{\floor{k/2}+1}
    \sum_{\bm{j}\in\llbracket 1,N\rrbracket ^k} \Ind
    \{\abs{V_{\bm{j}}} = t\}\Ind\{\exists i\in\llbracket 1,k\rrbracket:j_i=N\} \nonumber \\
    &= 
    \frac{C(p,k)}{N^{1+k/2}}
    \left(
    1+\sum_{t=1}^{\floor{k/2}+1} (N-1)(N-2)\cdots(N-t+1)
    \right) \nonumber \\
    &\leq 
    \frac{C(p,k)}{N}
    \nonumber 
\end{align}
for $N$ sufficiently large.

It remains to consider Term~\RomanNumeralCaps{2} in  \eqref{eq:moment convergence}. By the triangle inequality,
\begin{align}
    &\abs{
    \frac{1}{N}
    \EX{\Tr (\nabla_E^2 H_N^{\leq p}(\bm{x}))^k}
    -
    \frac{1}{N}
    \EX{\Tr (\nabla_E^2 \tilde{H}_N^{\leq p}(\bm{x}))^k}
    } \leq
    \sum_{p_1,\ldots,p_k=2}^p
    \abs*{
    \gamma_{p_1}\cdots\gamma_{p_k}
    } \nonumber \\
    &\quad\quad
    \cdot \abs{
\frac{1}{N} \EX{\Tr \nabla_E^2 H^{(p_1)}_{N}(\bm{x})\cdots\nabla_E^2 H^{(p_k)}_{N}(\bm{x})}
-
\frac{1}{N} \EX{\Tr \nabla_E^2 \tilde{H}^{(p_1)}_{N}(\bm{x})\cdots\nabla_E^2 \tilde{H}^{(p_k)}_{N}(\bm{x})} 
}. 
\label{eq:expansion of trace}
\end{align}
We claim that there exists a constant $C_1(p,k,K)>0$ such that for all $\bm{p}=(p_1,\ldots,p_k)\in\llbracket 2,p\rrbracket^k$ and $\bm{x}\in\bm{B}^N$,
\begin{align}
&D(\bm{x},\bm{p})
\nonumber \\
&\coloneqq
\abs{
\frac{1}{N} \EX{\Tr \nabla_E^2 H^{(p_1)}_{N}(\bm{x})\cdots\nabla_E^2 H^{(p_k)}_{N}(\bm{x})}
-
\frac{1}{N} \EX{\Tr \nabla_E^2 \tilde{H}^{(p_1)}_{N}(\bm{x})\cdots\nabla_E^2 \tilde{H}^{(p_k)}_{N}(\bm{x})} 
}
\nonumber \\
&\leq 
\frac{C_1(p,k,K)}{N}. \label{eq:H and G are close}
\end{align}
Applying \eqref{eq:H and G are close} to the difference in \eqref{eq:expansion of trace} then yields
 the required convergence to zero and completes the proof of \eqref{eq:truncated moments}. Indeed,
the number of ways to choose $\bm{p}$ is finite and independent of $N$. 
Thus, we proceed in the next section to the proof of \eqref{eq:H and G are close}. 

\subsection{Proof of \texorpdfstring{\eqref{eq:H and G are close}}{eq:H and G are close}} \label{sec:proof for H and G}

Throughout this section, we fix $\bm{p}=(p_1,\ldots,p_k)\in\llbracket 2,p\rrbracket^k$ and $\bm{x}\in\bm{B}^N$, and we define  $\abs{\bm{p}}=p_1+\cdots+p_k$.

For the sake of traceability, we break the estimate of $D(\bm{x_N},p)$ in \eqref{eq:H and G are close} into several lemmas.

\begin{lemma}
    \label{lem:first reduction}
    There exists a constant $C_2(p,k,K)>0$ such that $D(\bm{x_N},p)$ defined in \eqref{eq:H and G are close} is bounded from above by 
    \begin{align*}
        D(\bm{x_N},p)
        \leq 
        C_2(p,k,K)
        \sum_{
        \substack{
        \cP:
        \text{partition on $\llbracket 1,k\rrbracket$},
        \\
        \min_{P\in\cP}|P|\geq 2,\,
        \max_{P\in\cP}|P|>2
        } 
        }
        S(\cP)
    \end{align*}
    where $S(\cP)=S_N(\cP)$ is defined by
    \begin{align*}
        S(\cP) = \frac{1}{N^{1+(\abs{\bm{p}}-k)/2}}
        \sum_{j_1,\ldots,j_k=1}^N 
        \sum_{\bm{i}\in\cA(\cP)}
        \prod_{\ell=1}^k 
        |
        \partial_{j_{\ell}}
        \partial_{j_{\ell+1}}
        (x_{i_1^\ell}\cdots x_{i_{p_\ell}^\ell}) 
        |
    \end{align*}
    with the conventions that $j_{k+1}=j_1$,
    that the index $\bm{i}=(\bm{i}^1,\ldots,\bm{i}^k)$, where $\bm{i}^\ell=(i_1^\ell,\ldots,i_{p_\ell}^\ell)$, 
runs over $\llbracket 1,N\rrbracket^{\bm{p}} \coloneqq \llbracket 1,N\rrbracket^{p_1}\times\cdots\times \llbracket 1,N\rrbracket^{p_k}$, and that
    \begin{align*}
    \cA(\cP)
    =
    \left
    \{ \bm{i}\in\llbracket 1,N\rrbracket^{\bm{p}}:
    \text{
    for all $\ell_1,\ell_2\in P\in\cP$, $p_{\ell_1}=p_{\ell_2}$ and $\bm{i}^{\ell_1} = \bm{i}^{\ell_2}$
    }
    \right
    \}.
\end{align*}
\end{lemma}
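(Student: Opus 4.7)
The plan is to expand the trace into an explicit multi-sum over the indices $\bm j=(j_1,\ldots,j_k)$ and $\bm i=(\bm i^1,\ldots,\bm i^k)$, and then reduce the difference to a sum over partitions where Gaussian and non-Gaussian joint moments of the $J$'s actually disagree. From the entrywise definition of $\nabla_E^2 H^{(p_\ell)}_N(\bm x)$, one has
\begin{align*}
\frac{1}{N}\Tr\prod_{\ell=1}^k\nabla_E^2 H_N^{(p_\ell)}(\bm x)=\frac{1}{N^{1+(\abs{\bm p}-k)/2}}\sum_{j_1,\ldots,j_k=1}^N\sum_{\bm i\in\llbracket 1,N\rrbracket^{\bm p}}\Big(\prod_{\ell=1}^k J^{(p_\ell)}_{\bm i^\ell}\Big)\prod_{\ell=1}^k\partial_{j_\ell}\partial_{j_{\ell+1}}(x_{i_1^\ell}\cdots x_{i_{p_\ell}^\ell}),
\end{align*}
with $j_{k+1}=j_1$ and the analogous identity for $\tilde H$. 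After taking expectations, $D(\bm x,\bm p)$ becomes a sum over $(\bm j,\bm i)$ weighted by $\abs{\EX{\prod_\ell J^{(p_\ell)}_{\bm i^\ell}}-\EX{\prod_\ell\tilde J^{(p_\ell)}_{\bm i^\ell}}}$ times the same product of derivatives.

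The core of the argument is a partition dichotomy. For each $\bm i$, let $\cP_{\bm i}$ be the partition of $\llbracket 1,k\rrbracket$ defined by $\ell\sim\ell'$ iff $p_\ell=p_{\ell'}$ and $\bm i^\ell=\bm i^{\ell'}$. Because distinct entries of $J$ are mutually independent and mean-zero, the expectation of $\prod_\ell J^{(p_\ell)}_{\bm i^\ell}$ vanishes unless every block of $\cP_{\bm i}$ has size at least $2$; the same holds for $\tilde J$. Moreover, when every block has size exactly $2$, both expectations factor as $\prod_{P\in\cP_{\bm i}}\EX{(J^{(p_P)}_{\bm i^P})^2}=1$ by the unit-variance normalization, so their difference vanishes. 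Hence only $\bm i$ inducing a partition with $\min_P\abs{P}\geq 2$ and $\max_P\abs{P}>2$ contribute to $D(\bm x,\bm p)$.

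For any such $\bm i$, independence factorizes each expectation into a product over blocks of single-variable moments of order at most $k$, which are bounded uniformly by a constant via Condition~\hyperlink{ass:M}{(M)} (and by explicit Gaussian moments on the other side); this yields $\abs{\EX{\prod_\ell J^{(p_\ell)}_{\bm i^\ell}}-\EX{\prod_\ell\tilde J^{(p_\ell)}_{\bm i^\ell}}}\leq C_2(p,k,K)$. Pushing absolute values inside, applying this bound, and grouping the sum over $\bm i$ according to $\cP_{\bm i}$ yields a sum over admissible partitions $\cP$ of terms of the form
\begin{align*}
\frac{1}{N^{1+(\abs{\bm p}-k)/2}}\sum_{j_1,\ldots,j_k=1}^N\sum_{\bm i:\cP_{\bm i}=\cP}\prod_{\ell=1}^k\abs{\partial_{j_\ell}\partial_{j_{\ell+1}}(x_{i_1^\ell}\cdots x_{i_{p_\ell}^\ell})}.
\end{align*}
Since $\{\bm i:\cP_{\bm i}=\cP\}\subseteq\cA(\cP)$---the latter only requires $\bm i$ to be constant on each block of $\cP$, without forbidding extra coincidences across blocks---enlarging the inner sum to $\cA(\cP)$ produces exactly $S(\cP)$ and yields the claimed bound. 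The main obstacle I anticipate is the combinatorial bookkeeping: cleanly verifying that pair matchings contribute identically in the Gaussian and non-Gaussian cases (so they drop from the difference), and that the relaxation from $\{\cP_{\bm i}=\cP\}$ to $\cA(\cP)$ is harmless. Both points are definition-tracking rather than deep analysis, but each invites off-by-one errors in the partition lattice that must be checked carefully.
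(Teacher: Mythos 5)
Your proposal is correct and follows essentially the same route as the paper: expand the trace, reduce $D(\bm x,\bm p)$ to a sum of moment-difference terms $D_{\bm i}$, kill the contributions where the induced partition has a singleton block or consists only of pairs, bound the surviving $D_{\bm i}$ uniformly via Condition (M), and regroup the index sum into the quantities $S(\cP)$. Your bookkeeping is in fact slightly tighter than the paper's: by grouping exactly by the induced partition $\cP_{\bm i}$ and only then enlarging to $\cA(\cP)$, you avoid the paper's mild overcount (each $\bm i$ appears in $\cA(\cP)$ for every refinement $\cP$ of $\cP_{\bm i}$) and make transparent why the all-pairs and singleton cases may be discarded, whereas the paper's assertion that $\tilde S(\cP)=0$ for those $\cP$ is, read literally, slightly imprecise.
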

\begin{proof}[Proof of Lemma~\ref{lem:first reduction}]
    Note that
\begin{align}
&\frac{1}{N}\EX{\Tr \nabla_E^2 H^{(p_1)}_{N}(\bm{x})\cdots\nabla_E^2 H^{(p_k)}_{N}(\bm{x})} \nonumber \\ 
&= 
\frac{1}{N^{1+(\abs{\bm{p}}-k)/2}}
\sum_{j_1,\ldots,j_k=1}^N 
\sum_{\bm{i}\in\llbracket 1,N\rrbracket^{\bm{p}}}
\Big( \EX{
J_{i_1^1,\ldots,i^1_{p_1}}^{(p_1)}\cdots J_{i_1^k,\ldots,i^k_{p_k}}^{(p_k)}
}
\prod_{\ell=1}^k \partial_{j_{\ell}}\partial_{j_{\ell+1}}(x_{i_1^\ell}\cdots x_{i_{p_\ell}^\ell}) 
\Big).
\label{eq:tr expansion}
\end{align}
Hence, the left-hand side of \eqref{eq:H and G are close} is bounded by
\begin{align}
\frac{1}{N^{1+(\abs{\bm{p}}-k)/2}}
\sum_{j_1,\ldots,j_k=1}^N 
\sum_{\bm{i}\in\llbracket 1,N\rrbracket^{\bm{p}}}
 D_{\bm{i}}
\prod_{\ell=1}^k \big|\partial_{j_{\ell}}\partial_{j_{\ell+1}}(x_{i_1^\ell}\cdots x_{i_{p_\ell}^\ell}) \big|
,
\label{eq:tr diff}
\end{align}
where the index $\bm{i}$ runs over $\llbracket 1,N\rrbracket^{p_1}\times\cdots\times \llbracket 1,N\rrbracket^{p_k}$ and we define 
\begin{align*}
    D_{\bm i}:=  
\big|\E
J_{i_1^1,\ldots,i^1_{p_1}}^{(p_1)}\cdots J_{i_1^k,\ldots,i^k_{p_k}}^{(p_k)} -\E \tilde J_{i_1^1,\ldots,i^1_{p_1}}^{(p_1)}\cdots \tilde J_{i_1^k,\ldots,i^k_{p_k}}^{(p_k)}
\big|
.
\end{align*}
To bound \eqref{eq:tr diff}, for each $\cP$ being a partition on $\llbracket 1,k\rrbracket$,
define the partial sum $\tilde{S}(\cP)=\tilde{S}_N(\cP)$ by
\begin{align}\label{eq:1term}
\tilde{S}(\cP):=\frac{1}{N^{1+(\abs{\bm{p}}-k)/2}}
\sum_{j_1,\ldots,j_k=1}^N 
\sum_{\bm{i}\in \cA(\cP)}
D_{\bm i}
\prod_{\ell=1}^k \big|\partial_{j_{\ell}}\partial_{j_{\ell+1}}(x_{i_1^\ell}\cdots x_{i_{p_\ell}^\ell}) 
\big|,
\end{align}
where $\cA(\cP)$ was introduced in the statement of the lemma. 

Note that for each $\bm{i}\in\llbracket 1,N\rrbracket^{\bm{p}}$ in \eqref{eq:tr diff} there exists  at least one $\cP$ such that $\bm i\in \cA(\cP)$ (more than one in cases that there is $\cP'$ such that $\cP$ is a refinement of $\cP'$). 
Hence,  \eqref{eq:tr diff} is bounded by 
\begin{align}
\sum_{\cP:\text{partition on $\llbracket 1,k\rrbracket$}} \tilde{S}(\cP). \label{eq:sum SP}
\end{align}

Recall that the variables $J^{(m)}_{i_1,\ldots,i_{p_m}}$ are assumed to be centered with variance $1$. Hence, by independence, suppose that $\cP$ is a partition on $\llbracket 1,k\rrbracket$ such that either there is $P\in \cP$ such that $|P|=1$ or $|P|=2$ for all $P\in\cP$. Then, $D_{\bm{i}}=0$ for all $\bm{i}\in\cA(\cP)$ which yields $\tilde{S}(\cP)=0$. Hence, we may assume that the summation in \eqref{eq:sum SP} runs over partitions $\cP$ on $\llbracket 1,k\rrbracket$ such that $\min_{P\in\cP} |P|\geq 2$ and  $\max_{P\in\cP}|P|>2$. 
Moreover, by Condition~\hyperlink{ass:M}{(M)}, $D_{\bm i}\leq C_2(p,k,K)$ for
appropriate $C_2(p,k,K)>0$ which yields
\begin{align*}
    \tilde{S}(\cP) \leq C_2(p,k,K) S(\cP), 
\end{align*}
where $S(\cP)$ was introduced in the statement of the lemma. 
Hence, \eqref{eq:tr diff} is bounded from above by
\begin{align*}
 C_2(p,k,K)
        \sum_{
        \substack{
        \cP:
        \text{partition on $\llbracket 1,k\rrbracket$},
        \\
        \min_{P\in\cP}|P|\geq 2,\,
        \max_{P\in\cP}|P|>2
        } 
        }
        S(\cP),
\end{align*}
and the proof is completed.
\end{proof}

By Lemma~\ref{lem:first reduction} and the fact that the number of partitions $\cP$ is finite and independent of $N$, to prove \eqref{eq:H and G are close} it is enough to show that there exists a constant $C_3(p,k)>0$ such that for any fixed $\cP$ with $\min_{P\in\cP}|P|\geq 2$ and  $\max_{P\in\cP}|P|>2$, 
\begin{equation}
    S(\cP)\leq \frac{C_3(p,k)}{N}.\label{eq:Sdiff}
\end{equation}
Henceforth, we fix the choice of such a partition $\cP$ and prove \eqref{eq:Sdiff}. Fixing $\bm{j}=(j_1,\ldots,j_k)\in \llbracket 1,N\rrbracket^k$ and $\bm{i}\in\cA(\cP)$, we want to estimate the product 
\begin{align*}
    \prod_{\ell=1}^k
    \abs*{
    \partial_{j_\ell}\partial_{j_{\ell+1}}(x_{i_1^\ell}\cdots x_{i_{p_\ell}^\ell})
    }
\end{align*}
appeared in $S(\cP)$. The product rule for derivatives and the triangle inequality yield
\begin{align}
    \prod_{\ell=1}^k
    \abs*{
    \partial_{j_\ell}\partial_{j_{\ell+1}}(x_{i_1^\ell}\cdots x_{i_{p_\ell}^\ell})
    }
    &\leq
    \prod_{\ell=1}^k
    \sum_{
    \substack{
    I(\ell,1),I(\ell,2)\in\llbracket 1,p_\ell\rrbracket :
    \\
    I(\ell,1)\neq I(\ell,2)
    }
    }
    \Ind_{\{j_\ell = i^\ell_{I(\ell,1)},\, j_{\ell+1} = i^\ell_{I(\ell,2)}\}}
    \bigg(
    \prod_{
    \substack{
    r\in\llbracket 1,p_\ell\rrbracket : 
    \\
    r\neq I(\ell,1),I(\ell,2)
    }
    }
    \abs*{
    x_{i_r^\ell}
    }
    \bigg) 
    \nonumber \\
    &\eqqcolon 
    \prod_{\ell=1}^k 
    \sum_{
    \substack{
    I(\ell,1),I(\ell,2)\in\llbracket 1,p_\ell\rrbracket :
    \\
    I(\ell,1)\neq I(\ell,2)
    }
    }
    F(\ell,I(\ell,1),I(\ell,2)).
    \label{eq:F_lI}
\end{align}

Denote by $\mathcal{I}_k$ the collection of mappings $I:\llbracket 1,k\rrbracket\times \{1,2\}\to \llbracket 1,N\rrbracket$ such that 
for all $\ell\in\llbracket 1,k\rrbracket$, $I(\ell,1)\neq I(\ell,2)$ and $I(\ell,1),I(\ell,2)\in\llbracket 1,p_\ell\rrbracket$.
By interchanging the product and the sum in \eqref{eq:F_lI}, we have
\begin{align}
    \eqref{eq:F_lI}
    =
    \sum_{I\in\mathcal{I}_k}
    \prod_{\ell=1}^k
    F(\ell,I(\ell,1),I(\ell,2)). \label{eq:prod F_lI}
\end{align}
Note that the cardinality of $\mathcal{I}_k$ is finite and independent of $N$. Thus, to prove \eqref{eq:Sdiff}, it suffices to show that there exists a constant $C_4(p,k)>0$ such that for any $I\in\mathcal{I}_{k}$,
\begin{align}
    S(\cP,I)
    \coloneqq 
    \frac{1}{N^{1+(\abs{\bm{p}}-k)/2}}
    \sum_{j_1,\ldots,j_k=1}^N 
    \sum_{\bm{i}\in\cA(\cP)}
    \prod_{\ell=1}^k
    F(\ell,I(\ell,1),I(\ell,2))
    \leq 
    \frac{C_4(p,k)}{N}. \label{eq:S_PI}
\end{align}
As such, we proceed with proving \eqref{eq:S_PI} with $I\in\mathcal{I}_k$ fixed. The product in \eqref{eq:prod F_lI} factorizes into
\begin{align}
    \prod_{\ell=1}^k
    F(\ell,I(\ell,1),I(\ell,2))
    =
    \bigg(
    \prod_{\ell=1}^k
    \Ind_{\{j_\ell = i^\ell_{I(\ell,1)},\, j_{\ell+1}
    = i^\ell_{I(\ell,2)}\}}
    \bigg)
    \bigg(
    \prod_{\ell=1}^k
    \prod_{
    \substack{
    r\in\llbracket 1,p_\ell\rrbracket : 
    \\
    r\neq I(\ell,1),I(\ell,2)
    }
    }
    \abs*{
    x_{i_r^\ell}
    }
    \bigg).
    \label{eq:F factors}
\end{align}
We define the indicator set
\begin{align*}
    \mathcal{B}(\bm{j},I)
    =
    \bigcap_{\ell=1}^k
    \mathcal{B}(\bm{j},\ell,I)
    =
    \bigcap_{\ell=1}^k
    \big\{
    \bm{i}\in\mathcal{A}(\cP):
    \text{
    $j_\ell = i^\ell_{I(\ell,1)}$ and $j_{\ell+1}
    = i^\ell_{I(\ell,2)}$
    }
    \big\}.
\end{align*}
Then the first term of \eqref{eq:F factors} equals $\Ind_{\bm{i}\in\mathcal{B}(\bm{j},I)}$. Moreover, introducing 
\begin{align}
    d(r,\ell,I) = 
    \begin{cases}
        1, &  r\neq I(\ell,1),I(\ell,2), \\
        0, &  \text{otherwise},
    \end{cases} \label{eq:degree}
\end{align}
the second term of \eqref{eq:F factors} equals 
\begin{align*}
    \prod_{\ell=1}^k \prod_{r=1}^{p_\ell} \abs*{x_{i^\ell_r}}^{d(r,\ell,I)}.
\end{align*}
Therefore, plugging the expressions back to \eqref{eq:F factors} and then applying \eqref{eq:F factors} yield
\begin{align*}
    S(\cP,I)
    =
    \frac{1}{N^{1+(\abs{\bm{p}}-k)/2}}
    \sum_{j_1,\ldots,j_k=1}^N 
    \sum_{\bm{i}\in\mathcal{B}(\bm{j},I)}
    \prod_{\ell=1}^k \prod_{r=1}^{p_\ell} \abs*{x_{i^\ell_r}}^{d(r,\ell,I)} 
    \eqqcolon
    \frac{1}{N^{1+(\abs{\bm{p}}-k)/2}}
    \sum_{\bm{j}\in\llbracket 1, N\rrbracket^k} T(\bm{j},\cP,I).
\end{align*}
To prove \eqref{eq:S_PI}, we claim that for any $I\in\mathcal{I}_k$, 
\begin{align}
    \sum_{\bm{j}\in\llbracket 1, N\rrbracket^k} T(\bm{j},\cP,I)
    \leq N^{(\abs*{\bm{p}}-k)/2}.
    \label{eq:T upp bnd}
\end{align}
The proof of \eqref{eq:T upp bnd} is provided in Section~\ref{sec:proof of T upp bnd}, and it is immediate that \eqref{eq:T upp bnd} yields \eqref{eq:S_PI}.

\section{Proof of \eqref{eq:T upp bnd}} \label{sec:proof of T upp bnd}

This section aims to prove \eqref{eq:T upp bnd}. The following lemmas do this. Before we state the lemmas, we recall that $\cP$ is a partition on $\llbracket 1,k\rrbracket$ such that $\min_{P\in\cP} |P|\geq 2$ and  $\max_{P\in\cP}|P|>2$, and $I\in\mathcal{I}_k$ where $\mathcal{I}_k$ consists of $I:\llbracket 1,k\rrbracket\times \{1,2\}\to \llbracket 1,N\rrbracket$ such that 
for all $\ell\in\llbracket 1,k\rrbracket$, $I(\ell,1)\neq I(\ell,2)$ and $I(\ell,1),I(\ell,2)\in\llbracket 1,p_\ell\rrbracket$.

\begin{lemma}
    \label{lem:T upp bnd}
    We have
    \begin{align*}
        \sum_{\bm{j}\in\llbracket 1, N\rrbracket^k} T(\bm{j},\cP,I)
        \leq 
        N^{\frac{1}{2}\abs*{\bm{p}} - \frac{1}{2}\sum_{P\in\cP}\abs{P}\abs*{I(P\times\{1,2\})}}
        \prod_{Q\in\cQ_{\cP,I}}
        \sum_{j_Q\in \llbracket 1,N\rrbracket} \abs*{x_{j_Q}}^{d_Q}.
    \end{align*}
    Here, $\cQ_{\cP,I}$ is the partition on $\llbracket 1,k\rrbracket\times \{1,2\}$ defined by the relation $\mathfrak{R}_1\cup \mathfrak{R}_2$ where
    \begin{align*}
        &\mathfrak{R}_1 
        \coloneqq \bigcup_{\ell=1}^k\{((\ell+1,1),(\ell,2))\},
        \\
        &\mathfrak{R}_2
        \coloneqq
        \bigcup_{P\in\cP} 
        \Big\{
        ((\ell_1,i_1),(\ell_2,i_2)) \in (P\times\{1,2\})^2
        :
        I(\ell_1,i_1) = I(\ell_2,i_2) 
        \Big\},
    \end{align*}
    where we identify $(k+1,1)$ as $(1,1)$.
    Also, for all $Q\in \cQ_{\cP,I}$,
    \begin{align*}
        d_Q
        \coloneqq
        \sum_{P\in\cP_Q}\sum_{r\in I(Q\cap (P\times\{1,2\}))}\abs*{P} - \abs*{I^{-1}(r)\cap (P\times\{1,2\})}
    \end{align*}
    where 
    \begin{align*}
        \cP_Q = 
        \{P\in\cP : 
        Q\cap (P\times \{1,2\}) \neq \varnothing
        \}.
    \end{align*}
\end{lemma}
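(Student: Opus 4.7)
The plan is to reparametrize $\sum_{\bm{j}} T(\bm{j}, \cP, I)$ so that each free index decouples from the constrained ones, and then bound the free parts using the elementary inequality $\norm{\bm{x}}_r \leq \norm{\bm{x}} \leq \sqrt{N}$ for $r \geq 2$ (valid since $\bm{x} \in \bm{B}^N$). First, I would use that the constraint $\bm{i} \in \cA(\cP)$ forces $\bm{i}^\ell = \bm{i}^{\ell'}$ whenever $\ell, \ell'$ lie in the same block $P \in \cP$, so one may work with a single vector $\bm{i}^P = (i^P_1, \ldots, i^P_{p_P})$ per block (where $p_P$ is the common value of $p_\ell$ on $P$). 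Under this identification,
\begin{align*}
\prod_{\ell = 1}^k \prod_{r = 1}^{p_\ell} \abs{x_{i^\ell_r}}^{d(r, \ell, I)}
= \prod_{P \in \cP} \prod_{r = 1}^{p_P} \abs{x_{i^P_r}}^{\abs{P} - \abs{I^{-1}(r) \cap (P \times \{1, 2\})}},
\end{align*}
since $\sum_{\ell \in P} d(r, \ell, I) = \abs{P} - \abs{I^{-1}(r) \cap (P \times \{1, 2\})}$ by the definition of $d$ in \eqref{eq:degree}.

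Next I would analyze the constraints encoded in $\mathcal{B}(\bm{j}, I)$. Writing $\phi(\ell, 1) = j_\ell$ and $\phi(\ell, 2) = j_{\ell + 1}$ with the convention $j_{k + 1} = j_1$, the relations $j_\ell = i^\ell_{I(\ell, 1)}$ and $j_{\ell + 1} = i^\ell_{I(\ell, 2)}$ force $i^P_r = \phi(\ell, s)$ at every $(\ell, s) \in P \times \{1, 2\}$ with $I(\ell, s) = r$. The cyclic trace structure imposes $\phi(\ell + 1, 1) = \phi(\ell, 2)$, which is exactly $\mathfrak{R}_1$, while well-definedness of $i^P_r$ within a block requires $\phi(\ell_1, s_1) = \phi(\ell_2, s_2)$ whenever $I(\ell_1, s_1) = I(\ell_2, s_2)$ on $P \times \{1, 2\}$, which is exactly $\mathfrak{R}_2$. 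Consequently $\phi$ factors through the quotient by $\cQ_{\cP, I}$, valid $\bm{j}$'s correspond bijectively to tuples $(j_Q)_{Q \in \cQ_{\cP, I}} \in \llbracket 1, N \rrbracket^{\abs{\cQ_{\cP, I}}}$, and invalid $\bm{j}$'s have $\mathcal{B}(\bm{j}, I) = \varnothing$ and contribute zero. For $r \notin I(P \times \{1, 2\})$ the index $i^P_r$ is unconstrained and is summed freely over $\llbracket 1, N \rrbracket$.

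Substituting $i^P_r = j_{Q(P, r)}$ (where $Q(P, r)$ is the unique $Q \in \cQ_{\cP, I}$ containing $I^{-1}(r) \cap (P \times \{1, 2\})$) and summing the free indices yields
\begin{align*}
\sum_{\bm{j}} T(\bm{j}, \cP, I)
= \bigg(\prod_{P \in \cP} \prod_{r \notin I(P \times \{1, 2\})} \sum_{y = 1}^N \abs{x_y}^{\abs{P}}\bigg) \prod_{Q \in \cQ_{\cP, I}} \sum_{j_Q = 1}^N \abs{x_{j_Q}}^{d_Q},
\end{align*}
where gathering the contributions $\abs{P} - \abs{I^{-1}(r) \cap (P \times \{1, 2\})}$ over all $(P, r)$ with $Q(P, r) = Q$ recovers precisely the formula for $d_Q$ in the statement. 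Since $\abs{P} \geq 2$, each free sum obeys $\sum_y \abs{x_y}^{\abs{P}} = \norm{\bm{x}}_{\abs{P}}^{\abs{P}} \leq \norm{\bm{x}}^{\abs{P}} \leq N^{\abs{P}/2}$, and the product of free sums is bounded by
\begin{align*}
N^{\frac{1}{2} \sum_{P \in \cP} \abs{P} (p_P - \abs{I(P \times \{1, 2\})})}
= N^{\frac{1}{2} \abs{\bm{p}} - \frac{1}{2} \sum_{P \in \cP} \abs{P} \abs{I(P \times \{1, 2\})}},
\end{align*}
using $\sum_{P} \abs{P} p_P = \sum_\ell p_\ell = \abs{\bm{p}}$. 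This yields the required bound.

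The main obstacle is the combinatorial bookkeeping: one must verify that $\cQ_{\cP, I}$, generated by $\mathfrak{R}_1 \cup \mathfrak{R}_2$, captures exactly the identifications among the $\phi(\ell, s)$ values forced by the trace loop together with the block constraint, and that consequently each preimage $I^{-1}(r) \cap (P \times \{1, 2\})$ lies in a single $Q$-class so that $i^P_r = j_{Q(P, r)}$ is well-defined. Once this combinatorial setup is handled carefully, the exponent arithmetic and the bound $\norm{\bm{x}}_r \leq \sqrt{N}$ deliver the result immediately.
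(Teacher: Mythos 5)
Your proposal follows essentially the same strategy as the paper's proof: you reparametrize by block using $\bm{i}^\ell = \bm{i}^P$ for $\ell \in P$, split the indices into free ones ($r \notin I(P\times\{1,2\})$) and constrained ones ($r \in I(P\times\{1,2\})$), bound the free product by $N^{\frac{1}{2}\sum_P |P|(p_P - |I(P\times\{1,2\})|)}$ via $\norm{\bm{x}}_r \leq \sqrt{N}$, and recognize that the constrained indices collapse onto the classes of $\cQ_{\cP,I}$ to yield $\prod_Q \sum_{j_Q} |x_{j_Q}|^{d_Q}$; this is precisely the paper's decomposition into Terms \RomanNumeralCaps{1} and \RomanNumeralCaps{2} in \eqref{eq:factorization} together with Lemma~\ref{lem:i and j}. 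The one cosmetic difference is that you assert a bijection between good $\bm{j}$'s and tuples $(j_Q)_{Q\in\cQ_{\cP,I}}$ and hence an equality, whereas the paper contents itself with the inequality in \eqref{eq:d_Q bnd}; the bijection is in fact correct (well-definedness of $i^P_r$ within a $Q$-class follows from $\mathfrak{R}_2$, and any tuple $(j_Q)$ produces a good $\bm{j}$ via $j_\ell = j_{Q(\ell,1)}$), but only the $\leq$ direction is used, so this distinction is immaterial.
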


\begin{lemma}
\label{lem:increment of exponent}
    Define $\alpha(\cP,I)$ to be the smallest number such that for every $(x_1,\ldots,x_N)\in\bm{B}^N$,
\begin{align*}
    \prod_{Q\in \cQ_{\cP,I}}\sum_{j_Q=1}^N \abs*{x_{j_Q}}^{d_{Q}} \leq N^{-k + \frac{1}{2}\sum_{P\in\cP}\abs*{P}\abs*{I(P\times\{1,2\})} +\alpha(\cP,I)}. 
\end{align*}
Then, we have $\alpha(\cP,I)\leq \frac{k}{2}$.
\end{lemma}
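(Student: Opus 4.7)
The plan is to bound each factor $\sum_{j_Q=1}^N |x_{j_Q}|^{d_Q}$ in the product individually, then combine them via an algebraic identity for $\sum_Q d_Q$, reducing the claim to a combinatorial inequality about $\cQ_{\cP,I}$.

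First, for any $\bm{x}\in\bm{B}^N$ (so $\|\bm{x}\|_2^2\le N$), I would use three elementary bounds: $\sum_{j=1}^N 1 = N$ when $d_Q=0$; Cauchy--Schwarz $\sum_{j=1}^N |x_j| \le \sqrt{N}\|\bm{x}\|_2 \le N$ when $d_Q=1$; and $\|\bm{x}\|_{d_Q} \le \|\bm{x}\|_2$ giving $\sum_{j=1}^N |x_j|^{d_Q} \le \|\bm{x}\|_2^{d_Q} \le N^{d_Q/2}$ when $d_Q\ge 2$. These combine uniformly as $\sum_{j_Q=1}^N |x_{j_Q}|^{d_Q} \le N^{\max(d_Q,2)/2}$, so
\begin{align*}
    \prod_{Q\in\cQ_{\cP,I}} \sum_{j_Q=1}^N |x_{j_Q}|^{d_Q} \le N^{\tfrac{1}{2}\sum_Q \max(d_Q,2)}.
\end{align*}

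Next I would establish the key identity
\begin{align*}
    \sum_{Q\in\cQ_{\cP,I}} d_Q = \sum_{P\in\cP} |P|\cdot|I(P\times\{1,2\})| - 2k.
\end{align*}
Using that $\mathfrak{R}_2$ groups elements of $P\times\{1,2\}$ by $I$-value, one has $|Q\cap (P\times\{1,2\})| = \sum_{r\in I(Q\cap(P\times\{1,2\}))} |I^{-1}(r)\cap (P\times\{1,2\})|$, so $d_Q$ simplifies to $\sum_{P\in\cP_Q}|P|\cdot|I(Q\cap(P\times\{1,2\}))| - |Q|$. Summing over $Q$ and swapping order of summation: for each $P$, the subsets $\{I(Q\cap(P\times\{1,2\}))\}_{Q:P\in\cP_Q}$ partition $I(P\times\{1,2\})$ (again by $\mathfrak{R}_2$), which gives $\sum_Q |P|\cdot|I(Q\cap(P\times\{1,2\}))| = |P|\cdot|I(P\times\{1,2\})|$; together with $\sum_Q |Q|=2k$ the identity follows.

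Combining the two ingredients, the target $\alpha(\cP,I)\le k/2$ reduces to the combinatorial inequality
\begin{align*}
    \sum_{Q:d_Q\le 1}(2-d_Q) = 2N_0 + N_1 \le k, \qquad N_j \coloneqq \#\{Q\in\cQ_{\cP,I}: d_Q=j\}.
\end{align*}
This will be the main obstacle. To attack it I would interpret $\cQ_{\cP,I}$ as the set of connected components of a graph $G'$ on the $k$ vertices $\{j_1,\ldots,j_k\}$ whose edges come from $\mathfrak{R}_2$-mergings; each component $Q$ then contains $v_Q\coloneqq|Q|/2$ vertices with $\sum_Q v_Q = k$. A class with $d_Q\le 1$ satisfies $\sum_{P\in\cP_Q}|P|\cdot|I(Q\cap(P\times\{1,2\}))| \le 2v_Q+1$, which together with $|P|\ge 2$ and the constraint $I(\ell,1)\neq I(\ell,2)$ forces the elements of $Q$ to carry many $\mathfrak{R}_2$-mergings among themselves. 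A careful amortization, distinguishing the contributions of small blocks $P$ with $|P|=2$ (which can create tight ``singleton'' classes $Q$ with $v_Q=1$) from the larger blocks guaranteed by the assumption $\max_{P}|P|>2$, and tracking how the $k$ vertices of $G'$ are distributed across these tight classes versus the remaining surplus components, should yield $2N_0+N_1\le k$ and thus close the proof.
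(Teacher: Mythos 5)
Your reduction is correct and matches the paper's. You derive (i) the elementary bound $\sum_{j_Q}\abs{x_{j_Q}}^{d_Q}\le N^{\max(d_Q,2)/2}$ (the same three cases as the paper, though you should note explicitly that $d_Q\ge 0$, which follows from Lemma~\ref{lem:card}), (ii) the identity $\sum_Q d_Q=\sum_P\abs{P}\abs{I(P\times\{1,2\})}-2k$ (derived cleanly and correctly via the observation that $d_Q=\sum_{P\in\cP_Q}\abs{P}\abs{I(Q\cap(P\times\{1,2\}))}-\abs{Q}$ and that the sets $I(Q\cap(P\times\{1,2\}))$ partition $I(P\times\{1,2\})$), and (iii) the consequent reformulation $\alpha(\cP,I)=N_0+\tfrac12 N_1$, so the target becomes $2N_0+N_1\le k$.

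The genuine gap is the last step. You correctly identify $2N_0+N_1\le k$ as ``the main obstacle,'' but your proposal does not prove it; it only gestures at ``a careful amortization'' that ``should yield'' the bound. This is precisely where nearly all of the paper's work lies. The paper must show that every block of $\cQ_{\cP,I}$ has even cardinality; that the ``bad'' blocks (those with $\abs{Q}=2$ and $d_Q=0$, i.e.~$\cQ_{2,0}$) have a very constrained form $Q=\{(\ell{+}1,1),(\ell,2)\}\subseteq P_Q\times\{1,2\}$ with $P_Q=\{\ell,\ell{+}1\}\in\cP$; introduce the structural Condition~(S); perform an induction on the number of $\cQ_{2,0}$-blocks satisfying~(S), which requires retracting $\cP,I$ to $(\cP',I')$ on $\llbracket 1,k-2\rrbracket$ (Case~1) or surgically modifying $I$ (Case~2) while tracking how $d_Q$ and hence $\alpha$ changes; and, in the base case, construct an injection $\iota\colon\cQ_{2,0}\hookrightarrow\cQ_{4,1}$ to obtain $\abs{\cQ_{2,0}}\le\abs{\cQ_{4,1}}$, which is what makes the cardinality count close. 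None of these ingredients — the even-cardinality structure, the classification of $\cQ_{2,0}$, the injection, the induction, or Condition~(S) — appears in your sketch, and the intended ``amortization'' is not spelled out in a way one could check or carry out. In particular it is not clear that treating $\abs{P}=2$ blocks versus larger blocks separately, as you propose, leads anywhere without the structural lemmas; a large block $Q$ with $\abs{Q}\ge 4$ can still have $d_Q=0$, and the contribution of such blocks is not controlled by the assumption $\max_P\abs{P}>2$ in any obvious way. Until this inequality is actually proved, the lemma is not established.
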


The proofs of Lemma~\ref{lem:T upp bnd} and Lemma~\ref{lem:increment of exponent} are provided in Section~\ref{sec:proof of lem:T upp bnd} and Section~\ref{sec:increment of exponent}, respectively. We now show that Lemma~\ref{lem:T upp bnd} and Lemma~\ref{lem:increment of exponent} imply \eqref{eq:T upp bnd}.

\begin{proof}[Proof of \eqref{eq:T upp bnd}]
    By Lemma~\ref{lem:T upp bnd} and Lemma~\ref{lem:increment of exponent}, we have
    \begin{align*}
        \sum_{\bm{j}\in\llbracket 1, N\rrbracket^k} T(\bm{j},\cP,I)
        \leq 
        N^{(\abs*{\bm{p}}-k)/2},
    \end{align*}
    which completes the proof.
\end{proof}

\subsection{Proof of Lemma~\ref{lem:T upp bnd}}
\label{sec:proof of lem:T upp bnd}

Fixing $\bm{j}\in \llbracket 1, N\rrbracket^k$, $\bm{i}\in \mathcal{B}(\bm{j},I)$ and $P\in\cP$, we can identify $p_\ell = p_P$ and $\bm{i}^\ell = \bm{i}^P$ for all $\ell\in P$. Thus, by interchanging the products $\prod_{\ell\in P}$ and $\prod_{r=1}^{p_P}$, we obtain 
\begin{align}
    \prod_{\ell=1}^k 
    \prod_{r=1}^{p_\ell} 
    \abs*{x_{i^\ell_r}}^{d(r,\ell,I)}
    =
    \prod_{P\in\cP}
    \prod_{\ell\in P}
    \prod_{r=1}^{p_P}
    \abs*{x_{i^P_r}}^{d(r,\ell,I)}
    =
    \prod_{P\in\cP}
    \prod_{r=1}^{p_P}
    \abs*{x_{i^P_r}}^{\sum_{\ell\in P} d(r,\ell,I)}. \label{eq:prod x}
\end{align}
For $\bm{i}\in\mathcal{B}(\bm{j},I)$, we introduce the decomposition $\bm{i} = (\bm{i}_1,\bm{i}_2)$ where $\bm{i}_1 = (i^P_r)_{r\in I(P\times\{1,2\}), P\in\cP}$ and $\bm{i}_2 = (i^P_r)_{r\not\in I(P\times\{1,2\}),P\in\cP}$. Then by \eqref{eq:prod x}, $T(\bm{j},\cP,I)$ admits the factorization
\begin{align}
    &T(\bm{j},\cP,I)
    \nonumber \\
    &=
    \sum_{(\bm{i}_1,\bm{i}_2)\in \mathcal{B}(\bm{j},I)}
    \bigg(
    \prod_{P\in\cP}
    \prod_{r\in I(P\times \{1,2\})}
    \abs*{x_{i^P_r}}^{\sum_{\ell\in P} d(r,\ell,I)}
    \bigg)
    \bigg(
    \prod_{P\in\cP}
    \prod_{r\notin I(P\times \{1,2\})}
    \abs*{x_{i^P_r}}^{\sum_{\ell\in P} d(r,\ell,I)}
    \bigg) 
    \nonumber \\
    &\eqqcolon
    \sum_{(\bm{i}_1,\bm{i}_2)\in \mathcal{B}(\bm{j},I)}
    T_1(\mathcal{P},I,\bm{i}_1) 
    \cdot
    T_2(\mathcal{P},I,\bm{i}_2). \label{eq:T factors}
\end{align}
Denoting by $I^{-1}(r)_1$ and $I^{-1}(r)_2$ the first and second coordinates of $I^{-1}(r)$, respectively, define
\begin{align*}
        \mathcal{B}_1(\bm{j},\cP,I)
        =
        \Big\{
    \bm{i}_1:
    \text{ 
    $i^P_r\in \llbracket 1,N\rrbracket$, 
    $i^P_r = j_{I^{-1}(r)_1+I^{-1}(r)_2-1}$ 
    for all $r\in I(P\times\{1,2\})$ and $P\in\cP$
    }
    \Big\}.
    \end{align*}
Then, $\bm{i} = (\bm{i}_1,\bm{i_2})\in \mathcal{B}(\bm{j},I)$ if and only if $\bm{i}_1\in \mathcal{B}_1(\bm{j},\cP,I)$. Applying this to \eqref{eq:T factors}, $T(\bm{j},\cP,I)$ can be further factorized into 
\begin{align}
     T(\bm{j},\cP,I)
     =
     \bigg(
     \sum_{\bm{i}_1\in\mathcal{B}_1(\bm{j},\mathcal{P},I)} T_1(\mathcal{P},I,\bm{i}_1)
     \bigg)
     \bigg(
    \prod_{P\in\cP}
    \prod_{r\notin I(P\times \{1,2\})}
     \sum_{\bm{i}_r^P=1}^N \abs*{x_{i^P_r}}^{\sum_{\ell\in P} d(r,\ell,I)}
     \bigg) 
     \eqqcolon
     \text{\RomanNumeralCaps 1} 
        \cdot
        \text{\RomanNumeralCaps 2}
    .\label{eq:factorization}
\end{align}
    We now treat \RomanNumeralCaps 1 and \RomanNumeralCaps 2 in \eqref{eq:factorization} separately. Before proceeding, we note that by the definition of $d(r,\ell,I)$ provided in \eqref{eq:degree} and the assumption that $I(\ell,1)\neq I(\ell,2)$ for all $\ell\in\llbracket 1,k\rrbracket$, we see that 
    \begin{align}
        d(r,\ell,I) = 
        1
        -
        \abs*{I^{-1}(r)\cap \{(\ell,1),(\ell,2)\}}. 
        \label{eq:d(r,l,I)}
    \end{align}
    We now proceed with \RomanNumeralCaps 2. 
    \paragraph{\RomanNumeralCaps 2 in \eqref{eq:factorization}.}
    Since $r\notin I(P\times \{1,2\})$ yields $\abs*{I^{-1}(r)\cap \{(\ell,1),(\ell,2)\}} = 0$ for $\ell\in P$, by \eqref{eq:d(r,l,I)},
    \begin{align*}
        \sum_{\ell\in P} d(r,\ell,I)
        =\abs{P}.
    \end{align*}
    Moreover, by the assumption that $\abs{P}\geq 2$ and the fact that $\norm{\cdot}_{s}\leq \norm{\cdot}_{2}$ for all $s\geq 2$, interchanging the product and the sum of {\RomanNumeralCaps 2} in \eqref{eq:factorization} yields
    \begin{align}
        \text{
        \RomanNumeralCaps 2 in \eqref{eq:factorization}
        }
        =
        \prod_{P\in\cP}
        \prod_{r\not\in I(P\times\{1,2\})}
        \sum_{i^P_r=1}^N
        \abs*{
    x_{i_r^P}
    }^{\abs{P}}
        \leq 
        N^{\frac{1}{2}\sum_{P\in\cP}\abs{P}(p_P - \abs{I(P\times\{1,2\})})}, \nonumber 
    \end{align}
    and because $\sum_{P\in\cP}\abs*{P}p_P = \abs*{\bm{p}}$, the upper bound above yields
    \begin{align}
        \text{
        \RomanNumeralCaps 2 in \eqref{eq:factorization}
        }
        \leq
        N^{\frac{1}{2}\abs*{\bm{p}} - \frac{1}{2}\sum_{P\in\cP}\abs{P}\abs{I(P\times\{1,2\})}}. \label{eq:factorization.2}
    \end{align}

    \paragraph{\RomanNumeralCaps 1 in \eqref{eq:factorization}.}
    Substituting $d(r,\ell,I)$ with \eqref{eq:d(r,l,I)},
    \begin{align}
        \text{\RomanNumeralCaps 1 in \eqref{eq:factorization}}
        &=
        \sum_{\bm{i}_1\in \mathcal{B}_1(\bm{j},\cP,I)}
        \prod_{P\in\cP}
        \prod_{r\in I(P\times\{1,2\})}
        \abs*{
    x_{i_r^P}
    }^{\abs*{P}-\abs*{I^{-1}(r)\cap (P\times\{1,2\})}}.
        \label{eq:factorization.1}
    \end{align}
    Thus, it remains to study the identification between the indices $\bm{i}_1$ and $\bm{j}$. To do so, we introduce $\bm{j}_1 = (j_{1,1},\ldots,j_{k,1})\in\llbracket 1,N\rrbracket^k$ and $\bm{j}_2=(j_{1,2},\ldots,j_{k,2})\in\llbracket 1,N\rrbracket^k$ which are duplications of $\bm{j}$.
    We define the indicator set
    \begin{align*}
    \mathcal{J}
        &= \big\{(\bm{j}_1,\bm{j_2}) : \text{for all $\ell\in\llbracket 1,k\rrbracket$, $j_{\ell+1,1}=j_{\ell,2}$} \big\},
    \end{align*}
    with the convention that $j_{k+1,1}=j_{1,1}$. Then, for all $\bm{j}\in\llbracket 1,N\rrbracket^k$ and $(\bm{j}_1,\bm{j}_2)\in\mathcal{B}_1$ with $\bm{j}=\bm{j}_1$, we have 
    \begin{align*}
        \mathcal{B}_1(\bm{j},\cP,I)
        &=
        \Big\{
        \bm{i}_1:
        \text{ 
        $i^P_r\in \llbracket 1,N\rrbracket$, 
        $i^P_r = j_{I^{-1}(r)_1+I^{-1}(r)_2-1}$ 
        for all $r\in I(P\times\{1,2\})$ and $P\in\cP$
        }
        \Big\}
        \\
        &=
        \Big\{
        \bm{i}_1:
        \text{
        $i^P_r\in \llbracket 1,N\rrbracket$,
        $i^P_r = j_{I^{-1}(r)}$
        for all $r\in I(P\times\{1,2\})$ and $P\in\cP$
        }
        \Big\}
        \\
        &\eqqcolon
        \mathcal{B}_1((\bm{j}_1,\bm{j}_2),\cP,I).
    \end{align*}
    We now state a lemma to identify the indices in $\mathcal{J}$ and $\mathcal{B}_1((\bm{j}_1,\bm{j}_2),\cP,I)$.
    \begin{lemma}
    \label{lem:i and j}
    Let $(\bm{j}_1,\bm{j}_2)\in\mathcal{J}$ and $\bm{i}_1\in \mathcal{B}_1((\bm{j}_1,\bm{j}_2),\cP,I)$. 
    Then, the following statements hold.
        \begin{enumerate}
        \item If $(\ell_1,i_1),(\ell_2,i_2)\in Q\in \cQ_{\cP,I}$, then $j_{\ell_1,i_1}=j_{\ell_2,i_2}$.
        \item For all $P\in\cP$ and $Q\in\cQ_{\cP,I}$, if $r\in I((P\times \{1,2\})\cap Q)$, then $i^P_r = j_{\ell,m}$ for all $(\ell,m)\in Q$.
        \end{enumerate}
    \end{lemma}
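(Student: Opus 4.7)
The plan is to read $\cQ_{\cP,I}$ as the equivalence relation on $\llbracket 1,k\rrbracket\times\{1,2\}$ generated by $\mathfrak{R}_1 \cup \mathfrak{R}_2$, so that any two elements lying in a common block $Q$ are connected by a finite chain in which each consecutive pair belongs to $\mathfrak{R}_1$ or to $\mathfrak{R}_2$ (taken symmetrically). Since equality of $j$-values is transitive, to establish (1) it suffices to verify the claim for a single chain-step of each type.

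For a step coming from $\mathfrak{R}_1$, the pair has the form $((\ell+1,1),(\ell,2))$, and the equality $j_{\ell+1,1} = j_{\ell,2}$ is literally the defining condition of $\mathcal{J}$. For a step coming from $\mathfrak{R}_2$, both endpoints $(\ell_1,i_1),(\ell_2,i_2)$ lie in $P\times\{1,2\}$ for some $P\in\cP$ with a common image $r := I(\ell_1,i_1) = I(\ell_2,i_2) \in I(P\times\{1,2\})$; here one appeals to $\bm{i}_1\in\mathcal{B}_1((\bm{j}_1,\bm{j}_2),\cP,I)$. Reading the condition $i^P_r = j_{I^{-1}(r)}$ as one equation for each preimage of $r$ in $P\times\{1,2\}$---which is the correct reading of the original formula $i^P_r = j_{I^{-1}(r)_1 + I^{-1}(r)_2 - 1}$ when $I$ fails to be injective on $P\times\{1,2\}$---forces $j_{\ell_1,i_1} = i^P_r = j_{\ell_2,i_2}$. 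Concatenating these steps along the chain yields (1).

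Statement (2) then follows at once. Fix $P\in\cP$, $Q\in\cQ_{\cP,I}$, and $r\in I((P\times\{1,2\})\cap Q)$, and pick any $(\ell^*,m^*)\in (P\times\{1,2\})\cap Q$ with $I(\ell^*,m^*)=r$. The definition of $\mathcal{B}_1$ gives $i^P_r = j_{\ell^*,m^*}$, and part (1) applied to the block $Q$ upgrades this to $i^P_r = j_{\ell,m}$ for every $(\ell,m)\in Q$.

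I do not anticipate a real obstacle: the lemma is essentially a bookkeeping statement dictated by the way $\cQ_{\cP,I}$ was constructed. The only subtle point is the reinterpretation of the single symbolic equation $i^P_r = j_{I^{-1}(r)}$ as a whole family of equations when $I$ is not injective on $P\times\{1,2\}$; this interpretation is precisely what motivated the definition of the relation $\mathfrak{R}_2$ in the first place, so once one adopts it, the proof is immediate.
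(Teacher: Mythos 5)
Your proof is correct and follows essentially the same route as the paper: check the generating relations $\mathfrak{R}_1$ and $\mathfrak{R}_2$ individually (the former via the definition of $\mathcal{J}$, the latter via the $\mathcal{B}_1$ condition $j_{\ell,m}=i^P_{I(\ell,m)}$), then conclude (1) by transitivity and (2) by picking a representative and invoking (1). You are slightly more explicit than the paper about the chain/transitive-closure step and about reading the symbolic condition $i^P_r = j_{I^{-1}(r)}$ as a family of equations indexed by the preimages of $r$, but the underlying argument is identical.
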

    \begin{proof}[Proof of Lemma~\ref{lem:i and j}]
        We start with the proof of the first statement. 
        If suffices to check for the pairs $(\ell_1,i_1),(\ell_2,i_2)\in\mathfrak{R}_1\cup \mathfrak{R}_2$. If $(\ell_1,i_1),(\ell_2,i_2)\in \mathfrak{R}_1$, then the first statement follows from the assumption that $(\bm{j}_1,\bm{j}_2)\in\mathcal{J}$. If $(\ell_1,i_1),(\ell_2,i_2)\in \mathfrak{R}_2$, then we have $\ell_1,\ell_2\in P$ for some $P\in\cP$ and  $I(\ell_1,i_1)=I(\ell_2,i_2)$, and these yield
        \begin{align*}
            j_{\ell_1,i_1} = i^P_{I(\ell_1,i_1)} = i^P_{I(\ell_2,i_2)} = j_{\ell_2,i_2}.
        \end{align*}
        It remains to prove the second statement. Fixing $P\in\cP$ and $Q\in\cQ_{\cP,I}$, for all $r\in I((P\times \{1,2\})\cap Q)$, there exists $(\ell,m)\in (P\times \{1,2\})\cap Q$ such that $r=I(\ell,m)$, so $i^P_r = i^{P}_{I(\ell,m)} = j_{\ell,m}$. Then the second statement follows from the first statement. 
    \end{proof}
    Lemma~\ref{lem:i and j} yields that if $(\bm{j}_1,\bm{j}_2)\in\mathcal{J}$ and $\bm{i}_1\in \mathcal{B}_1((\bm{j}_1,\bm{j}_2),\cP,I)$, then we can identify the indices $j_{\ell,i} = j_Q$, for $(\ell,i)\in Q\in \cQ_{\cP,I}$,  and $i_r^P = j_Q$, for $r\in I((P\times\{1,2\})\cap Q)$ with $P\in \cP$ and $Q\in\cQ_{\cP,I}$. Combining this with \eqref{eq:factorization.2} then yields
    \begin{align}
        \sum_{\bm{j}\in\llbracket 1,N\rrbracket^k}
        (
        \text{\RomanNumeralCaps 1 in \eqref{eq:factorization}}
        )
        &\leq 
        \sum_{j_Q\in\llbracket 1,N\rrbracket:Q\in\cQ_{\cP,I}}
        \prod_{Q\in\cQ_{\cP,I}}
        \prod_{P\in\cP}
        \prod_{r\in I((P\times\{1,2\})\cap Q)}
        \abs*{
    x_{j_Q}
    }^{\abs*{P}-\abs*{I^{-1}(r)\cap (P\times\{1,2\})}} 
    \nonumber \\
        &=
        \prod_{Q\in\cQ_{\cP,I}}
        \sum_{j_Q=1}^N
        \abs*{x_{j_Q}}^{d_Q},
        \label{eq:d_Q bnd}
    \end{align}
    where \eqref{eq:d_Q bnd} is derived from interchanging the sum $\sum_{j_Q\in\llbracket 1,N\rrbracket:Q\in\cQ_{\cP,I}}$ and the product $\prod_{Q\in\cQ_{\cP,I}}$ and then apply the definition of $d_Q$ given in the statement of Lemma~\ref{lem:T upp bnd}.

    Finally, combining \eqref{eq:factorization.2} and \eqref{eq:d_Q bnd} yields the desired upper bound for $\sum_{\bm{j}\in\llbracket 1, N\rrbracket^k} T(\bm{j},\cP,I)$.

\subsection{Proof of Lemma~\ref{lem:increment of exponent}} \label{sec:increment of exponent}
For any $Q\in\cQ_{\cP,I}$, the following lemma implies that $d_Q \geq 0$. 
\begin{lemma}
\label{lem:card}
Let $\cP$ be a partition on $\llbracket 1,k\rrbracket$ and $I\in\mathcal{I}_k$. Then, for all $P\in\cP$ and $r\in\llbracket 1,p_P\rrbracket$,  we have $\abs*{I^{-1}(r)\cap (P\times\{1,2\})}\leq \abs{P}$.
\end{lemma}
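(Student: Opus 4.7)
The plan is to prove the bound by partitioning $I^{-1}(r) \cap (P\times\{1,2\})$ along the first coordinate $\ell \in P$ and using the non-degeneracy assumption $I(\ell,1) \neq I(\ell,2)$ that is built into the definition of $\mathcal{I}_k$.

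More concretely, fix $P \in \cP$ and $r \in \llbracket 1, p_P \rrbracket$. Writing
\begin{align*}
    I^{-1}(r) \cap (P\times\{1,2\})
    = \bigsqcup_{\ell \in P} \Big(I^{-1}(r) \cap (\{\ell\}\times\{1,2\})\Big),
\end{align*}
the cardinality decomposes as a sum over $\ell \in P$. For each individual $\ell$, the set $I^{-1}(r) \cap (\{\ell\}\times\{1,2\})$ is a subset of the two-element set $\{(\ell,1),(\ell,2)\}$, but since $I(\ell,1) \neq I(\ell,2)$ by the defining property of $\mathcal{I}_k$, at most one of these two elements can satisfy $I(\ell,m) = r$. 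Hence each term in the disjoint union has cardinality at most $1$, and the sum over $\ell \in P$ contributes at most $|P|$.

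The main point to verify is simply that the non-equality $I(\ell,1)\neq I(\ell,2)$, enforced in the definition of $\mathcal{I}_k$, is precisely what prevents both elements of the fiber over $\ell$ from being counted. There is no real obstacle here since the statement is a direct combinatorial consequence of the injectivity of $I$ restricted to $\{\ell\}\times\{1,2\}$, so the proof is essentially a two-line pigeonhole argument.
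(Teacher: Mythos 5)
Your proof is correct and follows essentially the same approach as the paper: decompose $I^{-1}(r)\cap(P\times\{1,2\})$ by the first coordinate $\ell\in P$, use $I(\ell,1)\neq I(\ell,2)$ to bound each fiber by one, and sum over $P$.
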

\begin{proof}[Proof of Lemma~\ref{lem:card}]
Fix $P\in\cP$ and $r\in\llbracket 1,p_P\rrbracket $. Then, by the assumption that $I(\ell,1)\neq I(\ell,2)$ in the beginning of Section~\ref{sec:proof of T upp bnd}, we have $\abs{I^{-1}(r)\cap \{(\ell,1),(\ell,2)\}}\leq 1$. Thus,
\begin{align*}
    \abs*{I^{-1}(r)}
    = \sum_{\ell\in\cP} \abs*{I^{-1}(r)\cap \{(\ell,1),(\ell,2)\}}
    \leq \abs{P}, 
\end{align*}
and the proof is completed.
\end{proof}
Thanks to Lemma~\ref{lem:card}, 
\begin{align*}
    d_Q = \sum_{P\in\cP_Q}\sum_{r\in I(Q\cap (P\times\{1,2\}))}\abs*{P} - \abs*{I^{-1}(r)\cap (P\times\{1,2\})} \geq 0.
\end{align*}
Thus, there are only three possible cases of $d_Q$.
\begin{enumerate}
    \item If $d_Q\geq 2$, by the fact that $\norm{\cdot}_{d_Q}\leq \norm{\cdot}_{2}$, we have $\sum_{j_Q=1}^N \abs*{x_{j_Q}}^{d_Q}\leq N^{d_Q/2}$. Moreover, the equality is attained by taking $x_1=N$ and $x_i=0$ for all $i=2,\ldots,N$.
    \item If $d_Q=1$, then by the Cauchy--Schwarz inequality we have $\sum_{j_Q=1}^N \abs*{x_{j_Q}}^{d_Q}\leq N$. Moreover, the equality is attained by taking $x_i=1$ for all $i=1,\ldots,N$.
    \item If $d_Q=0$, we have $\sum_{j_Q=1}^N \abs*{x_{j_Q}}^{d_Q}=N$.
\end{enumerate}
Therefore, we have 
\begin{align}
    \alpha(\cP,I) = \sum_{Q\in\cQ_{\cP,I}} \left(\max\left\{\frac{d_Q}{2},1\right\} - \frac{d_Q}{2}\right) = \sum_{Q\in\cQ_{\cP,I}} \max\left\{0,1- \frac{d_Q}{2}\right\}. \label{eq:alpha(P,I)}
\end{align}
On the other hand, defining the meet of $\cQ_{\cP,I}$ and $\cP\times\{1,2\}$ by
\begin{align*}
    \cQ_{\cP,I}\wedge(\cP\times\{1,2\}) = 
    \{Q\cap (P\times\{1,2\})
    :
    Q\in\cQ_{\cP,I},
    P\in\cP,
    Q\cap (P\times\{1,2\})\neq \varnothing
    \},
\end{align*}
we see that the summation 
\begin{align*}
I(Q\cap (P\times\{1,2\}))
=
\sum_{r\in I(Q\cap (P\times\{1,2\}))}
\abs*{P} 
-
\abs*{I^{-1}(r)\cap (P\times\{1,2\})}
\end{align*}
depends only on the choice of $Q\cap (P\times\{1,2\})\in \cQ_{\cP,I}\wedge(\cP\times\{1,2\})$.
Moreover, defining $\cQ_P = \{Q\in\cQ_{\cP,I}: Q\cap (P\times\{1,2\})\neq \varnothing\}$,
\begin{align*}
    \sum_{Q\in \cQ_P} I(Q\cap (P\times\{1,2\}))
    &=
    \sum_{r\in I(P\times\{1,2\})}
    \abs*{P} 
-
\abs*{I^{-1}(r)\cap (P\times\{1,2\})}
    \nonumber \\
    &=
    \abs*{P}\abs*{I(P\times\{1,2\})}
    -
    \abs*{P\times\{1,2\}}.
\end{align*}
Thus, the sum of $d_Q$ over $Q\in\cQ_{\cP,I}$ equals 
\begin{align}
\sum_{Q\in\cQ_{\cP,I}} d_Q
&= 
\sum_{Q\in\cQ_{\cP,I}}
\sum_{P\in\cP_Q} 
I(Q\cap (P\times\{1,2\}))
\nonumber \\
&=
\sum_{Q\cap P\times\{1,2\}\in \cQ_{\cP,I}\wedge(\cP\times\{1,2\})}
I(Q\cap (P\times\{1,2\}))
\nonumber \\
&=
\sum_{P\in\cP}
\sum_{Q\in\cQ_P}
I(Q\cap (P\times\{1,2\}))
=
-2k + \sum_{P\in\cP}\abs*{P}\abs*{I(P\times\{1,2\})}. \nonumber
\end{align}
Hence,
\begin{align*}
    \prod_{Q\in \cQ_{\cP,I}}\sum_{j_Q=1}^N \abs*{x_{j_Q}}^{d_{Q}}
    \leq N^{\frac{1}{2}\sum_{Q\in\cQ_{\cP,I}} d_Q + \alpha(\cP,I)}
    = N^{-k + \frac{1}{2}\sum_{P\in\cP}\abs*{P}\abs*{I(P\times\{1,2\})} + \alpha(\cP,I)}.
\end{align*}

\subsubsection{Estimate of \texorpdfstring{$\alpha(\cP,I)$}{a(P,I)}}

We now want to prove that for any partition $\cP$ on $\llbracket 1,k\rrbracket$ and $I\in\mathcal{I}_k$,
\begin{align}
    \alpha(\cP,I) \leq \frac{k}{2} \label{eq:a(P,I) bnd}.
\end{align}
 We start with the following lemma that gives a first bound on $\alpha(\cP,I)$ via a partition of $\cQ_{\cP,I}$. Note that the lemma holds for any partition $\cP$ on $\llbracket 1,k\rrbracket$ and $I\in\mathcal{I}_k$. 
\begin{lemma}
    \label{lem:simple claim.1}
    For any partition $\cP$ on $\llbracket 1,k\rrbracket$ and $I\in\mathcal{I}_k$, every block in $\cQ_{\cP,I}$ has even cardinality. In particular, this implies that $\cQ_{\cP,I}$ admits a partition $\cQ_{\cP,I} = \cQ_{2,0}\cup \cQ_{2,1}\cup \cQ_{4,0}\cup \cQ_{4,1}$ where 
\begin{align*}
\cQ_{2,0} &= \{Q\in\cQ_{\cP,I}:\abs{Q}=2,\, d_Q=0\}, \quad
\cQ_{2,1} = \{Q\in\cQ_{\cP,I}:\abs{Q}=2,\, d_Q\geq 1\}, \nonumber \\
\cQ_{4,0} &= \{Q\in\cQ_{\cP,I}:\abs{Q}\geq 4,\, d_Q=0 \}, \quad 
\cQ_{4,1} = \{Q\in\cQ_{\cP,I}:\abs{Q}\geq 4,\, d_Q\geq 1 \}.
\end{align*}
Moreover, the following inequalities hold:
\begin{align}
\abs{\cQ_{2,0}} + \abs{\cQ_{2,1}} + 2\abs{\cQ_{4,0}} + 2 \abs{\cQ_{4,1}}
\leq k \label{eq:2k bound}
\end{align}
and
\begin{align}
    \alpha(\cP,I)
    \leq 
    \abs{\cQ_{2,0}} + \frac{1}{2} \abs{\cQ_{2,1}} + \abs{\cQ_{4,0}} + \frac{1}{2} \abs{\cQ_{4,1}}. \label{eq:alpha(P,I).1}
\end{align}
\end{lemma}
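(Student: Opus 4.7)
The plan is to prove the lemma in three steps: first the parity of each block (which immediately gives the four-piece partition), then the cardinality bound \eqref{eq:2k bound}, and finally the $\alpha$-bound \eqref{eq:alpha(P,I).1}. The only nontrivial step is the parity; the rest is combinatorial bookkeeping.

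For the parity, I will exploit that $\mathfrak{R}_1$ is a perfect matching on $\llbracket 1,k\rrbracket\times\{1,2\}$: each element belongs to exactly one pair of the form $\{(\ell+1,1),(\ell,2)\}$ (with the cyclic convention $(k+1,1)=(1,1)$), so one can define an involution $f$ that swaps the two members of each such pair; explicitly $f(\ell,1)=(\ell-1,2)$ and $f(\ell,2)=(\ell+1,1)$. Since every $\mathfrak{R}_1$-pair is contained in a single block $Q\in\cQ_{\cP,I}$, the map $f$ restricts to an involution on $Q$ that sends $Q\cap(\llbracket 1,k\rrbracket\times\{1\})$ bijectively onto $Q\cap(\llbracket 1,k\rrbracket\times\{2\})$. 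Hence these two sets have equal cardinality and $\abs{Q}$ is even. The four-piece decomposition then follows at once: every block has size in $\{2,4,6,\ldots\}$, and by Lemma~\ref{lem:card} $d_Q$ is a nonnegative integer, so the conditions $\abs{Q}=2$ vs.\ $\abs{Q}\geq 4$ and $d_Q=0$ vs.\ $d_Q\geq 1$ yield four mutually exclusive and exhaustive cases.

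For \eqref{eq:2k bound} I will double-count: $\cQ_{\cP,I}$ partitions a set of size $2k$, so $\sum_{Q\in\cQ_{\cP,I}}\abs{Q}=2k$. Blocks in $\cQ_{2,0}\cup\cQ_{2,1}$ contribute exactly $2$ each and blocks in $\cQ_{4,0}\cup\cQ_{4,1}$ contribute at least $4$ each, giving $2(\abs{\cQ_{2,0}}+\abs{\cQ_{2,1}})+4(\abs{\cQ_{4,0}}+\abs{\cQ_{4,1}})\leq 2k$; dividing by $2$ yields \eqref{eq:2k bound}.

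For \eqref{eq:alpha(P,I).1} I recall from \eqref{eq:alpha(P,I)} that $\alpha(\cP,I)=\sum_{Q\in\cQ_{\cP,I}}\max\{0,1-d_Q/2\}$. Each summand equals $1$ when $d_Q=0$ and is at most $\tfrac12$ when $d_Q\geq 1$. Using the decompositions $\{Q:d_Q=0\}=\cQ_{2,0}\cup\cQ_{4,0}$ and $\{Q:d_Q\geq 1\}=\cQ_{2,1}\cup\cQ_{4,1}$ then gives \eqref{eq:alpha(P,I).1}. The main (and really only) obstacle is the parity claim; once the involution $f$ is written down the remaining steps are immediate counting.
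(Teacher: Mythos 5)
Your proof is correct and follows essentially the same route as the paper: use the fact that $\mathfrak{R}_1$ is a perfect matching on $\llbracket 1,k\rrbracket\times\{1,2\}$ to pair up elements within each block (yielding even cardinality), then double-count $\sum_Q\abs{Q}=2k$ for \eqref{eq:2k bound}, and split the sum in \eqref{eq:alpha(P,I)} over the four parts of $\cQ_{\cP,I}$ for \eqref{eq:alpha(P,I).1}. The only cosmetic difference is that you write out the matching involution $f$ explicitly where the paper simply cites "the bijection induced by $\mathfrak{R}_1$."
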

\begin{proof}[Proof of Lemma~\ref{lem:simple claim.1}]
    The first statement in the lemma follows from the fact that the relation $\mathfrak{R}_1$ induces a bijection between the even and odd elements in each block of $\cQ_{\cP,I}$, and the second statement simply follows. 
    Inequality \eqref{eq:2k bound} follows from the partition $\cQ_{\cP,I} = \cQ_{2,0}\cup \cQ_{2,1}\cup \cQ_{4,0}\cup \cQ_{4,1}$ and the following estimate:
    \begin{align*}
    2\abs{\cQ_{2,0}} + 2\abs{\cQ_{2,1}} + 4\abs{\cQ_{4,0}} + 4 \abs{\cQ_{4,1}}
&\leq \sum_{Q\in \cQ_{2,0}} 2
    + \sum_{Q\in \cQ_{2,1}} 2 
    + \sum_{Q\in \cQ_{4,0}} \abs{Q}
    + \sum_{Q\in \cQ_{4,1}} \abs{Q} \nonumber \\
    &= \sum_{Q\in\cQ_{\cP,I}}\abs{Q} \nonumber \\
    &= \abs{\llbracket 1,k\rrbracket\times\{1,2\}} = 2k.
    \end{align*}
    Finally, inequality \eqref{eq:alpha(P,I).1} follows from the partition $\cQ_{\cP,I} = \cQ_{2,0}\cup \cQ_{2,1}\cup \cQ_{4,0}\cup \cQ_{4,1}$ with \eqref{eq:alpha(P,I)}, which yields
\begin{align}
    \alpha(\cP,I)
    &= \sum_{Q\in \cQ_{2,0}} \max\left\{0,1- \frac{d_Q}{2}\right\}
    + \sum_{Q\in \cQ_{2,1}} \max\left\{0,1- \frac{d_Q}{2}\right\} \nonumber \\
    &+ \sum_{Q\in \cQ_{4,0}} \max\left\{0,1- \frac{d_Q}{2}\right\}
    + \sum_{Q\in \cQ_{4,1}} \max\left\{0,1- \frac{d_Q}{2}\right\} \nonumber \\
    &\leq \abs{\cQ_{2,0}} + \frac{1}{2} \abs{\cQ_{2,1}} + \abs{\cQ_{4,0}} + \frac{1}{2} \abs{\cQ_{4,1}}, \nonumber 
\end{align}
and the proof is completed.
\end{proof}

If $\cQ_{2,0}$ is empty, then we can already derive from \eqref{eq:2k bound} and \eqref{eq:alpha(P,I).1}~in~Lemma~\ref{lem:simple claim.1} that 
\begin{align*}
    \alpha(\cP,I)\leq \frac{1}{2} \abs{\cQ_{2,1}} + \abs{\cQ_{4,0}} + \frac{1}{2} \abs{\cQ_{4,1}}
    \leq \frac{1}{2} (\abs{\cQ_{2,1}} + 2\abs{\cQ_{4,0}} + 2 \abs{\cQ_{4,1}})
    \leq \frac{k}{2}.
\end{align*}

Suppose from now on that $Q_{2,0}$ is non-empty. The following lemma characterizes the blocks in $\cQ_{2,0}$ and $\cQ_{2,0} \cup Q_{2,1}$.
\begin{lemma}
\label{lem:Q=2}
If $Q\in\cQ_{2,0}\cup \cQ_{2,1}$, then there exists $\ell\in\llbracket 1,k\rrbracket$ such that
\begin{align}
Q
=
\{(\ell+1,1),(\ell,2)\},
\label{eq:Q}
\end{align}
with the convention that $(k+1,1)=(1,1)$.
Moreover, if $Q\in\cQ_{2,0}$, then $I(\ell+1,1)=I(\ell,2)$ and $Q \subseteq P_Q\times\{1,2\}$ where
\begin{align}
P_Q=
\{\ell,\ell+1\}, \label{eq:P_Q}
\end{align}
with the convention that $k+1=1$.
\end{lemma}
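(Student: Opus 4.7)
My plan is to handle the two assertions separately. For the first, that any block $Q$ of size $2$ has the form $\{(\ell+1,1),(\ell,2)\}$, I would observe that $\mathfrak{R}_1$ alone already partitions $\llbracket 1,k\rrbracket\times\{1,2\}$ into the $k$ pairs $\{(\ell+1,1),(\ell,2)\}$, since every element of $\llbracket 1,k\rrbracket\times\{1,2\}$ appears in exactly one such pair. Adjoining $\mathfrak{R}_2$ can only merge these pairs, so every block of $\cQ_{\cP,I}$ is a union of $\mathfrak{R}_1$-pairs, and a block of cardinality two must coincide with one such pair, proving \eqref{eq:Q}.

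For the second assertion I would fix $Q=\{(\ell+1,1),(\ell,2)\}\in \cQ_{2,0}$ and unpack $d_Q=0$. Since Lemma~\ref{lem:card} ensures each summand $|P|-|I^{-1}(r)\cap(P\times\{1,2\})|$ in the definition of $d_Q$ is non-negative, its vanishing forces
\[
|I^{-1}(r)\cap (P\times\{1,2\})|=|P|
\qquad \text{for all } P\in\cP_Q \text{ and } r\in I(Q\cap (P\times\{1,2\})).
\]
The engine of the proof is the following trapping principle: if $(m,j)\in P\times\{1,2\}$ with $\ell+1\in P$ satisfies $I(m,j)=I(\ell+1,1)$, then $\mathfrak{R}_2$ relates $(m,j)$ to $(\ell+1,1)$, so $(m,j)$ lies in the same block of $\cQ_{\cP,I}$ and hence in $Q$; the symmetric statement holds for $(\ell,2)$. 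Thus the preimages $I^{-1}(r)\cap (P\times\{1,2\})$ appearing above are all forced inside $Q$.

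With this in hand, I would proceed by case analysis. If $\ell$ and $\ell+1$ lay in distinct blocks $P_1\ni\ell+1$ and $P_2\ni\ell$ of $\cP$, then $Q\cap (P_1\times\{1,2\})=\{(\ell+1,1)\}$, and the trapping principle reduces $I^{-1}(I(\ell+1,1))\cap (P_1\times\{1,2\})$ to the singleton $\{(\ell+1,1)\}$, so the equality above would force $|P_1|=1$ and contradict $\min_{P\in\cP}|P|\geq 2$. Hence $\ell$ and $\ell+1$ share a common block $P$. If in this $P$ one still had $I(\ell+1,1)\neq I(\ell,2)$, the same singleton argument applied to $r=I(\ell+1,1)$ again forces $|P|=1$, a contradiction. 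Therefore $I(\ell+1,1)=I(\ell,2)\eqqcolon r$, and then trapping gives $|I^{-1}(r)\cap (P\times\{1,2\})|\leq|Q|=2$, so $|P|=2$ and $P=\{\ell,\ell+1\}$, which is \eqref{eq:P_Q}. The main obstacle is the combinatorial bookkeeping between $\mathfrak{R}_2$-induced collisions within each $P$ and the global blocks of $\cQ_{\cP,I}$; once the trapping principle is isolated, the hypothesis $|P|\geq 2$ pins down the equality cases immediately.
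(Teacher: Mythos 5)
Your proof is correct and follows essentially the same route as the paper: establishing that blocks of cardinality two must be single $\mathfrak{R}_1$-pairs, then exploiting $d_Q=0$ together with Lemma~\ref{lem:card} and the inclusion $I^{-1}(r)\cap(P\times\{1,2\})\subseteq Q$ (your ``trapping principle'') to pin down $|P|=2$ and $P=\{\ell,\ell+1\}$. The main difference is expository: you make the trapping inclusion an explicit named step and carry out an explicit three-way case analysis, whereas the paper absorbs these into a single chain of (in)equalities $2=\abs{Q}\geq\abs{I^{-1}(r)\cap(P\times\{1,2\})}=\abs{P}\geq 2$, which implicitly forces $\ell$ and $\ell+1$ into a common block once the chain collapses to equality. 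Both are valid; your version is slightly more verbose but arguably easier to audit.
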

\begin{proof}[Proof of Lemma~\ref{lem:Q=2}]
We start with the proof of \eqref{eq:Q}. Suppose that $(m,i)\in Q\in \cQ_{2,0}\cup\cQ_{2,1}$. 

If $i=1$, then since $((m,1),(m+1,2))\in \mathfrak{R}_1$, $(m+1,2)\in Q$, with the convention that $(k+1,2)=(1,2)$, and $Q=\{(m,1),(m+1,2)\}$.

If $i=2$, then since $((m-1,1),(m,2))\in \mathfrak{R}_1$, $(m-1,1)\in Q$, with the convention that $(0,1)=(k,1)$, and $Q=\{(m-1,1),(m,2)\}$.

Now, suppose that $Q = \{(\ell+1,1),(\ell,2)\}\in \cQ_{2,0}$ for some $\ell\in\llbracket 1,k\rrbracket$, which is guaranteed by \eqref{eq:Q}. By the definition of $d_Q$ in the statement of Lemma~\ref{lem:T upp bnd}, we have that 
\begin{align*}
0
=
d_Q
= 
\sum_{P\in\cP_Q}\sum_{r\in I(Q\cap (P\times\{1,2\}))}
(\abs*{P}-\abs*{I^{-1}(r)\cap (P\times\{1,2\})}).
\end{align*}
Thus, Lemma~\ref{lem:card} yields that for all $P\in\cP_Q$ and $r\in I(Q\cap (P\times\{1,2\}))$, 
\begin{align*}
\abs*{P}=\abs*{I^{-1}(r)\cap (P\times\{1,2\})},
\end{align*}
and since $I^{-1}(r)\cap (P\times\{1,2\})\subseteq Q$, we derive that
\begin{align}
    2=\abs{Q}\geq \abs*{I^{-1}(r)\cap P\times\{1,2\}}=\abs*{P}\geq 2. \label{eq:unique P_Q}
\end{align}
But \eqref{eq:unique P_Q} yields that there exists a unique $P_Q\in\cP_Q$ and $r\in I(Q\cap (P\times\{1,2\}))$ such that 
\begin{align*}
    Q=I^{-1}(r)\cap (P_Q\times\{1,2\})
    \subseteq P_Q\times\{1,2\}.
\end{align*} 
The inclusion above and the fact that $\abs{Q}=2$ yield that either $((\ell+1,1),(\ell,2))$ or $((\ell,2),(\ell,1))$ belongs to $\mathfrak{R}_2$, which both indicates that $I(\ell+1,1)=I(\ell,2)=r$. 
Moreover, since we know from \eqref{eq:unique P_Q} that $\abs*{P_Q\times\{1,2\}}=4$, the inclusion above yields
\begin{align*}
 P_Q\times\{1,2\}=\{(\ell,1),(\ell,2),(\ell+1,1),(\ell+1,2)\},
\end{align*}
which yields \eqref{eq:P_Q}.
\end{proof}

\subsection{Induction on blocks satisfying Condition~\protect\hyperlink{cond:S}{(S)}}

Recall that we assume that $\cQ_{2,0}$ is non-empty. In this section, we want to proceed with the proof of \eqref{eq:a(P,I) bnd} by induction on the number of $Q\in\cQ_{2,0}$ satisfying the following condition.
\begin{enumerate}
    \item[\hypertarget{cond:S}{(S)}]
    Recall that Lemma~\ref{lem:Q=2} implies that for $Q\in\cQ_{2,0}$, there exists $\ell\in\llbracket 1,k\rrbracket$ such that $Q
    =\{(\ell+1,1),(\ell,2)\}$. We say that $Q$ satisfies Condition $(\mathrm{S})$ if 
    one of the following conditions hold:
    \begin{enumerate}
        \item[\hypertarget{cond:S1}{(S1)}] $I(\ell,1)=I(\ell+1,2)$.
        \item[\hypertarget{cond:S2}{(S2)}]
        for all $Q'\in\cQ_{\cP,I}$ such that $Q'\cap \{(\ell,1), (\ell+1,2)\}\neq \varnothing$, $d_{Q'}\geq 2$.
    \end{enumerate}
\end{enumerate}

\subsubsection*{Base case: there are no blocks satisfying Condition~\protect\hyperlink{cond:S}{(S)}}
In this case, for every block $Q
    =\{(\ell+1,1),(\ell,2)\}\in\cQ_{2,0}$, both Condition~\protect\hyperlink{cond:S1}{(S1)} and Condition~\protect\hyperlink{cond:S2}{(S2)} are false. That is,
    \begin{enumerate}
    \item  $I(\ell,1)\neq I(\ell+1,2)$ and
    \item there exists $Q'\in\cQ_{\cP,I}$ with $Q'\cap \{(\ell,1), (\ell+1,2)\}\neq \varnothing$, $d_{Q'}= 1$
    \end{enumerate}
Note that Point~1 implies that $d_{Q'}\neq 0$ in Point~2. 

We claim that in this case, the following lemma holds. 
\begin{lemma}
    \label{lem:2<=4}
    In the base case, $\abs{\cQ_{2,0}}\leq \abs{\cQ_{4,1}}$.
\end{lemma}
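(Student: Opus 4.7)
The plan is to construct an explicit injection $\phi\colon\cQ_{2,0}\hookrightarrow\cQ_{4,1}$; then $|\cQ_{2,0}|\leq|\cQ_{4,1}|$ is immediate. Fix $Q\in\cQ_{2,0}$. By Lemma~\ref{lem:Q=2}, write $Q=\{(\ell+1,1),(\ell,2)\}$ with $P_Q=\{\ell,\ell+1\}\in\cP$ and $I(\ell+1,1)=I(\ell,2)$. Let $Q_1,Q_2\in\cQ_{\cP,I}$ be the blocks containing the remaining two elements $(\ell,1)$ and $(\ell+1,2)$ of $P_Q\times\{1,2\}$; the map will be $\phi(Q)=Q_i$ for any $i\in\{1,2\}$ with $d_{Q_i}=1$.

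The first step is to record the $I$-value configuration on $P_Q\times\{1,2\}$ imposed by the base case. We have $I(\ell,1)\neq I(\ell,2)$ by definition of $\mathcal{I}_k$; $I(\ell,1)\neq I(\ell+1,2)$ by failure of (S1); and $I(\ell,2)\neq I(\ell+1,2)$ since equality would place $(\ell+1,2)\in Q$ via $\mathfrak{R}_2$. Combined with $I(\ell+1,1)=I(\ell,2)$, this shows that both $I(\ell,1)$ and $I(\ell+1,2)$ occur \emph{exactly once} in $I(P_Q\times\{1,2\})$. Using the observation already noted after the statement of the base case---that Point~1 forbids $d_{Q'}=0$ for any $Q'$ meeting $\{(\ell,1),(\ell+1,2)\}$---we deduce $d_{Q_1},d_{Q_2}\geq 1$ and that $\min(d_{Q_1},d_{Q_2})=1$.

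Next, I would rule out $Q_1=Q_2$ and confirm that $\phi(Q)\in\cQ_{4,1}$. If $Q_1=Q_2=:Q^*$, then $Q^*\cap(P_Q\times\{1,2\})\supseteq\{(\ell,1),(\ell+1,2)\}$, and the two once-occurring values $I(\ell,1),I(\ell+1,2)$ each contribute $|P_Q|-1=1$ to $d_{Q^*}$, giving $d_{Q^*}\geq 2$, which contradicts the base case. To show $|Q_i|\geq 4$ when $d_{Q_i}=1$, assume $|Q_1|=2$; then $Q_1=\{(\ell,1),(\ell-1,2)\}$, and since $\cQ_{2,0}$ is empty whenever $k\leq 2$, we may assume $k\geq 3$, so $P_{\ell-1}\neq P_Q$, where $P_{\ell-1}$ denotes the block of $\cP$ containing $\ell-1$. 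The $P_Q$-contribution to $d_{Q_1}$ is already $1$ by the uniqueness of $I(\ell,1)$. Since $Q_1\cap(P_{\ell-1}\times\{1,2\})=\{(\ell-1,2)\}$ and any element of $P_{\ell-1}\times\{1,2\}$ with $I$-value $I(\ell-1,2)$ is forced into $Q_1$ by $\mathfrak{R}_2$, we get $|I^{-1}(I(\ell-1,2))\cap(P_{\ell-1}\times\{1,2\})|\leq 1<|P_{\ell-1}|$, contributing a further $\geq 1$, so $d_{Q_1}\geq 2$; contradiction. The same argument applies to $Q_2$.

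Finally, for injectivity: suppose $\phi(Q^{(a)})=\phi(Q^{(b)})=B$ with $Q^{(a)}\neq Q^{(b)}$. Since $Q$ is reconstructible from $P_Q$, the pair blocks $P_{Q^{(a)}},P_{Q^{(b)}}\in\cP$ are distinct, hence disjoint as blocks of a partition. Applying the first-step uniqueness to each of $Q^{(a)},Q^{(b)}$, the block $B$ meets $P_{Q^{(a)}}\times\{1,2\}$ and $P_{Q^{(b)}}\times\{1,2\}$ in elements whose $I$-values are unique on their respective $P$-blocks, so each of these two disjoint $P$-blocks contributes $\geq 1$ to $d_B$, whence $d_B\geq 2$; but $d_B=1$ by construction of $\phi$, a contradiction. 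The main technical hurdle is the $I$-value bookkeeping on $P_Q\times\{1,2\}$ in the first step; once each external element is shown to carry a once-occurring value, the exclusion of $Q_1=Q_2$, the lower bound $|Q_i|\geq 4$, and the injectivity all reduce to the same deficit-counting mechanism.
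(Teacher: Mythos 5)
Your proof is correct and follows essentially the same route as the paper's: construct the injection $\phi:\cQ_{2,0}\hookrightarrow\cQ_{4,1}$ by sending $Q$ to a block meeting $\{(\ell,1),(\ell+1,2)\}$ with deficit $1$ (guaranteed by the failure of (S2)), then rule out $|\phi(Q)|=2$ and non-injectivity by the same deficit-counting argument. Your write-up is if anything slightly more careful than the paper's: you explicitly verify that $I(\ell,1)$ and $I(\ell+1,2)$ are once-occurring in $I(P_Q\times\{1,2\})$ (the paper uses $\abs{I^{-1}(r)\cap(P_Q\times\{1,2\})}=1$ without spelling out why), and you separate the case $Q_1=Q_2$ into a preliminary step, whereas the paper folds it into the $P_Q=P'$ case of the $\cQ_{2,1}$ analysis.
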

\begin{proof}
    We construct an injection $\iota:\cQ_{2,0}\hookrightarrow\cQ_{4,1}$ defined as $\iota(Q)=Q'$ where $Q=\{(\ell+1,1),(\ell,2)\}$ and $Q'\in\cQ_{\cP,I}$ is the block provided by Point 2 of the base case. If there are two distinct choices of $Q'$, we choose the one containing $(\ell,1)$. 

We claim that $\iota(Q)\in \cQ_{4,1}$. Suppose not. 
Then, $\iota(Q)=\{z_1,z_2\}\in \cQ_{2,1}$. 
Denoting by $P_Q = \{\ell,\ell+1\}$, 
let $\{z_1\}= (P_Q\times \{1,2\})\cap \iota(Q)$ and let $P'\in\cP_{\iota(Q)}=\{P'\in\cP:P'\cap \iota(Q)\neq \varnothing\}$ such that $\{z_2\}= (P'\times\{1,2\})\cap \iota(Q)$. 

If $P_Q = P'$, then $\iota(Q) = \{(\ell,1),(\ell+1,2)\}$ and 
\begin{align*}
d_{\iota(Q)}
=\sum_{r\in\{I(\ell,1),I(\ell+1,2)\}} \abs{P_Q} - 
\abs{I^{-1}(r)\cap (P_Q\times\{1,2\})} = 2,
\end{align*}
which violates the assumption that $d_{\iota(Q)}=1$.

If $P_Q\neq P'$, then $I((P_Q\times\{1,2\})\cap\iota(Q)) = \{I(z_1)\}$ and $I((P'\times\{1,2\})\cap\iota(Q)) = \{I(z_2)\}$, Thus, 
\begin{align*}
    d_{\iota(Q)}
    = \sum_{P''\in \{P_Q,P'\}} \sum_{r\in I((P''\times\{1,2\}\cap \iota(Q)))} \abs*{P''} - \abs*{I^{-1}(r)\cap (P''\times\{1,2\}))} 
    \geq 2
\end{align*}
which again leads to a contradiction.

It remains to show that $\iota$ is an injection. Suppose that $Q,Q'\in \cQ_{2,0}$ are distinct and $\iota(Q)=\iota(Q')\in \cQ_{4,1}$. We denote by $P_Q,P_{Q'}\in\cP$ the blocks provided by Lemma~\ref{lem:Q=2} corresponding to $Q$ and $Q'$, respectively. Then, we have $\iota(Q)\cap P_Q = \{z\}$ and $\iota(Q)\cap P_{Q'}=\iota(Q')\cap P_{Q'} = \{z'\}$ and 
\begin{align*}
\abs*{P_Q} - \abs*{I^{-1}(I(z))\cap (P_Q\times \{1,2\})}=1,
\quad
\abs*{P_{Q'}} - \abs*{I^{-1}(I(z'))\cap (P_{Q'}\times \{1,2\})}=1,
\end{align*}
and the equations above yield
\begin{align*}
    1=d_{\iota(Q)}
    \geq
    \sum_{P\in\{P_Q,P_{Q'}\}} 
    \sum_{r\in I((P\times\{1,2\})\cap Q)} 
    \abs*{P} - \abs*{I^{-1}(r)\cap (P\times\{1,2\})}
    =2,
\end{align*}
which is a contradiction.
\end{proof}
By Lemma~\ref{lem:2<=4}, \eqref{eq:2k bound} and \eqref{eq:alpha(P,I).1}~in~Lemma~\ref{lem:simple claim.1} yield 
\begin{align*}
    \alpha(\cP,I)
    &\leq \abs{\cQ_{2,0}} + \frac{1}{2} \abs{\cQ_{2,1}} + \abs{\cQ_{4,0}} + \frac{1}{2} \abs{\cQ_{4,1}} \nonumber \\
    &= \frac{1}{2} (\underbrace{\abs{\cQ_{2,0}} + \abs{\cQ_{2,1}} + 2\abs{\cQ_{4,0}} + 2 \abs{\cQ_{4,1}}}_{\leq k}) + \frac{1}{2} (\underbrace{\abs{\cQ_{2,0}}-\abs{\cQ_{4,1}}}_{\leq 0}) 
    \leq \frac{k}{2}.
\end{align*}

\subsubsection*{Induction step} 

Suppose that there exists $Q_0=\{(\ell+1,1),(\ell,2)\}\in\cQ_{2,0}$ satisfying Condition~\hyperlink{cond:S}{(S)}.  
Recall that Lemma~\ref{lem:Q=2} implies that there exists $P_{Q_0} = \{\ell,\ell+1\}\in\cP$ such that $Q_0\subseteq P_{Q_0}$.

We distinguish the following two cases.

\paragraph{Case 1: Condition~\protect\hyperlink{cond:S1}{(S1)} holds.} 
 We define the bijection $f:\llbracket 1,k\rrbracket \backslash P_{Q_0}\rightarrow \llbracket 1,k-2\rrbracket$ by
\begin{align*}
    f(m)
    =
    \begin{cases}
        m, & m\in \llbracket 1,\ell-1\rrbracket, \\
        m-2, & m\in \llbracket\ell+2,k\rrbracket.
    \end{cases}
\end{align*}
Define $\cP' = \{f(P):P\in \cP\backslash \{P_{Q_0}\}\}$. Since $f$ is a bijection and $P_{Q_0}$ is a block of size $2$, we see that $\cP'$ is a partition on $\llbracket 1,k-2\rrbracket$ such that $\min_{P\in\cP'}|P|\geq 2$ and  $\max_{P\in\cP'}|P|>2$. 

Let $I'$ be a mapping defined on $\llbracket 1,k-2\rrbracket\times\{1,2\}$ defined by
\begin{align*}
    I'(m,i) = I(f^{-1}(m),i), \quad (m,i)\in \llbracket 1,k-2\rrbracket\times\{1,2\}.
\end{align*}
Then, one can check that $I'\in\mathcal{I}_{k-2}$.

Define $\cQ_{\cP',I'}$ to be the partition on $\llbracket 1,k-2\rrbracket\times \{1,2\}$ defined by the relation $\mathfrak{R}_1'\cup \mathfrak{R}_2'$ where
    \begin{align*}
        &\mathfrak{R}_1'
        \coloneqq \bigcup_{m=1}^{k-2}\{((m+1,1),(m,2))\},
        \nonumber \\
        &\mathfrak{R}_2'
        \coloneqq
        \bigcup_{P\in\cP'} 
        \Big\{
        ((m_1,i_1),(m_2,i_2)) \in (P\times\{1,2\})^2
        :
        I'(m_1,i_1) = I'(m_2,i_2) 
        \Big\},
    \end{align*}
    with the convention that $(k-1,1) = (1,1)$. 

    Note that $\cQ_{\cP',I'}$ is a partition with one less block satisfying Condition~\hyperlink{Cond:S}{(S)}. Thus, by the induction hypothesis, we have $\alpha(\cP',I')\leq (k-2)/2$. We claim that 
  \begin{align}
      \alpha(\cP,I)= \alpha(\cP',I')+1, \label{eq:increment of alpha}
  \end{align}
  from which we can conclude that $\alpha(\cP,I)\leq k/2$. To prove \eqref{eq:increment of alpha}, we start with a lemma that characterizes $Q_{\cP',I'}$ in terms of $f$, $\cQ_{\cP,I}$ and $\cP$.
\begin{lemma}
\label{lem:retraction} 
Assume that Condition~\protect\hyperlink{cond:S1}{(S1)} holds, denote by $Q_1$ the unique block in $\cQ_{\cP,I}$ containing $(\ell,1),(\ell+1,2)$. Then 
\begin{align*}
\cQ_{\cP',I'} = 
\{
f(Q):Q\in\cQ_{\cP,I}\backslash\{Q_0,Q_1\}
\}
\cup
\{f(Q_1\backslash  (P_{Q_0}\times\{1,2\}))\},
\end{align*}
where $f(Q) \coloneqq \{(f(m),i):m\in Q\}$ for any subset $Q\subseteq (\llbracket 1,k\rrbracket\backslash Q_0) \times\{1,2\}$.
\end{lemma}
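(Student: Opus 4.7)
The plan is to verify the set equality by translating the defining relations $\mathfrak{R}_1,\mathfrak{R}_2$ of $\cQ_{\cP,I}$ to the relations $\mathfrak{R}_1',\mathfrak{R}_2'$ of $\cQ_{\cP',I'}$ through the bijection $f$, and to pinpoint the single structural modification caused by deleting $P_{Q_0}$.

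First, I would analyze the shape of $Q_1$ under Condition~\hyperlink{cond:S1}{(S1)}. Since $I(\ell,1)=I(\ell+1,2)$ and both $(\ell,1),(\ell+1,2)\in P_{Q_0}\times\{1,2\}$, the pair $((\ell,1),(\ell+1,2))$ lies in $\mathfrak{R}_2$, so $(\ell,1),(\ell+1,2)\in Q_1$. The $\mathfrak{R}_1$-relations indexed by $m=\ell-1$ and $m=\ell+1$ then drag $(\ell-1,2)$ and $(\ell+2,1)$ into $Q_1$ as well. Consequently, $P_{Q_0}\times\{1,2\}\subseteq Q_0\cup Q_1$, so for any $Q\in\cQ_{\cP,I}\setminus\{Q_0,Q_1\}$, $Q$ is disjoint from $P_{Q_0}\times\{1,2\}$ and $f(Q)$ is well defined.

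Next, I would compare the generating relations of the two partitions. The identity $I'(f(m),i)=I(m,i)$ makes $f$ a bijection between the $\mathfrak{R}_2$-relations inside $P\times\{1,2\}$ for $P\in\cP\setminus\{P_{Q_0}\}$ and the $\mathfrak{R}_2'$-relations inside $f(P)\times\{1,2\}$. Among the $\mathfrak{R}_1$-generators that touch $P_{Q_0}=\{\ell,\ell+1\}$, the one at $m=\ell$ is exactly $Q_0$ and is discarded, while the two boundary-crossing generators at $m=\ell-1$ and $m=\ell+1$ are replaced in $\mathfrak{R}_1'$ by a single new generator at new index $m=\ell-1$, namely $((\ell,1),(\ell-1,2))$ in the new coordinates, which in old coordinates becomes the bridge connecting $(\ell-1,2)$ and $(\ell+2,1)$. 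All other $\mathfrak{R}_1$-generators transfer verbatim under $f$.

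From these observations I would conclude the lemma as follows. For $Q\in\cQ_{\cP,I}\setminus\{Q_0,Q_1\}$, every $\mathfrak{R}_1\cup\mathfrak{R}_2$-relation inside $Q$ maps bijectively under $f$ to a relation in $\mathfrak{R}_1'\cup\mathfrak{R}_2'$ inside $f(Q)$, and the bridge has no endpoint in $f(Q)$ since both of its old-coordinate endpoints lie in $Q_1$; hence $f(Q)$ is a single block of $\cQ_{\cP',I'}$. The block $Q_0$ is entirely removed together with $P_{Q_0}\times\{1,2\}$. The delicate step, which I expect to be the main obstacle, is showing that $f(Q_1\setminus(P_{Q_0}\times\{1,2\}))$ remains a single block of $\cQ_{\cP',I'}$. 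For this I would argue that any $\mathfrak{R}_1\cup\mathfrak{R}_2$-path within $Q_1$ between two elements outside $P_{Q_0}\times\{1,2\}$ that previously transited through $P_{Q_0}\times\{1,2\}$ can be rerouted through the single bridge-edge, using that $\mathfrak{R}_2$ never crosses the boundary of $P_{Q_0}\times\{1,2\}$ (it respects $\cP$-blocks) and that $\mathfrak{R}_1$ crosses the boundary only at the pairs $\{(\ell-1,2),(\ell,1)\}$ and $\{(\ell+1,2),(\ell+2,1)\}$, whose outside-endpoints $(\ell-1,2)$ and $(\ell+2,1)$ are precisely those joined by the bridge. The reverse inclusion, that every block of $\cQ_{\cP',I'}$ arises in this way, then follows since the generators of $\mathfrak{R}_1'\cup\mathfrak{R}_2'$ have already been exhausted by the translation above.
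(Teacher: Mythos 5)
Your plan is correct and follows essentially the same route as the paper: one removes the generating relations involving $P_{Q_0}\times\{1,2\}$, observes that the re-indexed generator set $\mathfrak{R}_1'\cup\mathfrak{R}_2'$, pulled back along $f$, is exactly the surviving relations together with the bridge joining $(\ell-1,2)$ and $(\ell+2,1)$, and then transports by $f$. The paper handles the point you flag as delicate (that $Q_1\setminus(P_{Q_0}\times\{1,2\})$ stays a single block) by a short case analysis on whether $(\ell-1,2)$ and $(\ell+2,1)$ are already connected in the reduced relation before the bridge is inserted; your path-rerouting argument is a cosmetic variant of the same observation.
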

\begin{proof}
Defining $w_1 = (\ell+1,1) $ and $w_2 = (\ell,2)$, recall that $Q_0 = \{w_0,w_1\}$.

First of all, note that the relation $\mathfrak{Q}_1\cup\mathfrak{Q}_2$ where
\begin{align*}
    \mathfrak{Q}_1
    =
    \mathfrak{R}_1\backslash\{(w_1,w_2)\}, 
    \qquad
    \mathfrak{Q}_2
    =
    \mathfrak{R}_2\backslash\{(w_1,w_2),(w_2,w_1)\}
\end{align*}
generates the partition $\cQ_{\cP,I}\backslash\{Q_0\}$ on $(\llbracket 1,k\rrbracket \times \{1,2\})\backslash Q_0$. 

Moreover, defining 
\begin{align*}
    v_1 = (\ell,1),
    \qquad
    v_2 = (\ell-1,2),
    \qquad
    u_1 = (\ell+2,1),
    \qquad
    u_2 = (\ell+1,2),
\end{align*}
let $\tilde{Q}$ be partition on
\begin{align*}
    (\llbracket 1,k\rrbracket \times \{1,2\})\backslash (P_{Q_0}\times\{1,2\}) = (\llbracket 1,k\rrbracket \times \{1,2\})
\backslash\{v_1,v_2,u_1,u_2\}
\end{align*}
generates by
the relation $\mathfrak{Q}_3\cup\mathfrak{Q}_4$
where
\begin{align*}
    \mathfrak{Q}_3
    =
    \mathfrak{Q}_1
    \backslash
    \{(v_1,v_2),(u_1,u_2)\},
    \qquad
    \mathfrak{Q}_4 
    =
    \mathfrak{Q}_2
    \backslash
    \{(v_1,u_2),(u_2,v_1)\}.
\end{align*} 
Denoting by $\sim$ the equivalence relation generated by $\mathfrak{Q}_3\cup\mathfrak{Q}_4$, 
let
\begin{align*}
    Q_{v_2} = \{v\in Q_1 : v\sim v_2\},
    \qquad 
    Q_{u_1} = \{v\in Q_1 : v\sim u_1\}.
\end{align*}
\begin{itemize}
    \item If $v_2\sim u_1$, then $Q_{v_2} = Q_{u_1} = Q_1\backslash (P_{Q_0}\times \{1,2\})$ and 
    \begin{align*}
        \cQ_1 = (\cQ_{\cP,I} \backslash \{Q_0,Q_1\})\cup \{ Q_1\backslash (P_{Q_0}\times \{1,2\})\}.
    \end{align*}
    Moreover, since $(v_2,u_1)\in (\mathfrak{Q}_3 \cup (v_2,u_1))\cup\mathfrak{Q}_4$, $\tilde{Q}$ is also the partition generated by $(\mathfrak{Q}_3 \cup (v_2,u_1))\cup\mathfrak{Q}_4$
    \item If $v_2\not\sim u_1$, then $Q_{v_2} \neq Q_{u_1}$ and 
    \begin{align*}
        \cQ_1 = (\cQ_{\cP,I} \backslash \{Q_0,Q_1\})\cup \{ Q_{v_2}, Q_{u_1}\}.
    \end{align*}
    Since $(v_2,u_1)\in (\mathfrak{Q}_3 \cup (v_2,u_1))\cup\mathfrak{Q}_4$, 
    \begin{align*}
        \cQ_2
        =
        (\cQ_{\cP,I} \backslash \{Q_0,Q_1\})\cup \{ Q_{v_2}\cup Q_{u_1}\}
        =
        (\cQ_{\cP,I} \backslash \{Q_0,Q_1\})\cup \{ Q_1\backslash (P_{Q_0}\times \{1,2\})\}
    \end{align*}
    is the partition generated by $(\mathfrak{Q}_3 \cup (v_2,u_1))\cup\mathfrak{Q}_4$.
\end{itemize}
In conclusion, the relation $(\mathfrak{Q}_3 \cup (v_2,u_1))\cup\mathfrak{Q}_4$ generates the partition 
    \begin{align*}
        \tilde{Q}
        =
        (\cQ_{\cP,I} \backslash \{Q_0,Q_1\})\cup \{ Q_1\backslash (P_{Q_0}\times \{1,2\})\}
    \end{align*}
    on $(\llbracket 1,k\rrbracket \times \{1,2\})\backslash (P_{Q_0}\times\{1,2\})$.
    Since $(m,i)\mapsto (f(m),i)$ is a bijection from $(\llbracket 1,k\rrbracket \times \{1,2\})\backslash (P_{Q_0}\times\{1,2\})$ to $\llbracket 1,k-2\rrbracket \times \{1,2\}$,
    \begin{align*}
        f(\tilde{\cQ})
        =
        \{
        f(Q):Q\in\cQ_{\cP,I}\backslash\{Q_0,Q_1\}
        \}
        \cup
        \{f(Q_1\backslash  (P_{Q_0}\times\{1,2\}))\}
    \end{align*}
    is a partition on $\llbracket 1,k-2\rrbracket \times \{1,2\}$. Moreover, one can check that
    \begin{align*}
        &f(\mathfrak{Q}_3\cup \{v_2,u_1\}) \nonumber \\
        &\coloneqq  
        \big\{
        ((f(\ell+2),1),(f(\ell-1),2))
        \big\} 
        \cup
        \bigcup_{m\in\llbracket 1,k\rrbracket\rrbracket \backslash \{\ell-1,\ell,\ell+1\}}
        \big\{
        ((f(m+1),1),(f(m),2)
        \big\} 
        \nonumber \\
        &=
        \mathfrak{R}_1'
    \end{align*}
    and 
    \begin{align*}
        f(\mathfrak{Q}_4)
        &\coloneqq  
        \bigcup_{P\in\cP\backslash\{P_{Q_0}\}}
        \big\{
        ((f(m_1),i_1),(f(m_2),i_2)) \in (f(P)\times\{1,2\})^2
        :
        I(m_1,i_1) = I(m_2,i_2)
        \big\} \\
        &=
        \mathfrak{R}_2',
    \end{align*}
    so we can conclude that $f(\tilde{\cQ}) = \cQ_{\cP',I'}$, which completes the proof.
\end{proof}
By the characterization of $\cQ_{\cP',I'}$ provided by Lemma~\ref{lem:retraction}, to determine $d_{Q'}$ for each $Q'\in\cQ_{\cP',I'}$, it suffices to consider the following two cases:
\begin{itemize}
\item Recall the definition of $I'$ and the fact that $f$ is a bijection. For all $P\in\cP_Q\backslash\{P_{Q_0}\}$, where we recall that $\cP_Q = \{P\in\cP:(P\times\{1,2\})\cap Q\neq \varnothing\}$,
    we have $\abs{f(P)}=\abs{P}$, 
    \begin{align*}
        I'(f(Q)\cap (f(P)\times \{1,2\}))
        = I(Q\cap (P\times \{1,2\}))
    \end{align*}
    and 
    \begin{align*}
        \abs*{(I')^{-1}(r)\cap (f(P)\times\{1,2\})})
        =
        \abs*{I^{-1}(r)\cap (P\times\{1,2\})})
    \end{align*}
    for $r\in I(Q\cap (P\times \{1,2\}))$. Therefore, for all $Q\in \cQ_{\cP,I}\backslash\{Q_0,Q_1\}$, 
    \begin{align}
        d_{f(Q)}
        &= 
        \sum_{f(P)\in\cP'_{f(Q)}}\sum_{r\in I'(f(Q)\cap (f(P)\times\{1,2\}))}
        (\abs*{f(P)}-\abs*{(I')^{-1}(r)\cap (f(P)\times\{1,2\})}) 
        \nonumber \\
        &=
        \sum_{P\in\cP_{Q}\backslash \{P_{Q_0}\}}\sum_{r\in I(Q\cap (P\times\{1,2\}))}
        (\abs*{P}-\abs*{I^{-1}(r)\cap (P\times\{1,2\})})
        =
        d_Q. \nonumber 
    \end{align}
\item Now, let $Q=Q_1\backslash (P_{Q_0}\times \{1,2\})$.
    Note that for all $P\in \cP$ such that $P\cap Q_1\neq \varnothing$, if $P\neq P_{Q_0}$, then $P$ and $P_{Q_0}$ are disjoint, and this yields
    \begin{align}
        (P\times\{1,2\})\cap (Q_1\backslash(P_{Q_0}\times\{1,2\})
        =
        (P\times\{1,2\})\cap Q_1 \label{eq:PPQ0}
    \end{align}
    Then, \eqref{eq:PPQ0} implies that 
    \begin{align}
        \cP_Q\backslash \{P_{Q_0}\}
        &=
        \{P\in\cP : (P\times\{1,2\})\cap (Q_1\backslash(P_{Q_0}\times\{1,2\}) 
        \neq
        \varnothing\}
        \backslash \{P_{Q_0}\}
        \nonumber \\
        &=
        \{P\in\cP : (P\times\{1,2\})\cap Q_1
        \neq
        \varnothing\}
        \backslash \{P_{Q_0}\}
        \nonumber \\
        &=
        \cP_{Q_1}\backslash \{P_{Q_0}\} \label{eq:PPQ0.1}
    \end{align}
    and 
    \begin{align}
        I((P\times\{1,2\})\cap (Q_1\backslash(P_{Q_0}\times\{1,2\}))
        =
        I((P\times\{1,2\})\cap Q_1). \label{eq:PPQ0.2}
    \end{align}
    From \eqref{eq:PPQ0.1} and \eqref{eq:PPQ0.2}, we obtain that 
    \begin{align}
        d_{f(Q)}
        &= 
        \sum_{f(P)\in\cP'_{f(Q)}}\sum_{r\in I'(f(Q)\cap (f(P)\times\{1,2\}))}
        (\abs*{f(P)}-\abs*{(I')^{-1}(r)\cap (f(P)\times\{1,2\})}) 
        \nonumber \\
        &=
        \sum_{P\in\cP_{Q}\backslash\{P_{Q_0}\}}\sum_{r\in I(Q\cap (P\times\{1,2\}))}
        (\abs*{P}-\abs*{I^{-1}(r)\cap (P\times\{1,2\})})
        \nonumber \\
        &=
        \sum_{P\in\cP_{Q_1}\backslash\{P_{Q_0}\}}
        \sum_{r\in I(Q_1\cap (P\times\{1,2\}))}
        (\abs*{P}-\abs*{I^{-1}(r)\cap (P\times\{1,2\})}).
        \label{eq:df(Q).1}
    \end{align}
    On the other hand,
    \begin{align}
        d_{Q_1}
        = 
        \sum_{P\in\cP_{Q_1}}\sum_{r\in I(Q_1\cap (P\times\{1,2\}))}
        (\abs*{P}-\abs*{I^{-1}(r)\cap (P\times\{1,2\})}) 
        \label{eq:dQ_1},
    \end{align}
    Thus, by \eqref{eq:df(Q).1} and \eqref{eq:dQ_1}, subtracting $d_{f(Q)}$ from $d_{Q_1}$ yields
    \begin{align*}
        d_{Q_1} - d_{f(Q)}
        =
        \sum_{r\in I(Q_1\cap (P_{Q_0}\times\{1,2\}))}
        (\abs*{P_{Q_0}}-\abs*{I^{-1}(r)\cap (P_{Q_0}\times\{1,2\})}).
    \end{align*}
    Recall that $Q_1\cap (P_{Q_0}\times\{1,2\} = \{(\ell,1),(\ell+1,2)\}$. 
    Since we assume Condition~\hyperlink{Cond:S1}{(S1)} holds, we have $I(\ell,1) = I(\ell+1,2)$ which yields
    \begin{align*}
        d_{Q_1} - d_{f(Q)}
        =
        (\abs*{P_{Q_0}}-\abs*{I^{-1}(I(\ell,1))\cap (P_{Q_0}\times\{1,2\})})
        =
        2 - \abs{\{(\ell,1),(\ell+1,2)\}}
        = 0.
    \end{align*}
\end{itemize}
By the case study above, we conclude that 
\begin{align*}
    \alpha(\cP,I)-\alpha(\cP',I') 
    =
    \max\Big\{0,1-\frac{d_{Q_0}}{2}\Big\} = 1,
\end{align*}
which proves \eqref{eq:increment of alpha}.

\paragraph{Case 2: Condition~\protect\hyperlink{Cond:S1}{(S1)} does not hold, but Condition~\protect\hyperlink{Cond:S2}{(S2)} holds.} Adopting the abbreviations  $v_1 = (\ell,1)$ and $v_2 = (\ell+1,2)$, the assumption of Case 2 can be stated explicitly as the following two points:
\begin{enumerate}
    \item $I(v_1)\neq I(v_2)$.
    \item For all $Q'\in\cQ_{\cP,I}$ such that $Q'\cap \{v_1,v_2\}\neq \varnothing$, $d_{Q'}\geq 2$.
\end{enumerate} 
To treat Case~2, we want to keep $\cP$ but modify $I$ to be $I'$ such that Condition~\protect\hyperlink{Cond:S1}{(S1)} holds for $\cQ_{\cP,I'}$, and 
\begin{align}
    \alpha(\cP,I) = \alpha(\cP,I').
    \label{eq:alpha equal}
\end{align}
Then, from \eqref{eq:alpha equal} and Case 1, we conclude that $\alpha(\cP,I)\leq k/2$.
We define 
\begin{align*}
    I'(m,i) =
    \begin{cases}
        I(m,i), & (m,i) \neq v_1,v_2, \\
        I(v_1), & (m,i) = v_1 \  \text{or} \ v_2.
    \end{cases}
\end{align*}
One can easily check that $I'\in\mathcal{I}_{k}$. Thus, it remains to prove \eqref{eq:alpha equal}.

Let $Q_1,Q_2\in\cQ_{\cP,I}$ contain $v_1$ and $v_2$, respectively. 

If $Q_1=Q_2$, then $\cQ_{\cP,I}=\cQ_{\cP,I'}$, and so $\alpha(\cP,I)=\alpha(\cP,I)$.

If $Q_1\cap Q_2 = \varnothing$, then $\cQ_{\cP,I'}=(\cQ_{\cP,I}\backslash\{Q_1,Q_2\})\cup \{Q_1\cup Q_2\}$. For all $ Q\in\cQ_{\cP,I'}$, denoting by 
\begin{align*}
    d_Q'
    =\sum_{P\in\cP_Q}\sum_{r\in I'(Q\cap (P\times\{1,2\}))} \abs*{P} - \abs*{(I')^{-1}(r)\cap P}, 
\end{align*}
we have that $d_Q'=d_Q$ for all $Q\neq Q_1,Q_2$. 

Define $U(P,Q,I) = \sum_{r\in I(Q\cap (P\times\{1,2\}))}
    \abs*{P}
    -
    \abs*{I^{-1}(r)\cap P}$
for $P\in \cP$ and $Q\in\cQ_{\cP,I}$. 
Point~1 of the case assumption yields that for $i=1,2$,
\begin{align*}
    I(Q_i\cap (P_{Q_0}\times\{1,2\}))=\{I(v_i)\},
    \qquad
    I^{-1}(I(v_i)) \cap P_{Q_0} = \{v_i\},
\end{align*}
so
\begin{align*}
    U(P_{Q_0},Q_i,I)
    =
    \abs*{P_{Q_0}} - \abs*{\{v_i\}} = 1.
\end{align*}
On the other hand, recall the Point~2 of the case assumption that $d_{Q_i}\geq 2$ for $i=1,2$. Thus, for $i=1,2$,
\begin{align*}
    \sum_{P\in\cP_{Q_i}\backslash\{P_{Q_0}\}}
    U(P,Q_i,I)
    =
    \sum_{P\in\cP_{Q_i}\backslash\{P_{Q_0}\}}
    \sum_{r\in I(Q\cap (P\times\{1,2\}))}
    \abs*{P}
    -
    \abs*{I^{-1}(r)\cap P}
    \geq 1,
\end{align*}
and this implies that there exist $P_i\in \cP_{Q_i}\backslash\{P_{Q_0}\}$ and $r_i\in I(Q_i\cap (P_i\times\{1,2\}))$, for $i=1,2$, such that 
\begin{align}
    \abs*{P_i}
    -
    \abs*{I^{-1}(r_i)\cap P_i} \geq 1.
    \label{eq:>=1}
\end{align}
Note that $P_1,P_2\in \cP_{Q_1\cup Q_2}$. Moreover, since $P_1,P_2\neq P_{Q_0}$, we have 
    \begin{align*}
    I'((Q_1\cup Q_2)\cap (P_i\times\{1,2\})) 
    &= I((Q_1\cup Q_2)\cap (P_i\times \{1,2\})),
    \end{align*}
    which implies that $r_1,r_2\in I'((Q_1\cup Q_2)\cap (P_i\times\{1,2\}))$. 
Finally, because $P_1,P_2\neq P_{Q_0}$, for $i=1,2$ and $r\in I'((Q_1\cup Q_2)\cap (P_i\times\{1,2\}))$,
    \begin{align*}
    (I')^{-1}(r)\cap P_i 
    &= I^{-1}(r)\cap P_i.
    \end{align*}
We now distinguish the two cases:
\begin{itemize}
    \item Suppose that $P_1\neq P_2$. Then, the paragraph before this case study and \eqref{eq:>=1} yield that
    \begin{align*}
        d_{Q_1\cup Q_2}'
        &=
        \sum_{P\in\cP_{Q_1\cup Q_2}}
        \sum_{r\in I'((Q_1\cup Q_2)\cap (P\times\{1,2\}))}
        \abs*{P}
        -
        \abs*{(I')^{-1}(r)\cap P}
        \\
        &\geq 
        \sum_{i=1,2}
        \abs*{P_i}
        -
        \abs*{I^{-1}(r_i)\cap P_i}
        \geq 2.
    \end{align*}
    \item Suppose that $P_1=P_2$. Since we assume that $Q_1,Q_2$ are disjoint blocks in $\cQ_{\cP,I}$, the definition of $\mathfrak{R}_2$ yields that 
    $I(w_1)\neq I(w_2)$
    for all $w_1\in P_1\cap Q_1$ and $w_2\in P_1\cap Q_2$, and this yields that $r_1\neq r_2$. Thus, the paragraph before this case study and \eqref{eq:>=1} yield
    \begin{align*}
        d_{Q_1\cup Q_2}'
        &=
        \sum_{P\in\cP_{Q_1\cup Q_2}}
        \sum_{r\in I'((Q_1\cup Q_2)\cap (P\times\{1,2\}))}
        \abs*{P}
        -
        \abs*{(I)^{-1}(r)\cap P}
        \\ 
        &\geq 
        \sum_{r\in I'((Q_1\cup Q_2)\cap (P_1\times\{1,2\}))}
        \abs*{P_1}
        -
        \abs*{(I)^{-1}(r)\cap P_1}
        \\
        &\geq 
        \sum_{i=1,2}
        \abs*{P_1}
        -
        \abs*{(I)^{-1}(r_i)\cap P_1}
        \geq 2.
    \end{align*}
\end{itemize}
In conclusion, we see that 
\begin{align}
    \alpha(\cP,I') - \alpha(\cP,I)
    =
    \max\Big\{0,1-\frac{d_{Q_1\cup Q_2}}{2}\Big\}
    -
    \max\Big\{0,1-\frac{d_{Q_1}}{2}\Big\}
    -
    \max\Big\{0,1-\frac{d_{Q_2}}{2}\Big\}
    =0, \nonumber 
\end{align}
which proves \eqref{eq:alpha equal}.

\section{Uniform control on the edge} \label{sec:concentration}

The goal of this section is to prove the following proposition.
\begin{proposition}
    \label{prop:sup edge control}
    For all $c>0$ and $\varepsilon>0$, there exists $\delta>0$ such that
    \begin{align*}
        \sup_{\bm{x}\in\bm{B}^N}
        \PP
        \Big(
        \mu_{N,\bm{x}}((-\infty,-2\xi''(\rho_{\bm{x}})^{1/2}+\varepsilon])
        \leq \delta 
        \Big) \leq e^{-cN},
    \end{align*}
    where $\rho_{\bm{x}} \coloneqq \norm{\bm{x}}^2/N$.
\end{proposition}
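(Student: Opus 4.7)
The plan is to combine the spectral convergence from Proposition~\ref{prop:asymptotic freeness} with log-Sobolev concentration, after truncating the mixture so that only finitely many spin interactions need to be handled. Fixing $\bm{x}$, I would first choose $p$ large enough that both $\sup_{q\in[0,1]}|\xi''(q)-\xi_p''(q)|$ and the tail $\sum_{r>p}\gamma_r r^{3/2}(r-1)$ are small---possible since $\xi$ has radius of convergence greater than $1$. Proposition~\ref{prop:Lipschitz}~\eqref{eq:p Hessian Op} and a union bound over $r>p$ then give $\|\nabla^2 H_N(\bm{x}) - \nabla^2 H_N^{\leq p}(\bm{x})\|_{op} \leq \varepsilon/8$ outside an event of probability $e^{-c_1 N}$; by Weyl's inequality this reduces the proposition to a lower bound on $\mu_{N,p,\bm{x}}((-\infty, -2\xi_p''(\rho_{\bm{x}})^{1/2} + \varepsilon/2])$.

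To obtain such a bound, I would fix a Lipschitz function $f:\R\to[0,1]$ with $|f|_{\Lip} = O(1/\varepsilon)$ that equals $1$ on $(-\infty, -2\xi_p''(\rho_{\bm{x}})^{1/2}+\varepsilon/4]$ and vanishes above $-2\xi_p''(\rho_{\bm{x}})^{1/2}+\varepsilon/2$. The semicircle mass of the former interval admits a uniform lower bound $\delta_0 = \delta_0(\varepsilon, \xi) > 0$ by a compactness argument on $\rho_{\bm{x}} \in [0,1]$: when $\xi_p''(\rho_{\bm{x}})$ is bounded away from $0$ this follows from direct integration of the semicircle density, while when $\xi_p''(\rho_{\bm{x}})$ is small the entire support of the semicircle lies inside the interval. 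Proposition~\ref{prop:asymptotic freeness} then yields $\int f\,d\bar{\mu}_{N,p,\bm{x}} \geq \delta_0/2$ for $N$ large.

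The final step upgrades this mean estimate to a high-probability statement via Condition~\hyperlink{ass:LS}{(LS)} and the Herbst argument (Proposition~2.3 of \cite{ledoux_concentration_1999}). The functional $F(J^{\leq p}) := \int f\,d\mu_{N,p,\bm{x}}$ is Lipschitz in the Euclidean norm of the disorder with constant $O(N^{-1/2})$: Hoffman--Wielandt gives $|F(J^{\leq p}) - F({J'}^{\leq p})| \leq (|f|_{\Lip}/\sqrt{N}) \|\nabla^2 H_N^{\leq p}(\bm{x}; J^{\leq p}) - \nabla^2 H_N^{\leq p}(\bm{x}; {J'}^{\leq p})\|_{\mathrm{HS}}$, and the operator norm of the linear map $J^{(r)} \mapsto \nabla_E^2 H_N^{(r)}(\bm{x})$ from the Euclidean norm on disorder to the Hilbert--Schmidt norm on matrices is bounded uniformly in $N$, because the $r-2$ factors $x_{i_\ell}$ appearing in the coefficient array are absorbed into the normalization $N^{-(r-1)/2}$ via $\|\bm{x}\| \leq \sqrt{N}$. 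Since Condition~\hyperlink{ass:LS}{(LS)} tensorizes over independent coordinates with the same constant $K$, the Herbst argument yields $\PP(|F - \E F| \geq \delta_0/4) \leq e^{-c_2 N}$; combining with the truncation step proves the proposition with $\delta = \delta_0/4$ once $p$ is chosen so that $\min(c_1, c_2) \geq c$.

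The main obstacle is the Lipschitz-constant bound in the concentration step: one has to carefully control the Hilbert--Schmidt operator norm of each linear map $J^{(r)} \mapsto \nabla_E^2 H_N^{(r)}(\bm{x})$ as a function of $r$, $\bm{x}$, and $N$, and then sum these contributions across $r \leq p$ while keeping $p$ free. The uniform positivity of the semicircle mass and the reduction from $\mu_{N,\bm{x}}$ to $\mu_{N,p,\bm{x}}$ are minor subtleties that essentially follow from estimates already established in the paper.
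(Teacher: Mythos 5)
Your proposal is correct and follows essentially the same route as the paper's proof: truncate to $H_N^{\leq p}$, test against a bump function $f$ with $|f|_{\Lip}=O(1/\varepsilon)$, lower-bound the semicircle mass, apply Proposition~\ref{prop:asymptotic freeness} for the mean, and then invoke log-Sobolev concentration. The two minor technical variations are cosmetic: for the Lipschitz constant of $F(J^{\leq p}) = \int f\,d\mu_{N,p,\bm{x}}$ you go through Hoffman--Wielandt and the Hilbert--Schmidt norm, whereas the paper bounds $\frac{1}{N}\sum_i|\lambda_i-\lambda_i'|\leq \|\Delta A\|_{\mathrm{op}}$ and estimates the operator norm directly (eq.~\eqref{eq:Hoffman--Wielandt}--\eqref{eq:J difference conclusion_p}); and for the uniform positivity of the semicircle mass you argue by a case split on whether $\xi_p''(\rho_{\bm{x}})$ is small or bounded away from zero, whereas the paper rescales and uses monotonicity $\xi''(\rho_{\bm{x}})\leq\xi''(1)$ (eq.~\eqref{eq:c_xi,eps}). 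Both give the same $O(N^{-1/2})$ Lipschitz scale and the same uniform $\delta_0>0$; indeed your Hilbert--Schmidt route is slightly sharper (the linear map $J^{(r)}\mapsto\nabla_E^2H_N^{(r)}(\bm{x})$ actually has Euclid-to-HS operator norm $O(r^2/\sqrt{N})$, not merely $O(1)$ as you state, so your chain would even yield an $O(1/N)$ Lipschitz constant), but either estimate suffices for the $e^{-cN}$ rate via Proposition~2.3 of~\cite{ledoux_concentration_1999}.
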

We start with the required lemmas, and the proof of Proposition~\ref{prop:sup edge control} is provided in Section~\ref{sec:proof sup edge control}. 
\begin{lemma}
    \label{lem:hp approx}
    For all $c>0$ and $f\in\Lip_1(\mathbb{R})$, there exist $p_0\geq 2$ and $\delta_0>0$ such that for all $p\geq p_0$ and $\delta<\delta_0$,
    \begin{align}
        \sup_{\bm{x}\in\bm{B}^N}
        \PR{
        \abs{
        \int_\R f(y)\dd{\mu_{N,\bm{x}}(y)} -
        \int_\R f(y)\dd{\mu_{N,p,\bm{x}}(y)} 
        }
        \geq \delta
        }
        \leq e^{-cN}. \label{eq:hp approx}
    \end{align}
\end{lemma}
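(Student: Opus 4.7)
The plan is to reduce the comparison of linear spectral statistics to an operator-norm bound on the difference of Hessians, and then to exploit the geometric decay of $\gamma_r$ together with the high-spin operator-norm estimate \eqref{eq:p Hessian Op} in Proposition~\ref{prop:Lipschitz}. First, for any Hermitian $N\times N$ matrices $A,B$ with eigenvalues taken in increasing order, Weyl's inequality gives $|\lambda_i(A)-\lambda_i(B)|\leq \norm{A-B}_{op}$, so for any $f\in\Lip_1(\R)$,
\begin{align*}
\abs{\int_\R f\,\dd\mu_A-\int_\R f\,\dd\mu_B}
\leq \frac{1}{N}\sum_{i=1}^N \abs{\lambda_i(A)-\lambda_i(B)}
\leq \norm{A-B}_{op}.
\end{align*}
Taking $A=\nabla^2 H_N(\bm{x})$, $B=\nabla^2 H_N^{\leq p}(\bm{x})$ and using that the orthogonal projection $P_{\bm{x}}$ is a contraction, the linearity of $\nabla_E^2 H_N$ in the mixture components yields
\begin{align*}
\abs{\int_\R f\,\dd\mu_{N,\bm{x}}-\int_\R f\,\dd\mu_{N,p,\bm{x}}}
\leq
\norm{\nabla_E^2 H_N(\bm{x})-\nabla_E^2 H_N^{\leq p}(\bm{x})}_{op}
\leq
\sum_{r>p}\gamma_r\norm{\nabla_E^2 H_N^{(r)}(\bm{x})}_{op}.
\end{align*}

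Next, for each $r>p$, estimate \eqref{eq:p Hessian Op} of Proposition~\ref{prop:Lipschitz} furnishes an event of probability at least $1-\exp(-\tfrac{C^2 r^3(r-1)^2}{2}N)$ on which $\sup_{\bm{x}\in\bm{B}^N}\norm{\nabla_E^2 H_N^{(r)}(\bm{x})}_{op}\leq C\sqrt{K}\,r^{3/2}(r-1)$. I would take a union bound over $r>p$, so on the intersection $\Omega_p$ of these good events, uniformly in $\bm{x}\in\bm{B}^N$,
\begin{align*}
\sum_{r>p}\gamma_r\norm{\nabla_E^2 H_N^{(r)}(\bm{x})}_{op}
\leq
C\sqrt{K}\sum_{r>p}\gamma_r\,r^{3/2}(r-1).
\end{align*}
Since the rate $\tfrac{C^2 r^3(r-1)^2}{2}$ grows super-polynomially in $r$, the complement $\Omega_p^c$ has probability at most $\sum_{r>p}\exp(-\tfrac{C^2 r^3(r-1)^2}{2}N)\leq e^{-cN}$ once $p_0$ is chosen large enough, depending only on $c$.

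Finally, the assumption that $\xi(z)=\sum_r\gamma_r^2 z^r$ has radius of convergence strictly greater than $1$ forces $\gamma_r$ to decay geometrically, so in particular $\sum_r \gamma_r\,r^{5/2}<\infty$ and the tail $\sum_{r>p}\gamma_r\,r^{3/2}(r-1)$ vanishes as $p\to\infty$. Thus I may pick $p_0$ so large that this deterministic tail is below the prescribed $\delta_0$, completing the argument. I do not anticipate a substantive obstacle: the proof is a straightforward combination of Weyl's inequality, the operator-norm bound \eqref{eq:p Hessian Op}, and the geometric decay of $\gamma_r$, and the only bookkeeping is to choose $p_0$ simultaneously large enough to pin down both the deterministic tail (fixing $\delta_0$) and the probability tail (matching the rate $c$), both of which are immediate from the estimates above.
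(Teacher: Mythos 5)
Your proposal is correct and takes essentially the same route as the paper: reduce to $\sum_{r>p}\gamma_r\norm{\nabla_E^2 H_N^{(r)}(\bm{x})}_{op}$ via Weyl's inequality and contractivity of $P_{\bm{x}}$, apply the per-degree operator-norm tail bound from Proposition~\ref{prop:Lipschitz} together with a union bound over $r>p$, and then take $p_0$ large enough that both the deterministic tail (using the radius-of-convergence assumption on $\xi$) and the exceptional probability are small. The only inessential difference is cosmetic: the paper treats $\deg\xi<\infty$ as a trivial separate case and packages the thresholds as $\delta_p$, $c_p$ before choosing $p_0$; the content is the same.
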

\begin{proof}
    If $\deg\xi<\infty$, we can simply choose $p=\deg\xi$, so assume in the following that $\deg\xi=\infty$.
    
    Given $\bm{x}\in\bm{B}^N$ and $p\geq 2$, by Weyl's inequality, for all $i\in\llbracket 1,N\rrbracket$,
    \begin{align*}
    \abs*{
    \lambda_{N,\bm{x}}^{(i)}
    -
    \lambda_{N,p,\bm{x}}^{(i)}
    }
    \leq
    \sum_{r=p+1}^\infty 
    	\gamma_r
        \norm*{\nabla_E^2 H^{(r)}_{N}(\bm{x})}_{op}
    \end{align*}
    Thus, for all $f\in\Lip_1(\mathbb{R})$,
    \begin{align}
        \abs{
        \int_\R f(y)\dd{\mu_{N,\bm{x}}(y)} -
        \int_\R f(y)\dd{\mu_{N,p,\bm{x}}(y)} 
        }
        \leq 
        \sum_{r=p+1}^\infty 
        \norm*{\nabla_E^2 H^{(r)}_{N}(\bm{x})}_{op}. \label{eq:sum H_p_0}
    \end{align}
    Combining \eqref{eq:sum H_p_0}, Proposition~\ref{prop:Lipschitz} and the union bound, 
    \begin{align}
        &\PR{
        \abs{
        \int_\R f(y)\dd{\mu_{N,\bm{x}}(y)} -
        \int_\R f(y)\dd{\mu_{N,p,\bm{x}}(y)} 
        } 
        \geq 
        \sum_{r=p+1}^\infty \gamma_r r(r-1)2C\sqrt{r}
        } 
        \nonumber \\
        &\leq 
        \sum_{r=p+1}^\infty \exp(-\frac{C^2r}{2K}N), \label{eq:sum H_p_0.1}
    \end{align}
    for $N$ sufficiently large. As the upper bound of \eqref{eq:sum H_p_0.1} does not depend on $\bm{x}$, we have
    \begin{align}
        &\sup_{\bm{x}\in\bm{B}^N}
        \PR{
        \abs{
        \int_\R f(y)\dd{\mu_{N,\bm{x}}(y)} -
        \int_\R f(y)\dd{\mu_{N,p,\bm{x}}(y)} 
        } 
        \geq 
        \sum_{r=p+1}^\infty \gamma_r r(r-1)2C\sqrt{r}
        }
        \nonumber \\
        &\leq 
        \sum_{r=p+1}^\infty \exp(-\frac{C^2r}{2K}N)
        \nonumber \\
        &\leq 
        \exp(-\frac{C^2(p+1)}{2K}N)
        \label{eq:sum H_p_0.2}
    \end{align}
    for $N$ sufficiently large.
    Define 
    \begin{align*}
        \delta_{p} = \sum_{r=p+1}^\infty \gamma_r r(r-1)2C\sqrt{r}
        \quad \text{and} \quad 
        c_p
        =
        \frac{C^2(p+1)}{2K}.
    \end{align*}
    Note that $\delta_{p}$ is non-increasing and $\delta_{p}\rightarrow 0$ as $p\rightarrow\infty$ and $c_p$ is non-decreasing and $c_p\rightarrow\infty$ as $p\rightarrow\infty$. Thus,
    for all $c>0$ and $f\in\Lip_1(\R)$, by taking $p_0\geq 2$ such that $c_{p_0}\geq c$ and $\delta=\delta_{p_0}$, \eqref{eq:hp approx} follows from \eqref{eq:sum H_p_0.2}.
\end{proof}

\begin{lemma}
\label{lem:concentration}
    For all $c>0$, $p\geq 2$ and $f:\Lip_1(\R)$, there exists $\delta>0$ such that
    \begin{align*}
        \sup_{\bm{x}\in\bm{B}^N}
        \PR{
        \abs{
        \int_\R f(y)\dd{\mu_{N,p,\bm{x}}(y)} - 
        \int_\R f(y)\dd{\bar{\mu}_{N,p,\bm{x}}(y)
        }
        }
        \geq \delta
        }
        \leq e^{-cN}.
    \end{align*}
\end{lemma}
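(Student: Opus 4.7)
The plan is to realize $\Phi(J^{\leq p}) := \int_\R f(y)\, d\mu_{N,p,\bm{x}}(y)$ as a Lipschitz function of the disorder $J^{\leq p} = (J^{(r)})_{r=2}^p$, regarded as a vector in $\R^{N^2+\cdots+N^p}$, and then invoke the Herbst argument together with the tensorization of the log-Sobolev inequality afforded by Condition~\hyperlink{ass:LS}{(LS)}. The Lipschitz constant of $\Phi$ will turn out to be of order $1/N$, so the LSI concentration will in fact be much stronger than required, and $\delta$ can be chosen independently of $N$.

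The key estimate is derived in two steps. First, the Hoffman--Wielandt inequality combined with Cauchy--Schwarz gives, for any symmetric $N\times N$ matrices $A,B$ with empirical spectral distributions $\mu_A,\mu_B$,
\begin{align*}
    \Big|\int_\R f\, d\mu_A - \int_\R f\, d\mu_B\Big| \leq \frac{|f|_\Lip}{\sqrt N}\, \|A-B\|_F.
\end{align*}
Applying this with $A,B$ the spherical Hessians at two disorders, and using that $\nabla^2 H_N^{\leq p}(\bm{x}) = P_{\bm x}^\intercal \nabla_E^2 H_N^{\leq p}(\bm{x}) P_{\bm x}$ together with the elementary $\|P^\intercal M P\|_F\leq \|P\|_{op}^2\|M\|_F=\|M\|_F$, reduces the task to bounding $\|\nabla_E^2 H_N^{\leq p}(\bm{x};J-J')\|_F$ by a multiple of $\|J-J'\|$. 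Second, using the symmetrized representation
$(\nabla_E^2 H_N^{(r)}(\bm{x}))_{jk} = \frac{r(r-1)}{N^{(r-1)/2}} \sum_{\bm{i}}J^{(r),\Sym}_{j,k,i_1,\ldots,i_{r-2}} x_{i_1}\cdots x_{i_{r-2}}$, a Cauchy--Schwarz over the contracted indices together with $\sum_{\bm{i}}(x_{i_1}\cdots x_{i_{r-2}})^2=\|\bm{x}\|^{2(r-2)}\leq N^{r-2}$ and $\|J^{(r),\Sym}\|\leq\|J^{(r)}\|$ yields $\|\nabla_E^2 H_N^{(r)}(\bm{x};J^{(r)})\|_F \leq \frac{r(r-1)}{\sqrt N}\|J^{(r)}\|$. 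Combining these steps, summing over $r\in\llbracket 2,p\rrbracket$ and applying Cauchy--Schwarz in $r$ yields, uniformly in $\bm{x}\in\bm{B}^N$,
\begin{align*}
|\Phi(J^{\leq p})-\Phi(J'^{\leq p})|\leq \frac{C(p,\xi)}{N}\,\|J^{\leq p}-J'^{\leq p}\|,
\qquad C(p,\xi)^2:=\sum_{r=2}^p \gamma_r^2 r^2(r-1)^2.
\end{align*}

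To conclude, Condition~\hyperlink{ass:LS}{(LS)} together with the tensorization property of the log-Sobolev inequality implies that the joint law of $J^{\leq p}$ satisfies an LSI with constant $K$. The Herbst argument (Proposition~2.3 in \cite{ledoux_concentration_1999}) therefore delivers
\begin{align*}
\sup_{\bm{x}\in\bm{B}^N}\PR{|\Phi(J^{\leq p})-\E\Phi(J^{\leq p})|\geq \delta}
\leq 2\exp\!\Big(-\frac{\delta^2 N^2}{2K\, C(p,\xi)^2}\Big),
\end{align*}
and taking $\delta = \sqrt{2KC(p,\xi)^2(c+\log 2)}$, independent of $N$ and $\bm{x}$, drives the right-hand side below $e^{-cN}$ for every $N\geq 1$; since $\E\Phi(J^{\leq p})=\int f\, d\bar\mu_{N,p,\bm{x}}$, the lemma follows. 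The only delicate point is extracting the sharp $O(1/N)$ Lipschitz constant: one factor of $N^{-1/2}$ comes from Hoffman--Wielandt and the second from the built-in scaling $N^{-(r-1)/2}$ in the definition of $H_N^{(r)}$ combined with $\|\bm{x}\|\leq\sqrt N$. Once this Lipschitz estimate is in hand, the concentration is immediate from tensorized LSI and the Herbst argument.
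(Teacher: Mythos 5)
Your proof is correct and follows essentially the same strategy as the paper's: realize $\int_\R f\,d\mu_{N,p,\bm{x}}$ as a Lipschitz function of the disorder vector $J^{\leq p}$, and then apply the concentration inequality for product measures satisfying a log-Sobolev inequality (Proposition~2.3 in Ledoux, via the Herbst argument), with tensorization giving the LSI for the joint law. The only departure is a technical sharpening in the Lipschitz estimate: you pass through Hoffman--Wielandt and a Frobenius-norm bound on $\nabla_E^2 H_N^{(r)}$ to obtain a Lipschitz constant of order $1/N$, hence concentration at rate $e^{-cN^2}$, whereas the paper bounds the average eigenvalue perturbation by Weyl's inequality and controls the operator norm of the Hessian difference, obtaining a Lipschitz constant of order $1/\sqrt{N}$ and the weaker (but still sufficient) rate $e^{-cN}$.
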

\begin{proof}
We define a function $F:\R^{N^2}\times\cdots\times \R^{N^{p_0}}\rightarrow\R$ by
    \begin{align*}
        F(J^{\leq p_0})
        =
        \int_\R f(y)\dd{\mu_{N,\bm{x},J^{\leq p_0}}(y)},
    \end{align*}
    where $\mu_{N,\bm{x},J^{\leq p_0}} = \sum_{i=1}^N \lambda_{N,\bm{x},J^{\leq p_0}}^{(i)}$ is the empirical spectral distribution of $\nabla^2 H_N^{(p_0)}(\bm{x};J^{\leq p_0})\coloneqq \sum_{p=2}^{p_0} H_N^{(p)}(\bm{x};J^{(p)})$.
    
    We have
    \begin{align}
        \abs*{F(J^{\leq p_0}) - F({J'}^{\leq p_0})}
        &\leq
        \sum_{i=1}^N
        \frac{1}{N}
        \abs*{\lambda_{N,\bm{x},J^{\leq p_0}}-\lambda_{N,\bm{x},{J'}^{\leq p_0}}}
        \nonumber \\
        &\leq 
        \norm*{\nabla^2 H^{\leq p_0}_{N}(\bm{x};J^{\leq p_0}) - \nabla^2 H^{\leq p_0}_{N}(\bm{x};{J'}^{\leq p_0})
        }_{op}
        \nonumber \\
        &\leq
        \sum_{p=2}^{p_0} 
        \gamma_p
        \norm*{\nabla^2_E H^{(p)}_{N}(\bm{x};J^{(p)}) - \nabla^2_E H^{(p)}_{N}(\bm{x};{J'}^{(p)})}_{op}.
         \label{eq:Hoffman--Wielandt}
    \end{align}
    where the last line above follows from the triangle inequality and the fact that $\norm{P_{\bm{x}}}_{op}=1$.
    
    For all $p\in\llbracket 2,p_0\rrbracket$ and $\bm{u}\in\R^N$ with $\norm{\bm{u}}=1$, we have
    \begin{align}
        &
        \langle
        (\nabla^2_E H^{(p)}_{N}(\bm{x};J^{(p)}) - \nabla^2_E H^{(p)}_{N}(\bm{x};{J'}^{(p)}))\bm{u},\bm{u}
        \rangle 
        \nonumber \\
        &=
        \frac{1}{N^{(p-1)/2}}
        \sum_{j,k=1}^N
        u_ju_k
        \sum_{i_1,\ldots,i_p=1}^N 
        (J_{i_1,\ldots,i_p}^{(p)} - {J'}_{i_1,\ldots,i_p}^{(p)})
        \partial_{j}\partial_{k}(x_{i_1}\cdots x_{i_p}) 
        \label{eq:Frobenius}
    \end{align}
    By the Cauchy--Schwarz inequality to \eqref{eq:Frobenius}, we have
    \begin{align}
        &
        \left(
        \sum_{i_1,\ldots,i_p=1}^N \left(J_{i_1,\ldots,i_p}^{(p)} - {J'}_{i_1,\ldots,i_{p}}^{(p)}\right) 
        \partial_{j}\partial_{k}(x_{i_1}\cdots x_{i_p})
        \right)^{2} \nonumber \\
        &\leq 
        \left(
        \sum_{i_1,\ldots,i_p=1}^N 
        \left(
        J_{i_1,\ldots,i_p}^{(p)} - {J'}_{i_1,\ldots,i_{p}}^{(p)}
        \right)^2
        \Ind_{j,k\in\{i_1,\ldots,i_p\}}
        \right)
        \sum_{i_1,\ldots,i_p=1}^N 
        \left(
        \partial_{j}\partial_{k}(x_{i_1}\cdots x_{i_p}) 
        \right)^2 \nonumber \\
        &\leq 
        \left(
        \sum_{i_1,\ldots,i_{p}=1}^N 
        \left(J_{i_1,\ldots,i_{p-2},j,k}^{(p)} - {J'}_{i_1,\ldots,i_{p}}^{(p)}
        \right)^2 
        \Ind_{j,k\in\{i_1,\ldots,i_p\}}
        \right)
        \left(
        p(p-1) \sum_{i_1,\ldots,i_{p-2}=1}^N x_{i_1}^2\cdots x_{i_{p-2}}^2
        \right) \nonumber \\
        &= 
        \left(
        \sum_{i_1,\ldots,i_p=1}^N 
        \left(J_{i_1,\ldots,i_p}^{(p)} - {J'}_{i_1,\ldots,i_{p}}^{(p)}
        \right)^2 
        \Ind_{j,k\in\{i_1,\ldots,i_p\}}
        \right)
        p(p-1)\norm{\bm{x}}^{2(p-2)}.
        \label{eq:Cauchy--Schwarz}
    \end{align}
    Plugging \eqref{eq:Cauchy--Schwarz} back to \eqref{eq:Frobenius}, we derive that 
    \begin{align}
        &\langle
        (\nabla^2_E H^{(p)}_{N}(\bm{x};J^{(p)}) - \nabla^2_E H^{(p)}_{N}(\bm{x};{J'}^{(p)}))\bm{u},\bm{u} 
        \rangle
        \nonumber \\
        &\leq 
        \frac{1}{N^{(p-1)/2}}
        (p(p-1))^{1/2}\norm{\bm{x}}^{p-2} 
        \sum_{j,k=1}^N
        u_ju_k
        \left(
        \sum_{i_1,\ldots,i_p=1}^N 
        \left(J_{i_1,\ldots,i_p}^{(p)} - {J'}_{i_1,\ldots,i_{p}}^{(p)}
        \right)^2 
        \Ind_{j,k\in\{i_1,\ldots,i_p\}}
        \right)^{1/2}
        \nonumber \\
        &\leq 
        \frac{1}{N^{(p-1)/2}}
        (p(p-1))\norm{\bm{x}}^{p-2} 
        \norm{\bm{u}}
        \left(
        \sum_{i_1,\ldots,i_p=1}^N 
        \left(J_{i_1,\ldots,i_p}^{(p)} - {J'}_{i_1,\ldots,i_{p}}^{(p)}
        \right)^2 
        \right)^{1/2}. \nonumber \\
        &\leq 
        \frac{1}{\sqrt{N}}
        (p(p-1))
        \left(
        \sum_{i_1,\ldots,i_p=1}^N 
        \left(J_{i_1,\ldots,i_p}^{(p)} - {J'}_{i_1,\ldots,i_{p}}^{(p)}
        \right)^2
        \right)^{1/2}
        , \nonumber 
    \end{align}
    and so
    \begin{align}
        \norm*{\nabla^2_E H^{(p)}_{N}(\bm{x};J^{(p)}) - \nabla^2_E H^{(p)}_{N}(\bm{x};{J'}^{(p)})}_{op}
        \leq
        \frac{1}{\sqrt{N}}
        (p(p-1))
        \left(
        \sum_{i_1,\ldots,i_p=1}^N 
        \left(J_{i_1,\ldots,i_p}^{(p)} - {J'}_{i_1,\ldots,i_{p}}^{(p)}
        \right)^2
        \right)^{1/2}.
        \label{eq:J difference conclusion_p}
    \end{align}
    Combining \eqref{eq:Hoffman--Wielandt} and \eqref{eq:J difference conclusion_p}, we obtain 
    \begin{align}
        &\abs*{F(J^{\leq p_0}) - F({J'}^{\leq p_0})}
        \nonumber \\
        &\leq 
        \frac{1}{\sqrt{N}}
        \sum_{p=2}^{p_0} \gamma_p p(p-1)
        \left(
        \sum_{i_1,\ldots,i_p=1}^N 
        \left(J_{i_1,\ldots,i_p}^{(p)} - {J'}_{i_1,\ldots,i_{p}}^{(p)}
        \right)^{1/2}
        \right)
        \nonumber \\
        &\leq
        \frac{1}{\sqrt{N}}
        \underbrace{
        \left(
        \sum_{p=2}^{\infty} (\gamma_p p(p-1))^2
        \right)^{1/2}
        }_{=C_\gamma}
        \left(
        \sum_{p=2}^{p_0}\sum_{i_1,\ldots,i_p=1}^N 
        \left(
        J_{i_1,\ldots,i_p}^{(p)} - {J'}_{i_1,\ldots,i_{p}}^{(p)}
        \right)^2
        \right)^{1/2} \nonumber \\
        &=
        \frac{1}{\sqrt{N}}
        C_\gamma
        \left(
        \sum_{p=2}^{p_0}\sum_{i_1,\ldots,i_p=1}^N 
        \left(
        J_{i_1,\ldots,i_p}^{(p)} - {J'}_{i_1,\ldots,i_{p}}^{(p)}
        \right)^2
        \right)^{1/2}
        \nonumber \\
        &=
        \frac{1}{\sqrt{N}}
        C_\gamma
        \norm*{J^{\leq p_0} - {J'}^{\leq p_0}}_{\R^{N^2}\times\cdots\times\R^{N^{p_0}}}, \label{eq:F Lip const}
    \end{align}
    where $\norm{\cdot}_{\R^{N^2}\times\cdots\times\R^{N^{p_0}}}$ is the Euclidean norm on $\R^{N^2}\times\cdots\times\R^{N^{p_0}}$.
    Therefore, by Condition~\hyperlink{ass:LS}{(LS)}, \eqref{eq:F Lip const} and Proposition~2.3~in~\cite{ledoux_concentration_1999}, we have
    \begin{align*}
        \PR{
        \abs{
        \int_\R f(y)\dd{\mu_{N,p,\bm{x}}(y)} - 
        \int_\R f(y)\dd{\bar{\mu}_{N,p,\bm{x}}(y)
        }
        }
        \geq \delta
        }
        \leq 2 \exp(-\frac{\delta^2}{2KC_\gamma^2} N),
    \end{align*}
    where $K$ appeared in Condition~\hyperlink{ass:LS}{(LS)} and \eqref{eq:F Lip const}.
\end{proof}

\subsection{Proof of Proposition~\ref{prop:sup edge control}}
\label{sec:proof sup edge control}
    Fix $c>0$, $\varepsilon>0$ and $\bm{x}\in\bm{B}^N$ and $\rho_{\bm{x}}=\norm{\bm{x}}/N$. Define $f:\R\rightarrow\R$ by
    \begin{align*}
        f(y)
        =
        \begin{cases}
            1, & y\in(-\infty,-2\xi''(\rho_{\bm{x}})^{1/2}+\frac{\varepsilon}{2}), \\
            -\frac{2}{\varepsilon}(y+2\xi''(\rho_{\bm{x}})^{1/2}-\frac{\varepsilon}{2})+1, & y\in [-2\xi''(\rho_{\bm{x}})^{1/2}+\frac{\varepsilon}{2},-2\xi''(\rho_{\bm{x}})^{1/2}+\varepsilon], \\
            0, & y\in (-2\xi''(\rho_{\bm{x}})^{1/2}+\varepsilon,\infty),
        \end{cases}
    \end{align*}
    Note that $f$ is bounded Lipschitz with $\abs*{f}_\Lip=2/\varepsilon$ and 
    \begin{align}
        \Ind_{(-\infty,-2\xi''(\rho_{\bm{x}})^{1/2}+\frac{\varepsilon}{2}]}
        \leq f
        \leq 
        \Ind_{(-\infty,-2\xi''(\rho_{\bm{x}})^{1/2}+\varepsilon]}. \label{eq:f sandwich}
    \end{align}
    Thus, \eqref{eq:f sandwich} implies
    \begin{align}
        \mu_{N,\bm{x}}((-\infty,-2\xi''(\rho_{\bm{x}})^{1/2}+\varepsilon])
        \geq
        \int_{\R} f(y)\dd{\mu_{N,\bm{x}}(y)}. \label{eq:1<=f}
    \end{align}
    Also, \eqref{eq:f sandwich} yields
    \begin{align}
        &\int_{\R} f(y)\dd{\bar{\mu}_{N,p,\bm{x}}(y)}
        \nonumber \\
        &=
        \int_{\R} f(y)\dd{\mu_{sc,2\xi''(\rho_{\bm{x}})^{1/2}}(y)} 
        \nonumber \\
        &-
        \int_{\R} f(y)\dd{\mu_{sc,2\xi''(\rho_{\bm{x}})^{1/2}}(y)}
        +
        \int_{\R} f(y)\dd{\mu_{sc,2\xi_p''(\rho_{\bm{x}})^{1/2}}(y)}
        \nonumber \\
        &-
        \int_{\R} f(y)\dd{\mu_{sc,2\xi_p''(\rho_{\bm{x}})^{1/2}}(y)}
        +
        \int_{\R} f(y)\dd{\bar{\mu}_{N,p,\bm{x}}(y)}
        \nonumber \\
        &\geq
        \mu_{sc,2\xi''(\rho_{\bm{x}})^{1/2}}
        \left(
        \left(-\infty,-2\xi''(\rho_{\bm{x}})^{1/2}+\frac{\varepsilon}{2}\right]
        \right) 
        \nonumber \\
        &-
        \frac{2}{\varepsilon}
        \norm*{
        \mu_{sc,2\xi''(\rho_{\bm{x}})^{1/2}}
        -
        \mu_{sc,2\xi_p''(\rho_{\bm{x}})^{1/2}}
        }_{LB}
        -
        \frac{2}{\varepsilon}
        \norm*{
        \mu_{sc,2\xi_p''(\rho_{\bm{x}})^{1/2}}
        -
        \bar{\mu}_{N,p,\bm{x}}
        }_{LB}.
        \label{eq:f d_mu low bnd}
    \end{align}
    As $\xi''(q)\leq \xi''(1)$ for all $q\in [0,1]$, we have
    \begin{align}
        \mu_{sc,2\xi''(\rho_{\bm{x}})^{1/2}}
        \left(
        \left(-\infty,-2\xi''(\rho_{\bm{x}})^{1/2}+\frac{\varepsilon}{2}\right]
        \right)
        &=
        \frac{1}{2\pi\xi''(\rho_{\bm{x}})}
        \int_{-2\xi''(\rho_{\bm{x}})^{1/2}}^{-2\xi''(\rho_{\bm{x}})^{1/2}+\frac{\varepsilon}{2}}
        \sqrt{4\xi''(q)-x^2}\dd{x}
        \nonumber \\
        &=
        \frac{1}{2\pi}
        \int_{-2}^{-2+\varepsilon/(2\xi''(\rho_{\bm{x}})^{1/2})}
        \sqrt{4-x^2}\dd{x}
        \nonumber \\
        &\geq
        \frac{1}{2\pi}
        \int_{-2}^{-2+\varepsilon/(2\xi''(1)^{1/2})}
        \sqrt{4-x^2}\dd{x} 
        \nonumber \\
        &= \delta_{\xi,\varepsilon} > 0. \label{eq:c_xi,eps}
    \end{align}

Next, we state and prove an elementary lemma to compare the semicircular laws with different radii.
\begin{lemma}
\label{lem:semicircular comparison}
For all $0< R_1 \leq R_2$, we have
\begin{align}
\norm*{\mu_{sc,R_1} - \mu_{sc,R_2}}_{LB} < 2\left(1-\frac{R_1^2}{R_2^2}\right).
\end{align}
\end{lemma}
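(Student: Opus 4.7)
The plan is to build an explicit mixture decomposition
\[
\mu_{sc,R_2} \;=\; \frac{R_1}{R_2}\,\mu_{sc,R_1} \;+\; \left(1 - \frac{R_1}{R_2}\right)\nu
\]
for a suitable probability measure $\nu$ on $\R$; once this is in place, the bounded--Lipschitz comparison follows with essentially no work. First I would verify the pointwise domination
\[
\frac{R_1}{R_2}\,\rho_{sc,R_1}(y) \;\leq\; \rho_{sc,R_2}(y), \qquad y \in \R,
\]
writing $\rho_{sc,R}$ for the density of $\mu_{sc,R}$. Outside $[-R_1,R_1]$ the inequality is trivial because $\rho_{sc,R_1}$ vanishes; inside, after clearing denominators, it reduces to $R_2^2(R_1^2-y^2) \leq R_1^2(R_2^2-y^2)$, i.e.\ $(R_2^2-R_1^2)\,y^2 \geq 0$, which holds. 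The weight $R_1/R_2$ is in fact the largest admissible one: the ratio $\rho_{sc,R_2}/\rho_{sc,R_1}$ on $(-R_1,R_1)$ is minimized at $y=0$, where it equals $R_1/R_2$.

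Given this pointwise bound, $\nu := (\mu_{sc,R_2} - (R_1/R_2)\,\mu_{sc,R_1})/(1-R_1/R_2)$ is a genuine probability measure on $\R$. For any $f \in \Lip_1(\R)$, linearity yields
\[
\int f\dd{\mu_{sc,R_1}}-\int f\dd{\mu_{sc,R_2}} \;=\; \left(1-\frac{R_1}{R_2}\right)\!\left(\int f\dd{\mu_{sc,R_1}}-\int f\dd{\nu}\right),
\]
and using that $f$ is bounded with $\|f\|_\infty \leq 1$ (built into the notion of $\Lip_1(\R)$ as a class of \emph{bounded} Lipschitz functions), the right--hand side has absolute value at most $2(1 - R_1/R_2)$. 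Since $R_1 \leq R_2$ forces $R_1/R_2 \geq R_1^2/R_2^2$, I conclude $\|\mu_{sc,R_1}-\mu_{sc,R_2}\|_{LB} \leq 2(1 - R_1/R_2) \leq 2(1 - R_1^2/R_2^2)$, with strict inequality as soon as $R_1 < R_2$.

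The only non-routine step is identifying $\alpha = R_1/R_2$ as the correct mixture weight, and this is essentially forced by matching the two densities at the origin, where $\rho_{sc,R_1}(0)/\rho_{sc,R_2}(0) = R_2/R_1$. Every other step is a direct calculation, so I do not anticipate any serious obstacle.
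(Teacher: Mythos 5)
Your proof is correct and takes a genuinely different route from the paper. The paper proceeds by a direct computation: it splits $\int g\,d\mu_{sc,R_1}-\int g\,d\mu_{sc,R_2}$ into a contribution over $[-R_1,R_1]$ and a contribution over $R_1\le|x|\le R_2$, bounds each using $\|g\|_\infty\le 1$ and the triangle inequality after an intermediate ``telescoping'' of the densities, and then evaluates the resulting integrals of $\sqrt{R^2-x^2}$ explicitly to land on $2(1-R_1^2/R_2^2)$. You instead observe the pointwise density domination $\frac{R_1}{R_2}\rho_{sc,R_1}\le\rho_{sc,R_2}$ (a two-line verification), extract the mixture representation $\mu_{sc,R_2}=\frac{R_1}{R_2}\mu_{sc,R_1}+(1-\frac{R_1}{R_2})\nu$, and then the comparison follows in one step from $\|f\|_\infty\le 1$. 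This not only avoids any explicit integral evaluation but actually gives a sharper intermediate bound $2(1-R_1/R_2)$, which you then relax to the lemma's stated $2(1-R_1^2/R_2^2)$ via $R_1/R_2\ge R_1^2/R_2^2$; this last inequality is strict precisely when $R_1<R_2$, so your argument also pinpoints exactly when the lemma's strict inequality holds (and that it fails trivially at $R_1=R_2$, a minor imprecision in the lemma statement that the paper's own proof shares, since it also only delivers $\le$). Both proofs rely on $\|f\|_\infty\le 1$ being part of the definition of $\Lip_1(\R)$; the paper writes ``bounded'' without the explicit constant and justifies the relevant step in its own proof with ``Lipschitz continuity'' rather than the sup-norm bound, so you are no worse off than the source here, but it is worth flagging that this normalization is being used.
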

\begin{proof}
Let $g\in\Lip_1(\R)$. By the definition of semicircular laws provided in Section~\ref{sec:notation},
\begin{align}
&\abs{
\int_\R g(x)\dd\mu_{sc,R_1}(x) - \int_\R g(x)\dd \mu_{sc,R_2}(x)
} \nonumber \\
&=
\abs{
\int_\R g(x)\left(\frac{2}{\pi R_1^2}\sqrt{R_1^2 - x^2}\Ind_{[-R_1,R_1]} 
- \frac{2}{\pi R_2^2}\sqrt{R_2^2 - x^2}\Ind_{[-R_2,R_2]}
\right)\dd{x}
} \nonumber \\
&\leq 
\bigg|
\int_{-R_1}^{R_1} 
g(x)
\left(\frac{2}{\pi R_1^2}\sqrt{R_1^2 - x^2}
- \frac{2}{\pi R_2^2}\sqrt{R_2^2 - x^2}
\right) \dd{x}
\bigg|
+
\bigg|
\int_{R_2\geq \abs{x}\geq R_1}
g(x) 
\frac{2}{\pi R_2^2}\sqrt{R_2^2 - x^2}
\dd{x}
\bigg|
\nonumber \\
&\eqqcolon
\text{\RomanNumeralCaps 1} + \text{\RomanNumeralCaps 2}. \label{eq:sc decomp}
\end{align}
By telescoping and the Lipschitz continuity of $g$, we have
\begin{align}
\text{\RomanNumeralCaps 1 in \eqref{eq:sc decomp}}
&\leq 
\abs{
\int_{-R_1}^{R_1} 
g(x)
\left(\frac{2}{\pi R_1^2}\sqrt{R_1^2 - x^2}
- \frac{2}{\pi R_2^2}\sqrt{R_1^2 - x^2}
\right) \dd{x}
} \nonumber \\
&+
\abs{
\int_{-R_1}^{R_1} 
g(x)
\left(\frac{2}{\pi R_2^2}\sqrt{R_1^2 - x^2}
- \frac{2}{\pi R_2^2}\sqrt{R_2^2 - x^2}
\right) \dd{x}
} \nonumber \\
&\leq
\left(\frac{2}{\pi R_1^2} - \frac{2}{\pi R_2^2}\right)
\int_{-R_1}^{R_1} 
\sqrt{R_1^2 - x^2} \dd{x}
+
\frac{2}{\pi R_2^2}
\int_{-R_1}^{R_1} 
\bigg(
\sqrt{R_2^2 - x^2} - \sqrt{R_1^2 - x^2}
\bigg)
\dd{x}.
\label{eq:sc decomp.1}
\end{align}
By the Lischitz continuity of $g$,
\begin{align}
\text{\RomanNumeralCaps 2 in \eqref{eq:sc decomp}}
\leq
\frac{2}{\pi R_2^2}
\int_{R_2\geq \abs{x}\geq R_1}
\sqrt{R_2^2 - x^2}
\dd{x}.
\label{eq:sc decomp.2}
\end{align}
Thus, plugging \eqref{eq:sc decomp.1} and \eqref{eq:sc decomp.2} back to \eqref{eq:sc decomp} yields
\begin{align}
\eqref{eq:sc decomp}
&\leq 
\int_{R_2\geq \abs{x}\geq R_1}
\sqrt{R_2^2 - x^2}
\dd{x}
\nonumber \\
&
\leq 
\left(\frac{2}{\pi R_1^2} - \frac{2}{\pi R_2^2}\right)
\cdot
\int_{R_1}^{R_1} \sqrt{R_1^2-x^2}\dd{x}
+
\frac{2}{\pi R_2^2} \left(
\int_{-R_2}^{R_2} \sqrt{R_2^2 - x^2} \dd{x} - \int_{-R_1}^{R_1} \sqrt{R_1^2 - x^2} \dd{x}
\right) \nonumber \\
&=
2 \left( 1- \frac{R_1^2}{R_2^2}\right), \nonumber
\end{align}
and this completes the proof.
\end{proof}    
    By Lemma~\ref{lem:semicircular comparison}, we there exists $p_1\geq 2$ such that 
    \begin{align}
    \norm*{
        \mu_{sc,2\xi''(\rho_{\bm{x}})^{1/2}}
        -
        \mu_{sc,2\xi_p''(\rho_{\bm{x}})^{1/2}}
        }_{LB}
        < 
        \frac{\varepsilon\delta_{\xi,\varepsilon}}{8}, \label{eq:LB1}
\end{align}     
for $p\geq p_1$. Applying \eqref{eq:uniform mu bar} in Proposition~\ref{prop:asymptotic freeness} yields that there exists $N_0=N_0(p_1,\xi)$ such that for $N\geq N_0$, such that 
\begin{align}
        \norm*{
        \mu_{sc,2\xi_p''(\rho_{\bm{x}})^{1/2}}
        -
        \bar{\mu}_{N,p,\bm{x}}
        }_{LB}
        <
        \frac{\varepsilon\delta_{\xi,\varepsilon}}{8} \label{eq:LB2}
\end{align}
Thus, applying \eqref{eq:LB1} and \eqref{eq:LB2} to \eqref{eq:f d_mu low bnd} yields that for $p\geq p_1$ and $N\geq N_0$,
    \begin{align}
    \int_{\R} f(y)\dd{\bar{\mu}_{N,p_1,\bm{x}}(y)}
    \geq \frac{\delta_{\xi,\varepsilon}}{2}.
    \label{eq:mean upp bnd}
    \end{align}

    Take $c'>c$ such that $2e^{-c'N}\leq e^{-cN}$ for $N$ sufficiently large. By Lemma~\ref{lem:hp approx} and Lemma~\ref{lem:concentration}, there exist $p_2\geq 2$ and $\delta_1>0$ such that for all $p\geq p_2$ and $\delta<\delta_1$,
    \begin{align}
        \sup_{\bm{x}\in\bm{B}^N}
        \PR{
        \abs{
        \int_\R f(y)\dd{\mu_{N,\bm{x}}(y)} -
        \int_\R f(y)\dd{\mu_{N,p,\bm{x}}(y)} 
        }
        \geq \delta
        }
        &\leq e^{-c'N}, 
        \label{eq:P1} \\
        \sup_{\bm{x}\in\bm{B}^N}
        \PR{
        \abs{
        \int_\R f(y)\dd{\mu_{N,p,\bm{x}}(y)} - 
        \int_\R f(y)\dd{\bar{\mu}_{N,p,\bm{x}}(y)
        }
        }
        \geq \delta
        }
        &\leq e^{-c'N}.
        \label{eq:P2}
    \end{align}
    On the other hand, on the event that
    \begin{align}
        \abs{
        \int_\R f(y)\dd{\mu_{N,\bm{x}}(y)} -
        \int_\R f(y)\dd{\mu_{N,p,\bm{x}}(y)} 
        }
        &\leq \delta,
        \nonumber \\
        \abs{
        \int_\R f(y)\dd{\mu_{N,p,\bm{x}}(y)} - 
        \int_\R f(y)\dd{\bar{\mu}_{N,p,\bm{x}}(y)
        }
        }
        &\leq \delta, \nonumber 
    \end{align}
    \eqref{eq:mean upp bnd} yields
    \begin{align}
        \int_{\R} f(y)\dd{\mu_{N,\bm{x}}(y)}
        &\geq 
        \int_{\R} f(y)\dd{\bar{\mu}_{N,p_1,\bm{x}}(y)} 
        \nonumber \\
        &- \abs{
        \int_\R f(y)\dd{\mu_{N,\bm{x}}(y)} -
        \int_\R f(y)\dd{\mu_{N,p,\bm{x}}(y)} 
        }
        \nonumber \\
        &- \abs{
        \int_\R f(y)\dd{\mu_{N,p,\bm{x}}(y)} - 
        \int_\R f(y)\dd{\bar{\mu}_{N,p,\bm{x}}(y)
        }
        }
        \nonumber \\
        &
        \geq \frac{\delta_{\xi,\varepsilon}}{2}-2\delta. \label{eq:f d_mu low bnd.2}
    \end{align}
    Thus, by taking $\delta_0=\min\{\frac{\delta_{\xi,\varepsilon}}{8},\delta_1\}$ and $p_0=\max\{p_1,p_2\}$,
    \eqref{eq:P1}, \eqref{eq:P2} and \eqref{eq:f d_mu low bnd.2} yield that for any $\delta<\delta_0$ and $p\geq p_0$,
    \begin{align*}
        &\sup_{\bm{x}\in\bm{B}^N}
        \PR{\mu_{N,\bm{x}}((-\infty,-2\xi''(\rho_{\bm{x}})^{1/2}+\varepsilon])\leq \delta} \\
        &\leq 
        \sup_{\bm{x}\in\bm{B}^N}
        \PR{
        \int_{\R} f(y)\dd{\mu_{N,\bm{x}}(y)}\leq \delta
        } \\
        &\leq 
        \sup_{\bm{x}\in\bm{B}^N}
        \PR{
        \abs{
        \int_\R f(y)\dd{\mu_{N,\bm{x}}(y)} -
        \int_\R f(y)\dd{\mu_{N,p,\bm{x}}(y)} 
        }
        \geq \delta
        } \\
        &+
        \sup_{\bm{x}\in\bm{B}^N}
        \PR{
        \abs{
        \int_\R f(y)\dd{\mu_{N,p,\bm{x}}(y)} - 
        \int_\R f(y)\dd{\bar{\mu}_{N,p,\bm{x}}(y)
        }
        }
        \geq \delta
        } \\
        &\leq 2e^{-c'N} \\
        &\leq e^{-cN},
    \end{align*}
    for $N$ sufficiently large.

\section{Proof of Theorem~\ref{thm:edge}} \label{sec:main proof} 
By \eqref{eq:Hessian Lip}~of~Corollary~\ref{prop:Lipschitz} and the fact that 
\begin{align*}
    \norm*{\nabla^2 H_N(\bm{x})-\nabla^2 H_N(\bm{y})}_{op}
    \leq \norm*{\nabla^2_E H_N(\bm{x})-\nabla^2_E H_N(\bm{y})}_{op}, \quad \bm{x},\bm{y}\in\bm{B}^N,
\end{align*}
there exist $c_1>0$ and $R>0$ such that
\begin{align}
    &\PR
    {
    \exists \bm{x}\in\bm{B}^N : 
    \mu_{N,\bm{x}}((-\infty,-2\xi''(\rho_{\bm{x}})^{1/2}+\varepsilon])
    \leq \delta
    } \nonumber \\
    &\leq 
    \PP
    \Big(
    \Big\{\exists \bm{x}\in\bm{B}^N : 
    \mu_{N,\bm{x}}((-\infty,-2\xi''(\rho_{\bm{x}})^{1/2}+\varepsilon])
    \leq \delta
    \Big\} \nonumber \\
    &\quad\quad\quad \cap
    \Big\{
    \forall \bm{x},\bm{y}\in \bm{B}^N : 
    \norm*{\nabla^2 H_N(\bm{x})-\nabla^2 H_N(\bm{y})}_{op} 
    < \frac{R}{\sqrt{N}} \norm{\bm{x}-\bm{y}}
    \Big\}
    \Big)
    + e^{-c_1 N}, \label{eq:Lipschitz bnd}
\end{align}
where $\rho_{\bm{x}}=\norm{\bm{x}}/N$ in the first line above.
Thus, it remains to show that the first term of \eqref{eq:Lipschitz bnd} decays exponentially to zero, which is done by the following discretization argument.

Let $\bm{T}_N$ be a $(\varepsilon\sqrt{N}/2R)$-net
of $\bm{B}^N$. On the event 
\begin{align*}
    \Big\{
    \forall \bm{x},\bm{y}\in \bm{B}^N : 
    \norm*{\nabla^2 H_N(\bm{x})-\nabla^2 H_N(\bm{y})}_{op} 
    < \frac{R}{\sqrt{N}} \norm{\bm{x}-\bm{y}}
    \Big\},
\end{align*}
if there exists $\bm{x}\in\bm{B}^N$ such that
\begin{align*}
    \mu_{N,\bm{x}}((-\infty,-2\xi''(\rho_{\bm{x}})^{1/2}+\varepsilon])\leq \delta,
\end{align*}
then there exists $\bm{y}\in \bm{T}_N$ such that 
\begin{align*}
    \mu_{N,\bm{y}}((-\infty,-2\xi''(\rho_{\bm{y}})^{1/2}+2\varepsilon])\leq \delta.
\end{align*}

Therefore, by the discretization argument provided in the previous paragraph,
the first term of \eqref{eq:Lipschitz bnd} is bounded from above by
\begin{align}
    &\PP
    \Big(
    \exists \bm{y}\in \bm{T}_N : 
    \mu_{N,\bm{y}}((-\infty,-2\xi''(\rho_{\bm{y}})^{1/2}+2\varepsilon])\leq \delta
    \Big)
    \nonumber \\
    &\leq 
    \sum_{\bm{y}\in \bm{T}_N} 
    \PP
    \Big(
    \mu_{N,\bm{y}}((-\infty,-2\xi''(\rho_{\bm{y}})^{1/2}+2\varepsilon])\leq \delta
    \Big) 
    \label{eq:uni bound} \\
    &\leq \abs{\bm{T}_N}\times e^{-c_2 N} \label{eq:uni bound.2} \\
    &\leq \max\left
    \{
    \left(\frac{6R}{\varepsilon}\right)^N,1
    \right\} 
    e^{-c_2 N}, \label{eq:uni bound.3}
\end{align}
    where \eqref{eq:uni bound} follows from the uniform bound, \eqref{eq:uni bound.2} follows from Proposition~\ref{prop:sup edge control} 
    and \eqref{eq:uni bound.3} is a standard estimate (cf. Corollary~4.2.13~in~\cite{vershynin_high-dimensional_2018} for a proof).
    Since $\bm{T}_N$ is arbitrary chosen, by taking $c_2 > \log(10R/\varepsilon)$, \eqref{eq:uni bound.3} yields that the first term is bounded above by $0.6^N$, and the proof is completed.

\paragraph{Acknowledgments.}
I gratefully acknowledge support from Eliran Subag’s research grants: the Israel Science Foundation (grant No. 2055/21) and the European Research Council (ERC) under the European Union’s Horizon Europe research and innovation programme (grant agreement No. 101165541). I also thank him for suggesting the project and for numerous insightful discussions. I am also grateful to Mireille Capitaine, Guillaume C\'{e}bron, and Ofer Zeitouni for stimulating discussions. Finally, I appreciate 
Anouar Kouraich for his comments on the first version of this paper on arXiv.

\providecommand{\bysame}{\leavevmode\hbox to3em{\hrulefill}\thinspace}
\providecommand{\MR}{\relax\ifhmode\unskip\space\fi MR }
\providecommand{\MRhref}[2]{%
  \href{http://www.ams.org/mathscinet-getitem?mr=#1}{#2}
}
\providecommand{\href}[2]{#2}

\end{document}